\newcommand{\msc}[2][2000]{%
  \let\@oldtitle\@title%
  \gdef\@title{\@oldtitle\footnotetext{#1 \emph{Mathematics subject
        classification.} #2}}%
}
\theoremstyle{plain}
\newtheorem{theorem}{Theorem}[section]
\newtheorem{definition}[theorem]{Definition}
\newtheorem{assumption}[theorem]{Assumption}
\newtheorem{lemma}[theorem]{Lemma}
\newtheorem{corollary}[theorem]{Corollary}
\newtheorem{proposition}[theorem]{Proposition}
\newtheorem{hyp}[theorem]{Assumption}
\theoremstyle{remark}
\newtheorem{remark}[theorem]{Remark}
\def\C{{\mathbb C}}
\def\R{{\mathbb R}}
\def\N{{\mathbb N}}
\def\Sch{{\mathcal S}}
\def\O{\mathcal O}
\def\({\left(}
\def\){\right)}
\def\<{\left\langle}
\def\>{\right\rangle}
\def\le{\leqslant}
\def\ge{\geqslant}
\def\Eq#1#2{\mathop{\sim}\limits_{#1\rightarrow#2}}
\def\Tend#1#2{\mathop{\longrightarrow}\limits_{#1\rightarrow#2}}
\def\d{{\partial}}
\def\eps{\varepsilon}
\def\l{\lambda}
\def\si{{\sigma}}
\DeclareMathOperator{\RE}{Re}
\DeclareMathOperator{\IM}{Im}
\numberwithin{equation}{section}
\begin{document}

\title[Semi-classical nonlinear quantum scattering]{On semi-classical
  limit of nonlinear quantum scattering}

\author[R. Carles]{R\'emi Carles}
\address{CNRS \& Univ. Montpellier\\Math\'ematiques
\\CC~051\\34095 Montpellier\\ France}
\email{Remi.Carles@math.cnrs.fr}

\begin{abstract}
We consider the nonlinear Schr\"odinger equation with a short-range external
potential, in a semi-classical scaling. We show that for fixed Planck
constant, a complete scattering theory is available, showing that both
the potential and the nonlinearity are asymptotically negligible for
large time. Then, for data under the form of coherent
state, we show that a scattering 
theory is also available for the approximate envelope of the
propagated coherent state, which is given by a nonlinear
equation. In the semi-classical limit, these two scattering
operators can be compared in terms of classical scattering theory,
thanks to a uniform in time error estimate. Finally, we infer a large
time decoupling phenomenon in the case of finitely many initial coherent
states. 
\end{abstract}
\thanks{This work was supported by the French ANR projects
  SchEq (ANR-12-JS01-0005-01) and BECASIM
  (ANR-12-MONU-0007-04).} 
\maketitle

\section{Introduction}
\label{sec:intro}

We consider the equation
\begin{equation}
  \label{eq:1}
  i\eps\d_t \psi^\eps +\frac{\eps^2}{2}\Delta \psi^\eps =
  V(x)\psi^\eps + |\psi^\eps|^2 \psi^\eps,\quad (t,x)\in \R\times \R^3,
\end{equation}
and both semi-classical ($\eps\to 0$) and large time ($t\to \pm
\infty$) limits. Of course these limits must not be expected to
commute, and one of the goals of this paper is to analyze this lack of
commutation on specific asymptotic data, under the form of coherent
states as described below. Even though our main  result
(Theorem~\ref{theo:cv}) is proven specifically for the above case of a
cubic three-dimensional equation, two important intermediate results
(Theorems~\ref{theo:scatt-quant} and \ref{theo:scatt-class}) are
established in a more general setting. Unless specified otherwise, we
shall from now on consider $\psi^\eps:\R_t \times \R^d_x\to \C$, $d\ge
1$.

\subsection{Propagation of initial coherent states}
\label{sec:prop-init-coher}

In this subsection, we consider the initial value problem, as opposed
to the scattering problem treated throughout this paper. More
precisely, we assume here that the wave function is, at time $t=0$,
given by the coherent state
\begin{equation}
  \label{eq:ci}
  \psi^\eps(0,x) = \frac{1}{\eps^{d/4}}a\(\frac{x-q_0}{\sqrt\eps}\)
  e^{ip_0\cdot (x-q_0)/\eps},
\end{equation}
where $q_0,p_0\in \R^d$ denote the initial position and velocity,
respectively. The function $a$ belongs to the Schwartz class,
typically. In the case where $a$ is a (complex) Gaussian, many
explicit computations are available in the linear case (see
\cite{Hag80}). Note that the $L^2$-norm of $\psi^\eps$ is independent
of $\eps$, $\|\psi^\eps(t,\cdot)\|_{L^2(\R^d)} =\|a\|_{L^2(\R^d)}$. 

 Throughout this subsection, we assume that the external
potential $V$ is smooth and real-valued, $V\in C^\infty(\R^d;\R)$, and
at most quadratic, in the sense that 
\begin{equation*}
  \d^\alpha V\in L^\infty(\R^d),\quad \forall |\alpha|\ge 2.
\end{equation*}
This assumption will be strengthened when large time behavior is
analyzed. 
\subsubsection{Linear case}
\label{sec:linear-case}
 Resume \eqref{eq:1} in the absence of nonlinear term:
\begin{equation}
  \label{eq:lin}
  i\eps\d_t \psi^\eps +\frac{\eps^2}{2}\Delta \psi^\eps =
  V(x)\psi^\eps,\quad x\in \R^d,
\end{equation}
associated with the initial datum \eqref{eq:ci}. To derive an
approximate solution, and to describe the propagation of the initial
wave packet, introduce the Hamiltonian flow
\begin{equation}
  \label{eq:hamil}
  \dot q(t)= p(t),\quad \dot p(t)=-\nabla V\(q(t)\),
\end{equation}
and prescribe the initial data $q(0)=q_0$, $p(0)=p_0$. Since the
potential $V$ is smooth and at most quadratic, the solution
$(q(t),p(t))$ is smooth, defined for all time, and grows at most
exponentially.
The classical action is given by
\begin{equation}\label{eq:action}
S(t)=\int_0^t \left( \frac{1}{2} |p(s)|^2-V(q(s))\right)\,ds.
\end{equation}
We observe that if we change the unknown function $\psi^\eps$ to
$u^\eps$ by
\begin{equation}
  \label{eq:chginc}
  \psi^\eps(t,x)=\eps^{-d/4} u^\eps 
\left(t,\frac{x-q(t)}{\sqrt\eps}\right)e^{i\left(S(t)+p(t)\cdot
    (x-q(t))\right)/\eps},
\end{equation}
then, in terms of $u^\eps=u^\eps(t,y)$, the Cauchy problem
\eqref{eq:lin}--\eqref{eq:ci} is equivalent to 
\begin{equation}\label{eq:ueps0}
i\d_t
u^\eps+\frac{1}{2}\Delta u^\eps=V^\eps(t,y)
u^\eps\quad ;\quad u^\eps(0,y) = a(y),
\end{equation}
where the external time-dependent potential $V^\eps$ is given by
\begin{equation}
  \label{eq:Veps}
  V^\eps(t,y)= \frac{1}{\eps}\(V(x(t)+
\sqrt{\eps}y)-V(x(t))-\sqrt{\eps}\<\nabla V(x(t)),y\>\).
\end{equation}
This potential corresponds to the first term of a Taylor expansion of
$V$ about the point $q(t)$, and we naturally introduce 
$u=u(t,y)$ solution to 
\begin{equation}\label{eq:ulin}
i\d_tu+\frac{1}{2}\Delta u=\frac{1}{2}\< Q(t)y,y\> u\quad
;\quad u(0,y)=a(y),
\end{equation}
where
\begin{equation*}
  Q(t):= \nabla^2 V\(q(t)\), \quad \text{so that } \frac{1}{2}\<
  Q(t)y,y\>  = \lim_{\eps \to 0} V^\eps(t,y). 
\end{equation*}
The obvious candidate to approximate the initial wave function
$\psi^\eps$ is then:
\begin{equation}
  \label{eq:phi}
  \varphi^\eps(t,x)=\eps^{-d/4} u
\left(t,\frac{x-q(t)}{\sqrt\eps}\right)e^{i\left(S(t)+p(t)\cdot
    (x-q(t))\right)/\eps}.
\end{equation}
Indeed, it can be proven (see
e.g. \cite{BGP99,BR02,CoRoBook,Hag80,HaJo00,HaJo01}) that there
exists 
$C>0$ independent of $\eps$ such that
\begin{equation*}
  \|\psi^\eps(t,\cdot)-\varphi^\eps (t,\cdot)\|_{L^2(\R^d)}\le
  C\sqrt\eps e^{Ct}. 
\end{equation*}
Therefore, $\varphi^\eps$ is a good approximation of $\psi^\eps$ at least up to time
of order $c\ln\frac{1}{\eps}$ (Ehrenfest time). 

\subsubsection{Nonlinear case}
\label{sec:nonlinear}
 When adding a nonlinear term to \eqref{eq:lin}, one has to be
 cautious about the size of the solution, which rules the importance
 of the nonlinear term. To simplify the discussions, we restrict our
 analysis to the case of a gauge invariant, defocusing, power nonlinearity,
 $|\psi^\eps|^{2\si}\psi^\eps$. We choose to measure the importance of
 nonlinear effects not directly through the size of the initial data,
 but through an $\eps$-dependent coupling factor: we keep the initial
 datum \eqref{eq:ci} (with an $L^2$-norm independent of $\eps$), and
 consider
 \begin{equation*}
   i\eps\d_t \psi^\eps + \frac{\eps^2}{2}\Delta \psi^\eps =
   V(x)\psi^\eps + \eps^\alpha|\psi^\eps|^{2\si}\psi^\eps.
 \end{equation*}
Since the nonlinearity is homogeneous, this approach is equivalent to
considering $\alpha=0$, up to multiplying the initial datum by
$\eps^{\alpha/(2\si)}$. 
We assume $\si>0$, with $\si<2/(d-2)$ if $d\ge 3$: for $a\in \Sigma$,
defined by
\begin{equation*}
  \Sigma = \{f\in H^1(\R^d),\quad x\mapsto \<x\> f(x)\in
  L^2(\R^d)\},\quad \<x\>=\(1+|x|^2\)^{1/2},
\end{equation*}
we have, for fixed $\eps>0$, $\psi^\eps_{\mid t=0}\in \Sigma$, and the
Cauchy problem is globally well-posed, $\psi^\eps\in C(\R_t;\Sigma)$
(see e.g. \cite{Ca11}). It was established in 
\cite{CaFe11} that the value 
\begin{equation*}
  \alpha_c = 1+\frac{d\si}{2}
\end{equation*}
is critical in terms of the effect of the nonlinearity in the
semi-classical limit $\eps\to 0$. If $\alpha>\alpha_c$, then 
$\varphi_{\rm lin}^\eps$, given by \eqref{eq:ulin}-\eqref{eq:phi},  is
still a good approximation of $\psi^\eps$ at least up to time
of order $c\ln\frac{1}{\eps}$. On the other hand, if
$\alpha=\alpha_c$, nonlinear effects alter the behavior of $\psi^\eps$
at leading order, through its envelope only. Replacing \eqref{eq:ulin}
by 
\begin{equation}\label{eq:u}
i\d_tu+\frac{1}{2}\Delta u=\frac{1}{2}\< Q(t)y,y\> u+|u|^{2\si}u,
\end{equation}
and keeping the relation \eqref{eq:phi}, $\varphi^\eps$ is now a good
approximation of $\psi^\eps$. In \cite{CaFe11} though, the time of
validity of the approximation is not always proven to be of order at
least $c\ln\frac{1}{\eps}$, sometimes shorter time scales (of the
order $c\ln\ln\frac{1}{\eps}$) have to be considered, most likely for
technical reasons only. Some of these restrictions have been removed in
\cite{Ha13}, by considering decaying external
potentials $V$.

\subsection{Linear scattering theory and coherent states}
\label{sec:line-scatt-theory}

We now consider the aspect of large time, and instead of prescribing
$\psi^\eps$ at $t=0$ (or more generally at some finite time), we
impose its behavior at $t=-\infty$.
 In the linear case \eqref{eq:lin},
there are several results addressing the question mentioned above,
considering different forms of asymptotic states at $t=-\infty$.
Before describing them, we recall important facts concerning quantum
and classical scattering. 

\subsubsection{Quantum scattering}
\label{sec:quantum-scattering}

Throughout this paper, we assume that the external potential is
short-range, and satisfies the following properties:
\begin{hyp}\label{hyp:V}
  We suppose that $V$ is smooth and real-valued, $V\in
  C^\infty(\R^d;\R)$. In addition, it is short range in the following
  sense: there exists $\mu>1$ such that
  \begin{equation}
    \label{eq:short}
    |\d^\alpha V(x)|\le \frac{C_\alpha}{(1+|x|)^{\mu+|\alpha|}},\quad
   \forall \alpha\in \N^d. 
  \end{equation}
\end{hyp}
Our final result is established under the stronger condition
$\mu>2$ (a condition which is needed in several steps of the proof), but
some results are established under the mere assumption
$\mu>1$. Essentially, the analysis of the approximate solution is valid
for $\mu>1$
(see Section~\ref{sec:class}), while the rest of the analysis requires $\mu>2$. 
\smallbreak

Denote by 
\begin{equation*}
H_0^\eps= -\frac{\eps^2}{2}\Delta\quad \text{and}\quad
H^\eps=-\frac{\eps^2}{2}\Delta+V(x) 
\end{equation*}
the underlying Hamiltonians. For fixed $\eps>0$, the (linear) wave
operators are given by
\begin{equation*}
  W_\pm^\eps = \lim_{t\to \pm \infty}e^{i\frac{t}{\eps}H^\eps}e^{-i\frac{t}{\eps}H^\eps_0},
\end{equation*}
and the (quantum) scattering operator is defined by
\begin{equation*}
  S^\eps_{\rm lin} = \(W_+^\eps\)^* W_-^\eps. 
\end{equation*}
See for instance \cite{DG}.

\subsubsection{Classical scattering}
\label{sec:classical-scattering}

Let $V$ satisfying Assumption~\ref{hyp:V}. 
For $(q^-,p^-)\in \R^d\times \R^d$, we consider the classical
trajectories  $(q(t),p(t))$ defined by \eqref{eq:hamil}, 
along with the prescribed asymptotic behavior as $t\to -\infty$:
\begin{equation}
  \label{eq:CI-hamilton}
  \lim_{t\to -\infty}\left| q(t)-p^- t -q^-\right| = \lim_{t\to
    -\infty} |p(t)-p^-|=0. 
\end{equation}
The existence and uniqueness of such a trajectory can be found in
e.g. \cite{DG,ReedSimon3}, provided that $p^-\not =0$. Moreover, there
exists a closed set $\mathcal N_0$ of Lebesgue measure zero in
$\R^{2d}$ such that for all $(q^-,p^-)\in \R^{2d}\setminus \mathcal
N_0$, there exists $(q^+,p^+)\in \R^d\times
  \(\R^d\setminus\{0\}\)$ such that
\begin{equation*}
  \lim_{t\to +\infty}\left| q(t)-p^+ t -q^+\right| = \lim_{t\to
    +\infty} |p(t)-p^+|=0. 
\end{equation*}
The classical scattering operator is $S^{\rm cl}:(q^-,p^-)\mapsto
(q^+,p^+)$. Choosing  $(q^-,p^-)\in \R^{2d}\setminus \mathcal
N_0$ implies that the following assumption is satisfied:
\begin{assumption}\label{hyp:flot}
  The asymptotic center in phase space, $(q^-,p^-)\in \R^d\times
  \(\R^d\setminus\{0\}\)$ is such that the classical scattering
  operator is well-defined, 
  \begin{equation*}
    S^{\rm cl}(q^-,p^-)=
(q^+,p^+),\quad p^+\not =0,
  \end{equation*}
and the classical action has  limits as $t\to \pm\infty$:
\begin{equation*}
  \lim_{t\to -\infty}\left|S(t)-t\frac{|p^-|^2}{2}\right| =
  \lim_{t\to +\infty}\left|S(t)-t\frac{|p^+|^2}{2}-S_+\right|  =0,
\end{equation*}
for some $S_+\in \R$. 
\end{assumption}
 
\subsubsection{Some previous results}
\label{sec:some-prev-results}

It seems that the first mathematical result involving both the
semi-classical and large time limits appears in
\cite{GV79mean}, where the classical field limit of non-relativistic
many-boson theories is studied in space dimension $d\ge 3$. 

In
\cite{Yajima79}, the
case of a short range potential (Assumption~\ref{hyp:V}) is
considered, with asymptotic states 
under the form of semi-classically concentrated functions,
\begin{equation*}
  e^{-i\frac{\eps t}{2}\Delta}\psi^\eps(t)_{\mid t =-\infty}
  =\frac{1}{\eps^{d/2}}\widehat f\(\frac{x-q^-}{\eps}\),\quad f\in L^2(\R^d),
\end{equation*}
where $\widehat f$ denotes the standard Fourier transform (whose
definition is independent of $\eps$). The main result from
\cite{Yajima79} shows that the semi-classical limit for $S^\eps_{\rm lin}$
can be expressed in terms of the classical scattering operator, of the
classical action, and of
the Maslov index associated to each classical trajectory. We refer to
\cite{Yajima79} for a precise statement, and to \cite{Yaj81} for the
case of long range potentials, requiring modifications of the
dynamics. 
\smallbreak

In \cite{Hag81,HaJo00}, coherent  states are considered,
\begin{equation}
  \label{eq:asym-state}
 e^{-i\frac{\eps t}{2}\Delta}\psi^\eps(t)_{\mid t =-\infty}=
  \frac{1}{\eps^{d/4}}u_-\(\frac{x-q^-}{\sqrt\eps}\) 
  e^{ip^-\cdot (x-q^-)/\eps+iq^-\cdot p^-/(2\eps)}=:\psi_-^\eps(x).
\end{equation}
More precisely, in \cite{Hag81,HaJo00}, the asymptotic state $u_-$ is assumed
to be a complex Gaussian function. Introduce the notation
\begin{equation*}
  \delta(t) = S(t)-\frac{q(t)\cdot p(t) - q^-\cdot p^-}{2}.
\end{equation*}
Then Assumption~\ref{hyp:flot} implies that there exists $\delta^+\in
\R$ such that
\begin{equation*}
  \delta(t)\Tend t {-\infty}0\quad \text{and}\quad \delta(t)\Tend t
  {+\infty} \delta^+. 
\end{equation*}
 In 
\cite{CoRoBook,HaJo00}, we find the following general result  (an asymptotic
expansion in powers of $\sqrt\eps$ is actually given, but we stick to
the first term to ease the presentation):
\begin{theorem}\label{theo:version-lineaire}
  Let Assumptions~\ref{hyp:V} and \ref{hyp:flot} be satisfied, and let
  \begin{equation*}
    u_-(y) = a_- \exp \(\frac{i}{2}\<\Gamma_-y,y\>\),
  \end{equation*}
where $a_-\in \C$ and $\Gamma_-$ is a complex symmetric $d\times d$
matrix whose 
imaginary part is positive and non-degenerate. Consider $\psi^\eps$
solution to \eqref{eq:lin}, with \eqref{eq:asym-state}. Then the
following asymptotic expansion holds in $L^2(\R^d)$:
\begin{equation*}
  S^\eps_{\rm lin} \psi_-^\eps = \frac{1}{\eps^{d/4}}e^{i\delta^+/\eps} e^{ip^+\cdot
    (x-q^+)/\eps+iq^+\cdot p^+/(2\eps)} \hat R(G_+)
  u_-\(\frac{x-q^+}{\sqrt\eps}\) +\O(\sqrt\eps),
\end{equation*}
where $\hat R(G_+)$ is the metaplectic transformation associated to
$G_+ = \frac{\d (q^+,p^+)}{\d(q^-,p^-)}$. 
\end{theorem}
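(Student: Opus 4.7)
The plan is to reduce the scattering problem for $\psi^\eps$ to an envelope problem along the classical trajectory $(q(t),p(t))$ from Assumption~\ref{hyp:flot}, compare the envelope with the quadratic-Hamiltonian reference \eqref{eq:ulin} (which propagates Gaussians to Gaussians via the metaplectic representation), and match the Weyl-symmetric coherent-state asymptotics at $t=+\infty$.

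The first step is to apply the ansatz \eqref{eq:chginc} along the scattering trajectory: write $\psi^\eps(t,x)=\eps^{-d/4}u^\eps(t,(x-q(t))/\sqrt\eps)\,e^{i(S(t)+p(t)\cdot(x-q(t)))/\eps}$, with $u^\eps$ solving \eqref{eq:ueps0}--\eqref{eq:Veps}. Using $(q(t),p(t),S(t))\sim(q^-+p^-t,p^-,t|p^-|^2/2)$ at $-\infty$ and the symmetric form of $\psi_-^\eps$ in \eqref{eq:asym-state}, a direct computation shows that the wave-operator condition $\|e^{-itH^\eps/\eps}W_-^\eps\psi_-^\eps-e^{-itH_0^\eps/\eps}\psi_-^\eps\|_{L^2}\to 0$ at $-\infty$ translates, up to the constant global phase $e^{iq^-\cdot p^-/(2\eps)}$ absorbed into $u^\eps$, into the envelope condition $\|u^\eps(t)-e^{it\Delta/2}u_-\|_{L^2}\to 0$ as $t\to-\infty$.

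The second step is to introduce $u$ solving \eqref{eq:ulin} with the same asymptotic condition $\|u(t)-e^{it\Delta/2}u_-\|_{L^2}\to 0$ at $-\infty$. Wave operators for \eqref{eq:ulin} exist because \eqref{eq:short} with $\mu>1$ and $q(t)\sim p^\pm t$ imply $|Q(t)|\le C\langle t\rangle^{-\mu-2}$, which is integrable. The difference $r^\eps:=u^\eps-u$ satisfies \eqref{eq:ulin} with source $(V^\eps-\tfrac12\<Q(t)y,y\>)u$; the cubic Taylor remainder in \eqref{eq:Veps} yields the pointwise bound $C\sqrt\eps|y|^3|\nabla^3V(q(t)+\theta\sqrt\eps y)|$. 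Since $u_-$ is Gaussian and \eqref{eq:ulin} preserves the Gaussian class, $u(t,y)$ retains Gaussian decay in $y$, so the $L^2(\R^d_y)$-norm of the source is $O(\sqrt\eps\langle t\rangle^{-\mu-3})$. A standard $L^2$ energy estimate then gives $\|u^\eps(t)-u(t)\|_{L^2}=O(\sqrt\eps)$ uniformly in $t\in\R$.

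The third step exploits the Gaussian structure of $u$: $u(t,y)=a(t)\exp(\tfrac{i}{2}\<\Gamma(t)y,y\>)$ stays Gaussian, with $(a(t),\Gamma(t))$ governed by a Riccati-type system equivalent to the Jacobi equation of the Hamiltonian flow along $(q(t),p(t))$; equivalently, the propagator of \eqref{eq:ulin} factors through the metaplectic representation of the symplectic flow $F(t)=\frac{\d(q(t),p(t))}{\d(q^-,p^-)}\in\mathrm{Sp}(2d,\R)$, and integrability of $Q(t)$ at $\pm\infty$ ensures $F(t)\to G_+$ as $t\to+\infty$ and $\|u(t)-e^{it\Delta/2}\hat R(G_+)u_-\|_{L^2}\to 0$. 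The final step combines these estimates: inserting the $+\infty$ asymptotics of $(q(t),p(t),S(t))$ from Assumption~\ref{hyp:flot} and using the identity $\delta^+=S_+ + (q^-\cdot p^- - q^+\cdot p^+)/2$ (immediate from the definition of $\delta$), one obtains $\|\psi^\eps(t)-e^{-itH_0^\eps/\eps}\psi_+^\eps\|_{L^2}=O(\sqrt\eps)$, where $\psi_+^\eps$ is the right-hand side of the claim. By definition of $W_+^\eps$ and $S_{\rm lin}^\eps=(W_+^\eps)^*W_-^\eps$, the stated expansion follows. The main obstacle is the coordination between the metaplectic propagator of \eqref{eq:ulin}---which must be identified with the symplectic linearized flow $F(t)$ limiting to $G_+$---and the uniform-in-$t$ $O(\sqrt\eps)$ comparison of step two, which crucially relies on the Gaussian decay of $u(t,y)$ in $y$ to absorb the $|y|^3$ growth of the cubic Taylor remainder.
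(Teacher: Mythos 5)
The paper does not actually prove Theorem~\ref{theo:version-lineaire}: it is quoted as a known result from \cite{CoRoBook,HaJo00}, and the author explicitly refers the reader there. So there is no in-paper proof to compare against. That said, your sketch follows precisely the strategy the paper develops for the nonlinear analogue (Theorem~\ref{theo:cv}, proved in Section~\ref{sec:cv}): envelope decomposition \eqref{eq:chginc}, comparison of the exact envelope $u^\eps$ with the solution $u$ of \eqref{eq:ulin}, a uniform-in-time source estimate, and identification of the large-time limit with the metaplectic image of $u_-$. In that sense your route is the "right" one, and it is interesting that it also gives the linear theorem without appealing to the Gaussian-specific calculus of \cite{CoRoBook,HaJo00}.

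However, your second step has a quantitative slip that is worth correcting, because it is exactly the point that makes the uniform-in-time estimate delicate. The bound you claim for the $L^2$-norm of the source, $O(\sqrt\eps\,\langle t\rangle^{-\mu-3})$, cannot be right: the dispersing Gaussian $u(t,\cdot)$ has width growing like $\langle t\rangle$, so $\||y|^3 u(t)\|_{L^2}\sim\langle t\rangle^3$ (this is Proposition~\ref{prop:extra-u} in the nonlinear setting). Moreover, the pointwise bound $|\nabla^3V(q(t)+\theta\sqrt\eps\,y)|\lesssim\langle t\rangle^{-\mu-3}$ is only valid when $\sqrt\eps|y|\ll|q(t)|$; for $|y|\gtrsim t/\sqrt\eps$ the argument of $V$ can approach the origin, where $\nabla^3V$ is merely bounded. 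One must therefore split $\R^d_y$ into $\Omega_t=\{|y|\ge\eta t/\sqrt\eps\}$ and its complement, as the paper does in Proposition~\ref{prop:est-source}. On $\Omega_t^c$ the estimate gives $O(\sqrt\eps\,\langle t\rangle^{-\mu})$, and on $\Omega_t$ the Gaussian structure saves the day (yielding a contribution $O(e^{-c/\eps})$). You allude to the Gaussian decay being essential, but you never formulate the region split, and the claimed rate $\langle t\rangle^{-\mu-3}$ silently double-counts the decay. The corrected rate $O(\sqrt\eps\,\langle t\rangle^{-\mu})$ is still integrable for $\mu>1$, so your conclusion survives, but the estimate as written is incorrect and hides the actual difficulty of the proof.
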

As a corollary, our main result yields another interpretation of the above
statement. It turns out that a complete scattering theory is available
for \eqref{eq:ulin}. As a particular case of
Theorem~\ref{theo:scatt-class} (which addresses the nonlinear case), given $u_-\in
\Sigma$, there exist a unique $u\in C(\R;\Sigma)$ solution to
\eqref{eq:ulin} and a unique $u_+\in
\Sigma$ such that  
\begin{equation*}
  \|e^{-i\frac{t}{2}\Delta}u(t)-u_\pm \|_\Sigma \Tend t {\pm \infty}
  0. 
\end{equation*}
Then in the above theorem (where $u_-$ is restricted to be a Gaussian), we have
\begin{equation*}
  u_+ = \hat R(G_+)
  u_-.
\end{equation*}
Finally, we mention in passing the paper \cite{NierENS}, where similar
issues and results are obtained for
\begin{equation*}
  i\eps\d_t \psi^\eps + \frac{\eps^2}{2}\Delta \psi^\eps =
  V\(\frac{x}{\eps}\) \psi^\eps + U(x)\psi^\eps,
\end{equation*}
for $V$ a short-range potential, and $U$ is bounded as well as its
derivatives. The special scaling in $V$ implies that initially
concentrated waves (at scaled $\eps$) first undergo the effects of $V$,
then exit a time layer of order $\eps$, through which the main action of $V$
corresponds to the above quantum scattering operator (but with $\eps=1$
due to the new scaling in the equation). Then, the action of $V$
becomes negligible, and the propagation of the wave is dictated by
the classical dynamics associated to $U$.

\subsection{Main results}
\label{sec:main}
 We now consider the nonlinear equation
\begin{equation}
  \label{eq:psi-eps}
  i\eps\d_t \psi^\eps +\frac{\eps^2}{2}\Delta \psi^\eps = V(x)\psi^\eps +
  \eps^\alpha|\psi^\eps|^{2\si}\psi^\eps,
\end{equation}
along with asymptotic data \eqref{eq:asym-state}. We first prove that
for fixed $\eps>0$, a scattering theory is available for
\eqref{eq:psi-eps}: at this stage, the value of $\alpha$ is naturally
irrelevant, as well as the form \eqref{eq:asym-state}.
To establish a large data scattering theory for \eqref{eq:psi}, we
assume that the attractive part of the potential,
\begin{equation*}
  (\d_r V(x))_+=
  \(\frac{x}{|x|}\cdot \nabla V(x)\)_+
\end{equation*}
 is not too large, where $f_+=\max (0,f)$ for any real number $f$.
\begin{theorem}\label{theo:scatt-quant}
  Let $d\ge 3$, $\frac{2}{d}<\si<\frac{2}{d-2}$, and $V$ satisfying
  Assumption~\ref{hyp:V} for some $\mu>2$. There exists $M=M(\mu,d)$ such
  that if the attractive part of the potential $(\d_r V)_+$ satisfies
  \begin{equation*}
    (\d_r V(x))_+\le \frac{M}{(1+|x|)^{\mu+1}},\quad \forall x\in
    \R^d,
  \end{equation*}
one can define a
  scattering operator for \eqref{eq:psi} in $H^1(\R^d)$: for
  all $\psi_-^\eps\in H^1(\R^d)$, there exist a unique $\psi^\eps\in
  C(\R;H^1(\R^d))$ solution to \eqref{eq:psi} and a unique $\psi_+^\eps\in
  H^1(\R^d)$ such that
  \begin{equation*}
    \|\psi^\eps(t)-e^{i\frac{\eps t}{2}\Delta}\psi_\pm^\eps\|_{H^1(\R^d)}\Tend t {\pm
      \infty} 0.
  \end{equation*}
The (quantum)  scattering operator is the map
$S^\eps:\psi_-^\eps\mapsto \psi_+^\eps$.
\end{theorem}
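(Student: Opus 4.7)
I would follow the classical Ginibre--Velo scheme for the scattering of defocusing $H^1$-subcritical NLS, adapted to the presence of the external short-range potential. Write $U^\eps(t):=e^{-itH^\eps/\eps}$ for the propagator with potential. Under Assumption~\ref{hyp:V} (actually already for $\mu>1$), $U^\eps$ satisfies the same global-in-time Strichartz estimates as the free Schr\"odinger group, for instance by Yajima's boundedness of the linear wave operators on $L^p$ spaces. Combined with $\si<2/(d-2)$, this yields local well-posedness of \eqref{eq:psi-eps} in $H^1$ by the standard contraction argument in mixed Strichartz norms. Conservation of mass and of the energy
\begin{equation*}
E^\eps(\psi^\eps)=\frac{\eps^2}{2}\|\nabla\psi^\eps\|_{L^2}^2+\int_{\R^d} V|\psi^\eps|^2\,dx+\frac{\eps^\alpha}{\si+1}\|\psi^\eps\|_{L^{2\si+2}}^{2\si+2},
\end{equation*}
together with the boundedness of $V$ and the defocusing sign, provide a uniform $H^1$ bound, hence global existence $\psi^\eps\in C(\R;H^1)$.

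The heart of the matter is a Lin--Strauss type Morawetz estimate. Differentiating along the flow of \eqref{eq:psi-eps} the quantity $\IM\int \bar\psi^\eps (x/|x|)\cdot\eps\nabla\psi^\eps\,dx$ and integrating over $t\in\R$ yields, in dimension $d\ge 3$,
\begin{equation*}
c\int_{\R}\!\!\int_{\R^d}\frac{|\psi^\eps|^{2\si+2}}{|x|}\,dx\,dt+\text{(positive kinetic and mass terms)}\le C\|\psi^\eps\|_{L^\infty_tH^1}^2+\int_{\R}\!\!\int_{\R^d}(\d_r V)_+|\psi^\eps|^2\,dx\,dt,
\end{equation*}
where the last integral is the only potentially bad contribution coming from the potential. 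Using $(\d_r V)_+(x)\le M/(1+|x|)^{\mu+1}$ and Hardy's inequality, it is controlled by $CM$ times the positive kinetic part already present in the Morawetz identity; choosing $M=M(\mu,d)$ small enough allows its absorption in the left-hand side, and one obtains a global space-time bound on $|x|^{-1/(2\si+2)}\psi^\eps$ in $L^{2\si+2}_{t,x}$.

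Because $\si>2/d$, this Morawetz bound upgrades, via a standard Strichartz bootstrap, to a global control of $\psi^\eps$ in $L^q_tW^{1,r}_x(\R\times\R^d)$ for a well-chosen $H^1$-admissible pair. A classical Cazenave-type argument then shows that $U^\eps(-t)\psi^\eps(t)$ is Cauchy in $H^1$ as $t\to\pm\infty$; call its limits $\phi^\eps_\pm\in H^1$. Under Assumption~\ref{hyp:V} with $\mu>1$, the linear wave operators $W^\eps_\pm$ are bijective isometries on $L^2$ and isomorphisms on $H^1$ (Yajima), so setting $\psi^\eps_\pm:=(W^\eps_\pm)^{-1}\phi^\eps_\pm$ yields the claimed convergence $\|\psi^\eps(t)-e^{i\eps t\Delta/2}\psi^\eps_\pm\|_{H^1}\to 0$ as $t\to\pm\infty$. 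The existence statement, i.e. producing a unique solution from any prescribed $\psi_-^\eps\in H^1$, is proved symmetrically: solve Duhamel's formula by contraction in Strichartz norms on an interval $(-\infty,-T]$ for $T$ large, then globalize using the same a priori Morawetz/Strichartz bounds.

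The main obstacle is the Morawetz step: the attractive part $(\d_r V)_+$ enters the virial identity with the wrong sign, and the explicit smallness constant $M(\mu,d)$ in the hypothesis is exactly the threshold arising when balancing this bad contribution against the positive kinetic term via Hardy's inequality. The stronger assumption $\mu>2$ is used outside the Morawetz absorption itself, to secure enough decay of $V$ for several auxiliary steps (control of commutator and boundary terms in the virial computation, and the passage from the Morawetz bound to the global Strichartz control).
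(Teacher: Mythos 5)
The overall strategy you propose (Morawetz estimate, Strichartz bootstrap, linear scattering theory to remove the potential) matches the paper's, but the crucial Morawetz step is not correctly executed, and the logic underlying the Strichartz estimates is misattributed.

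On the Morawetz step: with the naive Lin--Strauss weight $h(x)=|x|$, the virial identity \eqref{eq:viriel} produces as positive kinetic contribution only the \emph{angular} part of the gradient, $\int |x|^{-1}\,\bigl|\nabla\psi - (x/|x|)\,\partial_r\psi\bigr|^2$, and in $d=3$ no mass term at all (the $\Delta^2|x|$ contribution is a point mass at the origin). Hardy's inequality bounds $\int |\psi|^2/|x|^2$ by $\|\nabla\psi\|_{L^2}^2$ with the \emph{full} gradient, not the angular one (think of a radial $\psi$: the angular term vanishes while the Hardy integral does not), so the attractive contribution $\iint (\partial_r V)_+\,|\psi|^2$ cannot be absorbed into the kinetic term you name. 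Absorbing it into $\|\nabla\psi\|_{L^2}^2$ on the right-hand side also fails, because that quantity only enters in $L^\infty_t$, whereas the bad term is integrated over all of $\R_t$; the resulting bound $\iint (\partial_r V)_+|\psi|^2 \lesssim M\int_\R \|\nabla\psi(t)\|_{L^2}^2\,dt$ is infinite. The paper's Proposition~\ref{prop:Morawetz} instead constructs a perturbed radial weight $h$ satisfying an ODE designed so that $\tfrac14\Delta^2 h + \nabla V\cdot\nabla h \le -c_0(1+|x|)^{-\mu-1}$; this both cancels the attractive part and produces the mass term $\iint |\psi|^2(1+|x|)^{-\mu-1}$ on the left, which is needed downstream (to rule out eigenvalues and a zero resonance). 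No such output comes out of the unmodified weight in $d=3$.

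On Strichartz estimates: you assert that global-in-time Strichartz for $U^\eps=e^{-itH^\eps/\eps}$ holds already for $\mu>1$ via Yajima's $W^{1,p}$-boundedness of wave operators. This is not correct as stated. All of these global dispersive results require $-\Delta+V$ to have no eigenvalues and no zero resonance, and decay $\mu>2$ (or more). In the paper's architecture, the smallness of $(\partial_r V)_+$ is precisely what yields the absence of eigenvalues and of a zero resonance (via Proposition~\ref{prop:Morawetz} and its corollary), and global Strichartz then follows from Rodnianski--Schlag (plus Goldberg/AFVV for the endpoint). By decoupling the smallness hypothesis from the spectral preconditions for Strichartz, your proposal leaves the Strichartz machine without fuel. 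Finally, two further steps are left unaddressed: the non-commutativity of $\nabla$ with $U_V$ in the fixed-point argument (the paper handles this with the operator $A=\sqrt{H+c_0}$, Lemma~\ref{lem:A}), and the Ginibre--Velo decay argument for $\|\psi(t)\|_{L^r}\to 0$, which requires new work because of the extra $V\psi$ term in Duhamel's formula relative to $U(t)$ (the $I_1,I_2$ estimates in the proof of Proposition~\ref{prop:AC-quant}).
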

We emphasize the fact that several recent results address the same
issue, under various assumptions on the external potential $V$:
\cite{ZhZh14} treats the case where $V$ is an inverse square (a
framework which is ruled out in our contribution), while in
\cite{CaDa-p}, the potential is more general than merely inverse
square. In \cite{CaDa-p}, a magnetic field is also included, and the
Laplacian is perturbed with variable coefficients. We make more
comparisons with \cite{CaDa-p} in Section~\ref{sec:quant}. 
\smallbreak

The second result of this paper concerns the scattering theory for the
envelope equation:

\begin{theorem}\label{theo:scatt-class}
   Let $d\ge 1$, $\frac{2}{d}\le \si<\frac{2}{(d-2)_+}$, and $V$ satisfying
  Assumption~\ref{hyp:V} for some $\mu>1$. One can define a
  scattering operator for \eqref{eq:u} in $\Sigma$: for
  all $u_-\in \Sigma$, there exist a unique $u\in
  C(\R;\Sigma)$ solution to \eqref{eq:u} and a unique $u_+\in
  \Sigma$ such that
  \begin{equation*}
    \|e^{-i\frac{t}{2}\Delta}u(t)-u_\pm\|_{\Sigma}\Tend t {\pm
      \infty} 0.
  \end{equation*}
\end{theorem}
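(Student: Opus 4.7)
The plan is to prove Theorem~\ref{theo:scatt-class} by adapting the classical scattering theory for the defocusing, energy-subcritical NLS in $\Sigma$ (in the spirit of Ginibre--Velo and Cazenave, see \cite{Ca11}), treating the time-dependent quadratic form $\tfrac12\<Q(t)y,y\>$ as a perturbation of $-\tfrac12\Delta$ that is negligible at infinity in an integral sense. Indeed, Assumption~\ref{hyp:flot} forces $|q(t)|\sim |p^\pm|\,|t|$ as $t\to\pm\infty$ since $p^\pm\ne 0$, and combined with Assumption~\ref{hyp:V} this yields
\[
|\d^k Q(t)|\lesssim \<t\>^{-\mu-2-k},\quad k\ge 0,
\]
so that $Q(t)$, $tQ(t)$, and $\<t\>^2 Q(t)$ are all integrable on $\R$, the last one requiring exactly $\mu>1$.

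Local well-posedness in $\Sigma$ is standard via Strichartz estimates for the free Schr\"odinger propagator and the Heisenberg vector field $J(t):=y+it\nabla = e^{it\Delta/2}\,y\,e^{-it\Delta/2}$, which commutes with $i\d_t+\tfrac12\Delta$ and acts on the quadratic potential through the clean identity
\[
J(t)\bigl(\tfrac12\<Q(t)y,y\>u\bigr) = \tfrac12\<Q(t)y,y\>\,J(t)u + it\,Q(t)\,y\,u.
\]
To pass from local to global I would couple this with the pseudo-energy
\[
E(t):=\tfrac12\|\nabla u\|_{L^2}^2 + \tfrac12\int\<Q(t)y,y\>|u|^2\,\dd y + \tfrac1{\sigma+1}\|u\|_{L^{2\sigma+2}}^{2\sigma+2},
\]
whose derivative is bounded by $|Q'(t)|\,\|yu\|_{L^2}^2$, and with the interpolation $\|yu\|_{L^2}\le \|J(t)u\|_{L^2}+|t|\,\|\nabla u\|_{L^2}$. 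A Gronwall argument on the triple $(\|u\|_{L^2},\|\nabla u\|_{L^2},\|J(t)u\|_{L^2})$ then closes thanks to the integrability of $\<t\>|Q(t)|$ and $\<t\>^2|Q(t)|$, yielding $u\in C(\R;\Sigma)$.

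The wave operators are constructed by a standard contraction on the Duhamel integral equation
\[
u(t) = e^{it\Delta/2}u_- - i\int_{-\infty}^t e^{i(t-s)\Delta/2}\Bigl[\tfrac12\<Q(s)y,y\>u(s) + |u(s)|^{2\sigma}u(s)\Bigr]\,\dd s
\]
in a Strichartz-type space over $(-\infty,T_0]$ with $|T_0|$ large. The nonlinear term is absorbed as in the standard scattering theory under $\sigma\ge 2/d$, while the quadratic-potential term is controlled, after commuting $J(s)$ through the integral, by a remainder bounded by $\|Q\|_{L^1((-\infty,T_0))}\to 0$. The resulting local-near-$-\infty$ solution extends globally by the a priori estimate above; a symmetric argument at $+\infty$ produces $u_+$, and uniqueness follows from Gronwall.

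The main technical obstacle is that $\tfrac12\<Q(t)y,y\>$ does not map $\Sigma$ into itself for fixed $t$, so a naive energy/regularity estimate would require $\|y^2u\|_{L^2}$, which is out of reach in $\Sigma$. The operator $J(t)$ sidesteps this by converting the singular weight $|y|^2$ into the linear source $it\,Q(t)y$, but at the cost of an explicit factor $|t|$; absorbing that factor by the integrability of $\<t\>^2|Q(t)|$ is precisely what pins the assumption to $\mu>1$.
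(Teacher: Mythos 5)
Your global-in-time a priori estimate is on track: the pseudo-energy whose derivative is controlled by $\lVert\dot Q(t)\rVert\,\lVert yu\rVert_{L^2}^2$, together with the splitting $\lVert yu\rVert_{L^2}\le\lVert J(t)u\rVert_{L^2}+|t|\,\lVert\nabla u\rVert_{L^2}$ and the integrability of $\<t\>Q$ and $\<t\>^2Q$ coming from $\mu>1$, is essentially the pseudo-conformal argument the paper runs in its asymptotic-completeness step. The gap is in the wave-operator construction, and it is structural, not cosmetic.

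You propose to Duhamel against the \emph{free} propagator, so that $\tfrac12\<Q(s)y,y\>u(s)$ sits inside the integral, and you claim that commuting $J(s)$ through ``converts the singular weight $|y|^2$ into the linear source $itQ(t)y$.'' Your commutation identity
\begin{equation*}
J(t)\Bigl(\tfrac12\<Q(t)y,y\>u\Bigr)=\tfrac12\<Q(t)y,y\>\,J(t)u+itQ(t)yu
\end{equation*}
is correct, but it does not do what you want: the first term still carries the quadratic weight $\<Q(t)y,y\>$ applied to $J(t)u$, so estimating it in $L^2$ (or in any Strichartz dual norm) requires control of $\lVert\,|y|^2 J(t)u\rVert_{L^2}$, which is $\Sigma^3$-type regularity and is simply not available for data $u_-\in\Sigma$. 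The same problem already appears without $J$: bounding $\lVert\<Q(s)y,y\>u(s)\rVert_{L^2}\lesssim\<s\>^{-\mu-2}\lVert\,|y|^2u(s)\rVert_{L^2}$ needs two momenta, not one. So the contraction cannot close in a $\Sigma$-based Strichartz space, and the claimed bound by $\lVert Q\rVert_{L^1(-\infty,T_0)}$ is unjustified.

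The paper sidesteps this exactly by \emph{not} putting the potential inside the Duhamel integrand: it Duhamel-integrates against the full linear propagator $U_Q(t,s)$ of $i\d_t+\tfrac12\Delta-\tfrac12\<Q(t)y,y\>$, so the only source term is the nonlinearity $|u|^{2\sigma}u$, and the commutator of $J$ with $i\d_t-H_Q$ is $itQ(t)y=itQ(t)J(t)+t^2Q(t)\nabla$, which involves only first-order weights and is integrable under $\mu>1$. The price is that one must establish global Strichartz estimates for the time-dependent propagator $U_Q$; the paper does this by a Mehler-type formula whose dispersion rate follows from a matrix Riccati equation solved backward from $t=-\infty$ (Lemma~\ref{lem:riccati}). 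A second step (Cook's method, Proposition~\ref{prop:Cook-class}) then compares $U_Q$ with the free flow $e^{it\Delta/2}$ to produce the statement in terms of $e^{-it\Delta/2}u(t)$. Your proposal would need to incorporate both of those ingredients (propagator with the potential built in, plus its dispersive estimates) before the contraction can be made to close.
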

As mentioned above, the proof includes the construction of a linear
scattering operator, comparing the dynamics associated to
\eqref{eq:ulin} to the free dynamics $e^{i\frac{t}{2}\Delta}$. In the
above formula, we have incorporated the information that
$e^{i\frac{t}{2}\Delta}$ is unitary on $H^1(\R^d)$, but \emph{not on
}$\Sigma$ (see e.g. \cite{CazCourant}). 
\smallbreak

We can now state the nonlinear analogue to
Theorem~\ref{theo:version-lineaire}. Since
Theorem~\ref{theo:scatt-quant} requires $d\ge 3$, we naturally have to
make this assumption. On the other hand, we will need the
approximate envelope $u$ to be rather smooth, which requires a smooth
nonlinearity, $\si\in \N$. Intersecting this property with the
assumptions of Theorem~\ref{theo:scatt-quant} leaves only one case:
$d=3$ and $\si=1$, that is \eqref{eq:1}, up to the scaling. We will
see in Section~\ref{sec:cv} that considering $d=3$ is also crucial,
since the argument uses dispersive estimates which are known only in
the three-dimensional case for $V$ satisfying Assumption~\ref{hyp:V}
with $\mu>2$ (larger values for $\mu$ could be considered in higher
dimensions, though).  Introduce
the notation
\begin{equation*}
  \Sigma^k=\{ f\in H^k(\R^d),\quad x\mapsto |x|^k f(x)\in
  L^2(\R^d)\}. 
\end{equation*}

\begin{theorem}\label{theo:cv}
 Let Assumptions~\ref{hyp:V} and \ref{hyp:flot} be satisfied, with
 $\mu>2$ and $V$ as in Theorem~\ref{theo:scatt-quant}. Consider 
 $\psi^\eps$ solution to 
 \begin{equation*}
    i\eps\d_t \psi^\eps +\frac{\eps^2}{2}\Delta \psi^\eps =
  V(x)\psi^\eps + \eps^{5/2}|\psi^\eps|^2 \psi^\eps,\quad (t,x)\in \R\times \R^3,
 \end{equation*}
and such that \eqref{eq:asym-state} holds, with $u_-\in \Sigma^7$.
Then the
following asymptotic expansion holds in $L^2(\R^3)$:
\begin{equation}\label{eq:asym-finale}
  S^\eps \psi_-^\eps = \frac{1}{\eps^{3/4}}e^{i\delta^+/\eps} e^{ip^+\cdot
    (x-q^+)/\eps+iq^+\cdot p^+/(2\eps)} u_+\(\frac{x-q^+}{\sqrt\eps}\)
  +\O(\sqrt\eps), 
\end{equation}
where $S^\eps$ is given by Theorem~\ref{theo:scatt-quant} and $u_+$
stems from Theorem~\ref{theo:scatt-class}. 
\end{theorem}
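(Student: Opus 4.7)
The plan is to compare the true solution $\psi^\eps$ with an approximate solution $\varphi^\eps$ built, via the change of variables \eqref{eq:chginc}--\eqref{eq:phi}, from the nonlinear envelope $u$ solving \eqref{eq:u} with asymptotic datum $u_-$ at $t=-\infty$. Let $u^\eps$ denote the exact envelope extracted from $\psi^\eps$ through \eqref{eq:chginc}: the critical scaling $\alpha=5/2=1+d\si/2$ (here $d=3$, $\si=1$) is precisely what keeps the nonlinearity of unit size at the envelope level, so that
\begin{equation*}
i\d_t u^\eps + \tfrac{1}{2}\Delta u^\eps = V^\eps(t,y) u^\eps + |u^\eps|^2 u^\eps,
\end{equation*}
with $V^\eps$ as in \eqref{eq:Veps}. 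Both $u^\eps(t)$ and $u(t)$ are asymptotic to $e^{it\Delta/2}u_-$ as $t\to-\infty$. The theorem will follow once the uniform bound $\|u^\eps(t)-u(t)\|_{L^2(\R^3)}=\O(\sqrt\eps)$ is established for every $t\in\R$: pulling it back through \eqref{eq:phi} yields the same $\O(\sqrt\eps)$ estimate for $\|\psi^\eps(t)-\varphi^\eps(t)\|_{L^2}$, and the right-hand side of \eqref{eq:asym-finale} is the $t\to+\infty$ coherent-state form of $\varphi^\eps$ once Assumption \ref{hyp:flot} is invoked.

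The error $w^\eps := u^\eps - u$ satisfies a Schrödinger equation whose source splits into a linear piece $\tfrac{1}{2}\<Q(t)y,y\>w^\eps$, a nonlinear difference $|u^\eps|^2u^\eps-|u|^2u$ that is cubic around the reference $u$, and most importantly the forcing
\begin{equation*}
\bigl(V^\eps(t,y)-\tfrac{1}{2}\<Q(t)y,y\>\bigr)u^\eps.
\end{equation*}
Taylor's formula at order two bounds this forcing pointwise by $C\sqrt\eps\,|y|^3\sup_{0\le\theta\le1}|\d^3 V(q(t)+\theta\sqrt\eps y)|$. Since $V$ is short range with $\mu>2$, $|\d^3 V(x)|\lesssim \<x\>^{-\mu-3}$, and Assumption \ref{hyp:flot} gives $|q(t)|\gtrsim \<t\>$ for large $|t|$. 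After splitting the $y$-integral between $|y|\le |q(t)|/(2\sqrt\eps)$ (where the pointwise decay of $\d^3 V$ at $q(t)$ applies) and its complement (where rapid decay of $u$ in $y$ makes the contribution negligible), the forcing is controlled by $\sqrt\eps\,\<t\>^{-\mu-3}$ times weighted norms of $u^\eps$. This $L^1_t$-in-time structure is the crucial gain, and it is here that the hypothesis $u_-\in\Sigma^7$ enters: by propagating $\Sigma^k$-regularity through the envelope scattering of Theorem \ref{theo:scatt-class} one gets $\sup_t \|u(t)\|_{\Sigma^k}<\infty$ for $k\le 7$, which comfortably absorbs the $|y|^3$ weight and the derivatives produced by Strichartz manipulations.

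The hard part is to close the estimate \emph{uniformly in $t$}: a naive Gronwall argument applied to $w^\eps$ would only reach an Ehrenfest time $t\sim c\ln(1/\eps)$. Two ingredients sidestep this. First, Theorem \ref{theo:scatt-class} supplies global-in-time Strichartz integrability of $u$, so the cubic difference $|u^\eps|^2u^\eps-|u|^2u$ is treated via a bootstrap as a perturbation of $u$ whose coefficient is small in a global Strichartz norm, rather than in $L^\infty$ over a bounded interval. Second, transporting the estimate back to the quantum level (needed to then apply Theorem \ref{theo:scatt-quant}) requires global-in-time, $\eps$-uniform dispersive estimates for $H^\eps=-\eps^2\Delta/2+V$; these are precisely the estimates available in $\R^3$ for potentials obeying Assumption \ref{hyp:V} with $\mu>2$, and this is where the three-dimensional restriction is genuinely used. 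Combining the two ingredients with the $L^1_t$ forcing bound yields $\|w^\eps\|_{L^\infty_t L^2_x}=\O(\sqrt\eps)$.

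It remains to let $t\to+\infty$. On the envelope side, Theorem \ref{theo:scatt-class} furnishes $u_+\in\Sigma$ with $e^{-it\Delta/2}u(t)\to u_+$ in $\Sigma$. On the classical side, Assumption \ref{hyp:flot} gives $q(t)=p^+ t+q^++o(1)$, $p(t)=p^++o(1)$, and an elementary computation shows
\begin{equation*}
S(t)-\tfrac{q(t)\cdot p(t)-q^-\cdot p^-}{2} \Tend t {+\infty} \delta^+.
\end{equation*}
Substituting these asymptotics into \eqref{eq:phi}, with $u(t,\cdot)$ replaced by $e^{it\Delta/2}u_+$ up to $o(1)$ in $\Sigma$, rewrites $\varphi^\eps(t,x)$ for large $t$ in the coherent-state form of the right-hand side of \eqref{eq:asym-finale}, based at $(q^+,p^+)$ with profile $u_+$ and global phase $e^{i\delta^+/\eps}$. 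Combined with $\|\psi^\eps(t)-\varphi^\eps(t)\|_{L^2}=\O(\sqrt\eps)$ and the existence of $\psi_+^\eps=S^\eps\psi_-^\eps$ from Theorem \ref{theo:scatt-quant}, this yields \eqref{eq:asym-finale} and completes the proof.
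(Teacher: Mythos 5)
Your overall strategy coincides with the paper's: compare $\psi^\eps$ with $\varphi^\eps$ (equivalently $u^\eps$ with $u$ through the isometric change of variable \eqref{eq:chginc}--\eqref{eq:phi}), prove a uniform-in-time $L^2$ bound $\O(\sqrt\eps)$ for the error, then pass to $t\to+\infty$ through the classical, envelope and quantum scattering theories. You also correctly identify the Taylor-expansion bound on the source, the split in $y$ around the classical trajectory, the role of $u_-\in\Sigma^7$ in propagating weighted regularity of $u$, and the fact that the $\eps$-uniform global-in-time dispersive estimates for $H^\eps$ (Goldberg, $d=3$, $\mu>2$) are the place where the three-dimensional restriction is used.

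There is, however, a genuine gap in the step where you close the bootstrap. You assert that the cubic difference $|u^\eps|^2u^\eps-|u|^2u$ ``is treated via a bootstrap as a perturbation of $u$ whose coefficient is small in a global Strichartz norm.'' This is not true: in the $L^2$-admissible Strichartz estimate the relevant factor is $\eps^{3/4}\bigl(\|w^\eps\|_{L^8L^4}^2+\|\varphi^\eps\|_{L^8L^4}^2\bigr)$, and $\|\varphi^\eps\|_{L^8(\R;L^4)}\sim\eps^{-3/8}$ (since $\|\varphi^\eps(t)\|_{L^4}\sim(\<t\>\sqrt\eps)^{-3/4}$), so the coefficient is $\O(1)$ globally, not small — a direct absorption fails. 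The paper needs two separate mechanisms: \emph{first}, an a priori bound $\eps^{1/q}\|w^\eps\|_{L^q(\R;L^r)}\lesssim\eps^{1/4}$ in $\dot H^{1/2}$-admissible Strichartz norms (the norms in which the cubic 3D NLS is scale-invariant), proved by Lemma~\ref{lem:stri-inhom-eps} together with a bootstrap; and \emph{second}, within that bootstrap, a partition of $\R_t$ into finitely many $\eps$-independent subintervals $I_j$ on which $\|\<t\>^{-3/5}\|_{L^{20}(I_j)}$ is small enough to absorb the linear-in-$w^\eps$ term, carrying the estimate from one interval to the next via Duhamel. Only once this $\dot H^{1/2}$-critical estimate and the sharper $L^{3/2}$ bound on $\mathcal L^\eps$ are available does plugging into the $L^2$-admissible estimate (again on the same finite subintervals) produce $\O(\sqrt\eps)$. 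Your sketch leaves both the criticality and the time-interval splitting implicit, and the quantitative claim you substitute for them is false, so as written the bootstrap would not close.
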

\begin{remark}
  In the subcritical case, that is if we consider
 \begin{equation*}
    i\eps\d_t \psi^\eps +\frac{\eps^2}{2}\Delta \psi^\eps =
  V(x)\psi^\eps + \eps^{\alpha}|\psi^\eps|^2 \psi^\eps,\quad (t,x)\in \R\times \R^3,
 \end{equation*}
along with \eqref{eq:asym-state}, for some $\alpha>5/2$, the argument
of the proof shows that \eqref{eq:asym-finale} remains true, but with $u_+$ given
by the scattering operator associated to \eqref{eq:ulin} (as opposed
to \eqref{eq:u}), that is, the same conclusion as in
Theorem~\ref{theo:version-lineaire} when $u_-$ is a Gaussian. 
\end{remark}
As a corollary of the proof of the above result, and of the analysis
from \cite{CaFe11}, we infer:
\begin{corollary}[Asymptotic decoupling]\label{cor:decoupling}
  Let Assumption~\ref{hyp:V} be satisfied, with
 $\mu>2$ and $V$ as in Theorem~\ref{theo:scatt-quant}. Consider 
 $\psi^\eps$ solution to 
 \begin{equation*}
    i\eps\d_t \psi^\eps +\frac{\eps^2}{2}\Delta \psi^\eps =
  V(x)\psi^\eps + \eps^{5/2}|\psi^\eps|^2 \psi^\eps,\quad (t,x)\in \R\times \R^3,
 \end{equation*}
with initial datum
\begin{equation*}
  \psi^\eps(0,x) = \sum_{j=1}^N\frac{1}{\eps^{3/4}}a_j\(\frac{x-q_{0j}}{\sqrt\eps}\)
  e^{ip_{0j}\cdot (x-q_{0j})/\eps}=:\psi_0^\eps(x),
\end{equation*}
where $N\ge 2$, $q_{0j},p_{0j}\in \R^3$, $p_{0j}\not =0$ so that scattering   is
available as
$t\to +\infty$ for  $(q_j(t),p_j(t))$, in the sense of
Assumption~\ref{hyp:flot}, and $a_j\in \Sch(\R^3)$. We suppose
$(q_{0j},p_{0j})\not =(q_{0k},p_{0k})$ for $j\not =k$. Then we have
the uniform estimate: 
\begin{equation*}
  \sup_{t\in \R}\left\| \psi^\eps(t) - \sum_{j=1}^N
    \varphi_j^\eps(t)\right\|_{L^2(\R^3)} \Tend \eps
  0 0 ,
\end{equation*}
where $\varphi_j^\eps$ is the approximate solution with the $j$-th
wave packet as an initial datum. As a consequence, the 
 asymptotic expansion holds in $L^2(\R^3)$, as $\eps \to 0$:
\begin{equation*}
  \(W^\eps_\pm\)^{-1} \psi_0^\eps =\sum_{j=1}^N
  \frac{1}{\eps^{3/4}}e^{i\delta_{j}^\pm/\eps} e^{ip_{j}^\pm\cdot 
    (x-q_{j}^\pm)/\eps+iq_{j}^\pm\cdot p_{j}^\pm/(2\eps)}
  u_{j\pm}\(\frac{x-q_{j}^\pm}{\sqrt\eps}\) 
  +o(1), 
\end{equation*}
where the inverse wave operators $\(W^\eps_\pm\)^{-1} $ stem from
Theorem~\ref{theo:scatt-quant}, the $u_{j\pm}$'s 
are the asymptotic states emanating from $a_j$, and 
\begin{equation*}
  \delta_{j}^\pm = \lim_{t\to \pm\infty}\(S_j(t) - \frac{q_j(t)\cdot
    p_j(t)-q_{0j}\cdot p_{0j}}{2}\)\in \R. 
\end{equation*}
\end{corollary}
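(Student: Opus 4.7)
The plan is to combine the uniform-in-time one-packet analysis underlying Theorem~\ref{theo:cv} with a nonlinear decoupling estimate for the $N$ coherent states at the level of the cubic nonlinearity. The main obstacle is precisely this decoupling at times where two classical trajectories may be close in phase space; once it is secured, a Gr\"onwall argument and the single-packet scattering produce both conclusions.

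First I apply Theorem~\ref{theo:cv} (and the single-packet analysis behind it) separately to each $j\in\{1,\dots,N\}$: with the single coherent state centered at $(q_{0j},p_{0j})$ with profile $a_j\in\Sch(\R^3)$, the corresponding solution $\psi_j^\eps$ of \eqref{eq:psi-eps} is approximated uniformly in $t\in\R$ in $L^2$ by the envelope approximation $\varphi_j^\eps$ built from the Hamiltonian flow \eqref{eq:hamil} starting at $(q_{0j},p_{0j})$ and from the envelope equation \eqref{eq:u} with $Q(t)=\nabla^2V(q_j(t))$ and initial datum $a_j$. Set $w^\eps=\psi^\eps-\sum_j\varphi_j^\eps$; each $\varphi_j^\eps$ satisfies \eqref{eq:psi-eps} up to a consistency error that tends to zero as in the proof of Theorem~\ref{theo:cv}, so $w^\eps$ vanishes at $t=0$ and solves a semi-classical Schr\"odinger equation whose nonlinear right-hand side I split as
\[
\eps^{5/2}\bigl(|\psi^\eps|^2\psi^\eps-|\Phi^\eps|^2\Phi^\eps\bigr)\;+\;\eps^{5/2}\Bigl(|\Phi^\eps|^2\Phi^\eps-\sum_{j=1}^N|\varphi_j^\eps|^2\varphi_j^\eps\Bigr),\qquad \Phi^\eps:=\sum_{j=1}^N\varphi_j^\eps.
\]
The first piece is Lipschitz in $w^\eps$ and will be absorbed by a Gr\"onwall--Strichartz argument for $H^\eps$, based on the $d=3$ dispersive estimates for $e^{-itH^\eps/\eps}$ available under $\mu>2$ that already enter Section~\ref{sec:cv}.

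The heart of the proof is the second piece, which is a sum of cross monomials $\varphi_j^\eps\overline{\varphi_k^\eps}\varphi_\ell^\eps$ with $(j,k,\ell)$ not all equal, and which must go to zero in a suitable Strichartz norm over an Ehrenfest window. Writing two factors with distinct indices in the form \eqref{eq:phi} and changing variable to $y=(x-q_j(t))/\sqrt\eps$, a factor $\varphi_k^\eps$ with $k\ne j$ is evaluated at $y+(q_j(t)-q_k(t))/\sqrt\eps$ and carries a phase oscillating at frequency $(p_j(t)-p_k(t))/\sqrt\eps$ in $y$. Since $(q_{0j},p_{0j})\ne(q_{0k},p_{0k})$, the trajectories never coincide in phase space by uniqueness for \eqref{eq:hamil}: at times where $q_j(t)\ne q_k(t)$, the Schwartz regularity of the envelopes $u_j$, propagated by \eqref{eq:u} from $a_j\in\Sch$, yields any power of $\eps$; at the (locally finite, transverse) crossings in position one has $p_j(t)\ne p_k(t)$, and non-stationary phase in $y$ provides the gain in $\eps$. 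Integrability in $t$ follows from the classical scattering geometry: the trajectories separate ballistically as $|t|\to\infty$, confining crossings in position to a bounded set of times.

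Combining the two bounds, a Gr\"onwall argument yields $\sup_{|t|\le T}\|w^\eps(t)\|_{L^2}\to 0$ on any Ehrenfest window $T\le c\ln(1/\eps)$, for $c$ small enough. Beyond this window, Theorem~\ref{theo:scatt-quant} and Theorem~\ref{theo:scatt-class} guarantee that $\psi^\eps$ and each $\varphi_j^\eps$ are close in $L^2$ to free Schr\"odinger evolutions of their respective asymptotic states; the decoupling then reduces to the orthogonality in frequency of free semi-classical wave packets with distinct $p_j^\pm$, which is classical. This establishes the first, uniform-in-time statement. The $L^2$ expansion of $(W_\pm^\eps)^{-1}\psi_0^\eps$ then follows by applying $(W_\pm^\eps)^{-1}$ to the approximation and inserting the description of each $\varphi_j^\eps$ as $t\to\pm\infty$ given by Theorem~\ref{theo:cv}, the phases $\delta_j^\pm$ being the limits of the action-like quantities \eqref{eq:action} guaranteed by Assumption~\ref{hyp:flot}.
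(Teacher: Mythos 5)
The overall architecture of the proposal is aligned with the paper: isolate the error $w^\eps=\psi^\eps-\sum_j\varphi_j^\eps$, split the nonlinearity into a term that is Lipschitz in $w^\eps$ and a cross-interaction term, and control the latter. Your treatment of the finite-time regime via non-stationary phase and separation of trajectories is in the spirit of the estimate imported from \cite{CaFe11}, which gives $\frac{1}{\eps}\|\mathcal N_I^\eps\|_{L^1(0,T;L^2)}\le C(T,\gamma)\eps^\gamma$ for any fixed $T$ (not merely an Ehrenfest window; the constant is allowed to depend on $T$, which is essential for the subsequent limiting argument).

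The genuine gap is in the large-time regime. You claim that ``beyond this window... the decoupling then reduces to the orthogonality in frequency of free semi-classical wave packets with distinct $p_j^\pm$.'' This is circular: identifying $\psi_+^\eps$ as (close to) the sum of the individual asymptotic wave packets is precisely what the second assertion of the corollary states, and it is a \emph{consequence} of the uniform-in-time estimate, not a tool that can be used to prove it. What the paper actually does on $[T,\infty)$ is quantitative and does not involve frequency orthogonality at all: one reruns the Strichartz machinery of Theorem~\ref{theo:cv-unif}, and the key observation is that the dispersive decay $\|u_j(t)\|_{L^\infty}\lesssim\<t\>^{-3/2}$ (from the $J$-based Gagliardo--Nirenberg inequality and Proposition~\ref{prop:extra-u}) forces the interaction source term to satisfy $\frac{1}{\eps}\|\mathcal N_I^\eps(t)\|_{L^2}\lesssim t^{-3}$ and $\frac{1}{\eps}\|\mathcal N_I^\eps(t)\|_{L^{3/2}}\lesssim \eps^{1/4}t^{-5/2}$, \emph{regardless} of the positions or momenta of the trajectories. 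These are integrable in $t$ on $[T,\infty)$, with total contribution $O(T^{-2})$, so that $\|w^\eps\|_{L^\infty(0,\infty;L^2)}\le C(T,\gamma)\eps^\gamma+C/T^2$; letting $\eps\to0$ and then $T\to\infty$ closes the argument. Your outline supplies neither this dispersive mechanism nor the two-parameter limiting step, and as written it would not yield the stated $\sup_{t\in\R}$ bound.

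Two smaller points. First, restricting the Gr\"onwall step to $T\le c\ln(1/\eps)$ is both unnecessary and an obstruction here: the argument needs $T$ fixed and large, then $\eps\to0$, then $T\to\infty$; tying $T$ to $\eps$ breaks that structure. Second, the reduction from $N$ packets to pairwise interactions should be made explicit (as the paper does): every cross-monomial contains at least two factors with distinct trajectory indices, which suffices to invoke the two-packet bound after pulling out the remaining factor in $L^\infty$.
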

\begin{remark}
  In the case $V=0$, the approximation by wave packets is actually
  exact, since then $Q(t)\equiv 0$, hence $u^\eps=u$. For one wave
  packet, Theorem~\ref{theo:cv} 
  becomes empty, since it is merely a rescaling. On the other hand,
  for two initial wave packets, even in the case $V=0$,
  Corollary~\ref{cor:decoupling} brings some information, reminiscent
  of profile decomposition. More precisely, define $u^\eps$ by
  \eqref{eq:chginc}, and choose (arbitrarily) to privilege the trajectory
  $(q_1,p_1)$. The Cauchy problem is then equivalent to 
  \begin{equation*}
\left\{
\begin{aligned}
    &i\d_t u^\eps+\frac{1}{2}\Delta u^\eps = |u^\eps|^2 u^\eps,\\
&    u^\eps(0,y) = a_1(y) + a_2\( y +\frac{q_{01}-q_{02}}{\sqrt\eps}\)
    e^{ip_{02}\cdot \delta q_0/\eps -i\delta p_0\cdot y/\sqrt\eps},
  \end{aligned}
\right.
\end{equation*}
where we have set $\delta p_0 = p_{01}-p_{02}$ and $\delta q_0
=q_{01}-q_{02}$. 
Note however that the initial datum is uniformly bounded in
$L^2(\R^3)$, but in no $H^s(\R^3)$ for $s>0$ (if $p_{01}\not =
p_{02}$), while the equation is 
$\dot H^{1/2}$-critical, Therefore, even in the case
$V=0$, 
Corollary~\ref{cor:decoupling} does not seem to be a consequence of
profile decompositions like in
e.g. \cite{DuHoRo08,Keraani01,MerleVega98}. In view of
\eqref{eq:hamil}, the approximation provided by
Corollary~\ref{cor:decoupling} reads, in that case:
\begin{equation*}
  u^\eps(t,y) = u_1(t,y) + u_2\(t,y
  +\frac{t \delta p_0+\delta q_0}{\sqrt\eps}\)
  e^{i\phi_2^\eps(t,y)}+o(1)\quad \text{in }L^\infty(\R;L^2(\R^3)),
\end{equation*}
where the phase shift is given by
\begin{align*}
  \phi^\eps_2(t,y) &= \frac{1}{\eps}p_{02}\cdot \( t\delta p_0+\delta
  q_0\) -\frac{1}{\sqrt\eps}\delta p_0\cdot y +\frac{t}{2\eps}
  \( |p_{02}|^2-|p_{01}|^2\) \\
&= \frac{1}{\eps}p_{02}\cdot \delta
  q_0 -\frac{1}{\sqrt\eps}\delta p_0\cdot y -\frac{t}{2\eps}|\delta
  p_0|^2. 
\end{align*}
\end{remark}

\noindent {\bf Notation.} We write $a^\eps(t)\lesssim b^\eps(t)$
whenever there exists $C$ independent of $\eps\in (0,1]$ and $t$ such
that $a^\eps(t)\le C b^\eps(t)$.

\section{Spectral properties and consequences}
\label{sec:spectral}

In this section, we derive some useful properties for the Hamiltonian
\begin{equation*}
  H=-\frac{1}{2}\Delta +V.
\end{equation*}
Since the dependence upon $\eps$ is not addressed in this
section, we assume $\eps=1$.
\smallbreak

First, it follows for instance from \cite{Mourre} that
Assumption~\ref{hyp:V} implies that $H$ has no
singular spectrum. Based on Morawetz estimates, we show that $H$ has
no eigenvalue, provided that the  attractive part of $V$ is
sufficiently small. Therefore, the spectrum of $H$ is
purely absolutely continuous. 
Finally, again if  the  attractive part of $V$ is
sufficiently small, zero is not a resonance of $H$, so Strichartz
estimates are available for $e^{-itH}$.

\subsection{Morawetz estimates and a first consequence}
\label{sec:morawetz}

In this section, we want to treat both linear and nonlinear equations,
so we consider
\begin{equation}
  \label{eq:psi-gen}
  i\d_t \psi +\frac{1}{2}\Delta \psi = V\psi + \lambda
  |\psi|^{2\si}\psi,\quad \l \in \R.
\end{equation}
Morawetz estimate in the linear case $\l=0$ will show the absence of
eigenvalues. In the nonlinear case $\l>0$, these estimates will be a
crucial tool for prove scattering in the quantum case. 
 The following lemma and its proof are essentially a rewriting of the
 presentation from \cite{BaRuVe06}.  
\begin{proposition}[Morawetz inequality]\label{prop:Morawetz}
   Let $d\ge 3$, and $V$ satisfying
  Assumption~\ref{hyp:V} for some $\mu>2$. There exists $M=M(\mu,d)>0$ such
  that if the attractive part of the potential satisfies
  \begin{equation*}
    (\d_r V(x))_+ \le \frac{M}{(1+|x|)^{\mu+1}},\quad \forall x\in
    \R^d,
  \end{equation*}
then any solution $\psi\in L^\infty(\R;H^1(\R^d))$ to \eqref{eq:psi-gen}
satisfies
\begin{equation}\label{eq:morawetz}
 \l  \iint_{\R\times \R^d}\frac{|\psi(t,x)|^{2\si+2}}{|x|}dtdx +
 \iint_{\R\times \R^d}\frac{|\psi(t,x)|^{2}}{(1+|x|)^{\mu+1}}dtdx\lesssim 
  \|\psi\|_{L^\infty(\R;H^1)}^2. 
\end{equation}
\end{proposition}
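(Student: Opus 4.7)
The plan is a Morawetz multiplier argument with a carefully chosen radial, convex weight $\phi$. Define the Morawetz action
\begin{equation*}
M_\phi(t) = 2\IM \int_{\R^d} \bar\psi(t,x)\,\nabla\phi(x)\cdot\nabla\psi(t,x)\,dx,
\end{equation*}
and differentiate along solutions of \eqref{eq:psi-gen}. The standard computation (integration by parts using the equation and its conjugate) produces the identity
\begin{equation*}
\frac{d}{dt}M_\phi(t) = \int(\nabla^2\phi)(\nabla\bar\psi,\nabla\psi)\,dx - \tfrac14\!\int (\Delta^2\phi)|\psi|^2\,dx - \int \nabla\phi\cdot\nabla V\,|\psi|^2\,dx + \tfrac{\lambda\sigma}{\sigma+1}\!\int (\Delta\phi)|\psi|^{2\sigma+2}\,dx.
\end{equation*}
If $\phi$ has bounded gradient, then $|M_\phi(t)|\lesssim \|\nabla\phi\|_{L^\infty}\|\psi\|_{L^\infty H^1}^2$, so integrating the identity in $t$ over $[-T,T]$ bounds the resulting right-hand side uniformly in $T$ by $\|\psi\|_{L^\infty H^1}^2$.

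With the prototype choice $\phi(x)=|x|$ (regularized near the origin when $d=3$) one has: $\nabla^2\phi\ge 0$, $-\Delta^2\phi\ge 0$ as a distribution behaving like $|x|^{-3}$ for $d\ge 4$ (and like a Dirac mass at $0$ for $d=3$), $\Delta\phi=(d-1)/|x|$ (so the nonlinear term is exactly the desired $\iint|\psi|^{2\sigma+2}/|x|$), and $\nabla\phi\cdot\nabla V=\partial_r V$. Splitting $\partial_r V=(\partial_r V)_+-(\partial_r V)_-$, the $(\partial_r V)_-$ piece has a favourable sign, while the attractive part is bounded in absolute value by $M\iint|\psi|^2/(1+|x|)^{\mu+1}$. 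Since $\mu+1>3$, the bilaplacian term dominates this bad contribution in the region $|x|\ge 1$. To treat $|x|\le 1$ as well, I would modify $\phi$ so that $\phi'(r)$ is a smooth non-decreasing bounded profile, matching $r$ near $0$ and $1$ at infinity; this adds positivity in the Hessian term and, combined with Hardy's inequality on $\R^d$ ($d\ge 3$), yields the inner piece $\iint_{|x|\le 1}|\psi|^2$ as well. This produces a constant $c=c(\mu,d)>0$ in front of $\iint|\psi|^2/(1+|x|)^{\mu+1}$ on the left, and it suffices then to fix $M<c$ in the hypothesis so that the attractive term may be absorbed; sending $T\to\infty$ yields \eqref{eq:morawetz}.

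The main technical obstacle is precisely isolating a \emph{time-integrated}, non-negative $(1+|x|)^{-(\mu+1)}$-weighted $L^2$ contribution as an output of the Morawetz identity: a pointwise-in-$t$ Hardy-type bound is not enough, since the control over $\R_t$ is what allows absorption of the attractive part. This is the reason the decay assumption $\mu>2$ is essential — it is exactly the condition under which the bilaplacian weight $|x|^{-3}$ produced by $\phi=|x|$ dominates $(1+|x|)^{-(\mu+1)}$ at infinity. A secondary, routine issue is to justify the formal manipulations for a mere $\psi\in L^\infty(\R;H^1)$ solution of \eqref{eq:psi-gen}, which is handled by regularizing $\phi$ (and, if needed, $\psi$) and passing to the limit.
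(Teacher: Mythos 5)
The Morawetz-multiplier framework and the virial identity are exactly the paper's, and your analysis is essentially sound for $d\ge 4$: there $-\Delta^2|x|=(d-1)(d-3)/|x|^3$ is strictly positive, decays like $r^{-3}$, and, since $\mu+1>3$, it dominates $(\partial_r V)_+\le M(1+r)^{-\mu-1}$ \emph{uniformly on $(0,\infty)$} once $M$ is small, producing both terms of \eqref{eq:morawetz}. Your interpretation of $\mu>2$ in that dimensional range is therefore legitimate, and arguably simpler than the paper's.

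The gap is in $d=3$, which is precisely the case the paper needs for Theorem~\ref{theo:cv}. There $\Delta^2|x|$ is a point mass at the origin and \emph{vanishes identically for $r>0$}, and any modification of $\phi$ that keeps $\phi'(r)\equiv 1$ beyond some $R$ inherits $\Delta^2\phi\equiv 0$ for $r>R$. So on $\{|x|\ge 1\}$ there is simply no bilaplacian term available to absorb the attractive contribution $M(1+r)^{-\mu-1}|\psi|^2$, and the sentence ``the bilaplacian term dominates this bad contribution in the region $|x|\ge 1$'' is false for $d=3$. Your fallback — making $\phi''>0$ near the origin and invoking Hardy — also does not close the estimate: the Hessian term is a non-negative quantity you can only \emph{drop}, not trade against the $|\psi|^2$-weighted attractive part, and, as you yourself note, a pointwise-in-$t$ Hardy inequality cannot supply the time-integrated positivity that the proposition asserts. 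The difficulty you flag at the end is thus real, but your proposal does not resolve it.

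What the paper does differently is construct a genuinely non-trivial weight $h$ by solving the fourth-order ODE $\tfrac14 h^{(4)}+\tfrac1r h^{(3)}=-C(1+r)^{-\mu-1}$, yielding $h^{(3)}\sim -k r^{-\min(\mu,4)}$ and $h''\sim \kappa r^{-\min(\mu-1,3)}$. Crucially $h''$ is \emph{never} compactly supported: it decays only polynomially, so $\tfrac14\Delta^2 h$ itself decays like $(1+r)^{-\mu-1}$ all the way out, which is exactly what is needed to majorize the attractive part and leave the $c_0(1+r)^{-\mu-1}$ residual in \eqref{eq:morawetz}. The hypothesis $\mu>2$ enters not as a comparison of $r^{-3}$ against $r^{-\mu-1}$, but to make $h''\in L^1(\R_+)$ and hence $h'\in L^\infty$, which is what lets the boundary term $|M_h(t)|\lesssim\|h'\|_{L^\infty}\|\psi\|_{L^\infty H^1}^2$ close the argument. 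In short: the weight cannot be an $|x|$-like profile with all higher derivatives eventually trivial; it must be engineered so that its bilaplacian matches the target decay $(1+r)^{-\mu-1}$, and that is the step missing from your proposal.
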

In other words, the main obstruction to global dispersion for $V$
comes from $(\d_r V)_+$, which is the attractive contribution of $V$
in classical trajectories, while $(\d_r V)_-$ is the repulsive part,
which does not ruin the dispersion associated to $-\Delta$ (it may
 reinforce it, see e.g. \cite{CaDCDS}, but repulsive
potentials do not necessarily improve the dispersion, see \cite{GoVeVi06}). 
\begin{proof}
  The proof follows standard arguments, based on virial identities
  with a suitable weight. We resume the main steps of the
  computations, and give more details on the choice of the weight in
  our context. For a real-valued function $h(x)$, we compute, for $\psi$ solution
  to \eqref{eq:psi},
  \begin{equation*}
    \frac{d}{dt}\int h(x)|\psi(t,x)|^2dx = \IM \int \bar \psi(t,x) \nabla
    h(x)\cdot \nabla \psi(t,x)dx,
  \end{equation*}
  \begin{equation}
    \label{eq:viriel}
    \begin{aligned}
        \frac{d}{dt}\IM \int \bar \psi(t,x) \nabla
    h(x)\cdot \nabla \psi(t,x)dx &= \int \nabla \bar \psi(t,x)\cdot
    \nabla^2h(x)\nabla \psi(t,x)dx \\
-\frac{1}{4}\int |\psi(t,x)|^2 &\Delta^2
    h(x)dx -\int |\psi(t,x)|^2\nabla V\cdot \nabla h(x)dx\\
 &   +\frac{\l\si}{\si+1}\int |\psi(t,x)|^{2\si+2}\Delta h(x) dx. 
 \end{aligned}
   \end{equation}
In the case $V=0$, the standard choice is $h(x)=|x|$, for which
\begin{equation*}
  \nabla h=\frac{x}{|x|},\quad \nabla^2_{jk}h =
  \frac{1}{|x|}\(\delta_{jk}-\frac{x_jx_k}{|x|^2}\),\quad \Delta h\ge
  \frac{d-1}{h},\quad \text{and }\Delta^2 h\le 0\text{ for }d\ge 3. 
\end{equation*}
This readily yields Proposition~\ref{prop:Morawetz} in the repulsive case
$\d_r V\le 0$, since $\nabla h\in L^\infty$. 
\smallbreak

In the same spirit as in \cite{BaRuVe06}, we proceed by perturbation to
construct a suitable weight when the attractive part of the potential
is not too large. We seek a priori a radial weight, $h=h(|x|)\ge 0$, so we
have 
\begin{align*}
 & \Delta h = h'' +\frac{d-1}{r} h',\\
& \Delta^2 h =  h^{(4)}
  +2\frac{d-1}{r} h^{(3)} +\frac{(d-1)(d-3)}{r^2}h'' -
  \frac{(d-1)(d-3)}{r^3}h',\\
&\nabla^2_{jk} h = \frac{1}{r}\(\delta_{jk}-\frac{x_jx_k}{r^2}\) h'
+\frac{x_jx_k}{r^2}h''. 
\end{align*}
We construct a function $h$ such that $h',h''\ge 0$, so the condition
$\nabla^2 h\ge 0$ will remain. The goal is then to construct a radial
function $h$ such that the second line in \eqref{eq:viriel} is
non-negative, along with $\Delta h \ge \eta/|x|$ for some $\eta>0$.
\smallbreak

\noindent {\bf Case $d=3$.} In this case, the expression for $\Delta^2
h$ is simpler, and the above conditions read
\begin{align*}
 &\frac{1}{4} h^{(4)}
  +\frac{1}{r} h^{(3)} + \nabla V(x)\cdot \nabla h\le 0,\\
& h''+\frac{2}{r}h'\ge \frac{\eta}{r},\quad h',h''\ge 0. 
\end{align*}
Since we do not suppose a priori that $V$ is a radial potential, the
first condition is not rigorous. We actually use the fact that for
$h'\ge 0$, Assumption~\ref{hyp:V} implies
\begin{equation*}
  \nabla V(x)\cdot \nabla h \le \(\d_r V(x)\)_+ h'(r)\le
  \frac{M}{(1+r)^{\mu+1}}h'(r). 
\end{equation*}
To achieve our goal, it is therefore sufficient to require:
\begin{align}
 \label{eq:h1}&\frac{1}{4} h^{(4)}
  +\frac{1}{r} h^{(3)} +  \frac{M}{(1+r)^{\mu+1}}h' \le 0,\\
\label{eq:h2}& h''+\frac{2}{r}h'\ge \frac{\eta}{r},\quad h'\in L^\infty(\R_+), \
h',h''\ge 0. 
\end{align}
In view of \eqref{eq:h2}, we seek
\begin{equation*}
  h'(r) = \eta +\int_0^r h''(\rho)d\rho.
\end{equation*}
Therefore, if $h''\ge 0$ with $h''\in L^1(\R_+)$, \eqref{eq:h2} will
be automatically fulfilled. We now turn to \eqref{eq:h1}. Since we
want $h'\in L^\infty$, we may even replace $h'$ by a constant in
\eqref{eq:h1}, and solve, for $C>0$, the ODE
\begin{equation*}
  \frac{1}{4} h^{(4)}
  +\frac{1}{r} h^{(3)} +  \frac{C}{(1+r)^{\mu+1}}=0.
\end{equation*}
We readily have
\begin{equation*}
  h^{(3)}(r) = -\frac{4C}{r^4}\int_0^r\frac{\rho^4}{(1+\rho)^{\mu+1}}d\rho,
\end{equation*}
along with the properties $h^{(3)}(0)=0$, 
\begin{equation*}
  h^{(3)}(r)\Eq r \infty -\frac{k}{r^{\min (\mu,4)}},\quad \text{for some }k>0.
\end{equation*}
It is now natural to set
\begin{equation*}
  h''(r) = -\int_r^\infty h^{(3)}(\rho)d\rho,
\end{equation*}
so we have $h''\in C([0,\infty);\R_+)$ and
\begin{equation*}
  h''(r) \Eq r \infty \frac{\kappa}{r^{\min (\mu-1,3)}},\quad \text{for some }\kappa>0.
\end{equation*}
This function is indeed in $L^1$ if and only if $\mu>2$. We 
define $h$ by  $h(r)= \int_0^r h'(\rho)d\rho$,
\begin{equation}\label{eq:h3}
   h^{(3)}(r) = -\frac{K}{r^4}\int_0^r\frac{\rho^4}{(1+\rho)^{\mu+1}}d\rho,
\end{equation}
for some $K>0$, $h''$ and $h'$ being given by the above relations:
\eqref{eq:h2} is satisfied for any value of $K>0$, and \eqref{eq:h1}
boils down to an inequality of the form
\begin{equation}\label{eq:M}
  -\frac{K}{4} +M\(\eta +C(\mu)K\)\le 0,
\end{equation}
where $C(\mu)$ is proportional to 
\begin{equation*}
 \frac{1}{K} \|h'\|_{L^\infty} = \int_0^\infty \int_r^\infty
 \frac{1}{\rho^4}\int_0^\rho \frac{s^4}{(1+s)^{\mu+1}}dsd\rho dr.
\end{equation*}
We infer that \eqref{eq:h3} is satisfied for $K\gg \eta$, provided
that $M<\frac{1}{4C(\mu)}$. Note then that by construction, we may also
require
\begin{equation*}
  \frac{1}{4}\Delta^2 h +\nabla V\cdot \nabla h\le \frac{-c_0}{(1+|x|)^{\mu+1}},
\end{equation*}
for $c_0>0$ morally very small. 
\smallbreak

\noindent {\bf Case $d\ge 4$.} Resume the above reductions, pretending
that the last two terms in $\Delta^2 h$ are not present: \eqref{eq:h3}
just becomes
\begin{equation*}
   h^{(3)}(r) = -\frac{K}{r^{2d-2}}\int_0^r\frac{\rho^{2d-2}}{(1+\rho)^{\mu+1}}d\rho,
\end{equation*}
and we see that with $h''$ and $h'$ defined like before, we have
\begin{equation*}
  rh''-h'= -\eta- \int_0^r h'' +rh''.
\end{equation*}
Since this term is negative at $r=0$ and has a non-positive
derivative, we have $rh''-h'\le 0$, so finally $\Delta^2 h\le 0$. 
\end{proof}

We infer that $H$ has no eigenvalue. Indeed, if there were an $L^2$ solution
$\psi=\psi(x)$ 
to $H\psi =E\psi$, $E\in \R$, then $\psi\in
H^2(\R^d)$, and $\psi(x)e^{-iEt}$ would be an $H^1$
solution to \eqref{eq:psi-gen} for $\l=0$. This is contradiction with
the global integrability in time from \eqref{eq:morawetz}, so
$\si_{\rm pp}(H)=\emptyset$. 

\subsection{Strichartz estimates}
\label{sec:strichartz}

In
\cite[Proposition~3.1]{BaRuVe06}, it is proved that zero is
not a resonance of $H$, but with a definition of resonance which is
not quite the definition in \cite{RodnianskiSchlag}, which contains a
result that we want to use. So we shall resume the argument.

By definition (as in \cite{RodnianskiSchlag}), zero is a resonance of
$H$, if there is a distributional solution 
$\psi\not\in L^2$, such that  $\<x\>^{-s}\psi\in L^2(\R^d)$ for all
$s>\frac{1}{2}$, to
$H\psi=0$.  
\begin{corollary}
  Under the assumptions of Proposition~\ref{prop:Morawetz}, zero is not a
  resonance of $H$.
\end{corollary}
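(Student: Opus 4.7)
The plan is to argue by contradiction using a stationary version of the Morawetz multiplier identity underlying Proposition~\ref{prop:Morawetz}. Suppose $\psi$ is a zero resonance in the sense of Rodnianski--Schlag: $H\psi=0$ distributionally, $\psi\notin L^2$, yet $\langle x\rangle^{-s}\psi\in L^2(\R^d)$ for every $s>1/2$. Elliptic regularity first gives $\psi\in H^2_{\mathrm{loc}}$, and since $\frac12 \Delta\psi = V\psi$ with $V$ bounded, testing against $\langle x\rangle^{-2s}\bar\psi$ bootstraps $\langle x\rangle^{-s}\nabla\psi\in L^2$ for every $s>1/2$ as well. The objective is then to reuse the weight $h=h(|x|)$ constructed in the proof of Proposition~\ref{prop:Morawetz} -- which satisfies $h',h''\ge 0$, $h'\in L^\infty$, $\nabla^2 h\ge 0$, and
\begin{equation*}
-\tfrac14\Delta^2 h - \nabla V\cdot\nabla h \;\ge\; \frac{c_0}{(1+|x|)^{\mu+1}}
\end{equation*}
pointwise -- to force $\psi$ to be zero.

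The key step is the symmetric Morawetz identity applied to the stationary equation. Multiplying $H\psi=0$ by the (formally self-adjoint) multiplier $\nabla h\cdot\nabla\bar\psi+\tfrac12(\Delta h)\bar\psi$, taking real parts, and integrating over the ball $B_R$, I would obtain after standard integrations by parts
\begin{equation*}
\int_{B_R} \nabla\bar\psi\cdot \nabla^2 h\, \nabla\psi\,dx +\int_{B_R}\Bigl(-\tfrac14\Delta^2 h-\nabla V\cdot \nabla h\Bigr)|\psi|^2\,dx \;=\;\mathcal{B}_R,
\end{equation*}
where $\mathcal{B}_R$ is a sum of spherical integrals at $|x|=R$, each controlled by $g(R):=\int_{|x|=R}(|\psi|^2+|\nabla\psi|^2)\,d\sigma$ times uniformly bounded coefficients built from $h', h'', h'''$. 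The two terms on the left are nonnegative by the construction of $h$, and the second is bounded below by $c_0\int_{B_R}|\psi|^2(1+|x|)^{-\mu-1}\,dx$. If a sequence $R_n\to\infty$ with $\mathcal{B}_{R_n}\to 0$ can be produced, passing to the limit gives $\int_{\R^d}|\psi|^2(1+|x|)^{-\mu-1}\,dx\le 0$, hence $\psi\equiv 0$, the desired contradiction.

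The main obstacle is this boundary extraction. From $\langle x\rangle^{-s}\psi,\langle x\rangle^{-s}\nabla\psi\in L^2$ for every $s>1/2$ one only obtains $\int_0^\infty (1+R)^{-2s}g(R)\,dR<\infty$ for every $s>1/2$, which by itself does not yield a subsequence along which $g(R_n)\to 0$ -- the coefficients of the boundary terms do not decay in $R$ (for instance $h'(R)$ tends to a positive constant). The clean remedy is not to evaluate the identity at a single radius but to average it over $R\in [T,2T]$: Fubini converts the boundary contribution into $T^{-1}\int_T^{2T}\mathcal{B}_R\,dR$, which after inserting a further weight $(1+R)^{-2s}$ (with $s$ slightly larger than $1/2$) and rescaling is dominated by $T^{2s-1}\int_T^{2T}(1+R)^{-2s}g(R)\,dR$. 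Choosing $s$ appropriately and letting $T=T_n\to\infty$ along a sequence on which this quantity tends to $0$ eliminates the boundary term in the limit. One then concludes as indicated. The delicate technical work is in checking that every term in $\mathcal{B}_R$ is genuinely of this averaged type, which boils down to the explicit asymptotics of $h, h', h'', h'''$ computed in the proof of Proposition~\ref{prop:Morawetz}.
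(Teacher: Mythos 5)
Your plan --- reuse the weight $h$ from the proof of Proposition~\ref{prop:Morawetz} in a stationary virial identity to force $\psi\equiv 0$ --- is exactly the paper's strategy, and the leading ideas (elliptic regularity, the multiplier $\nabla h\cdot\nabla\bar\psi+\tfrac12(\Delta h)\bar\psi$, the sign conditions on $h$) are all correct. However, there is a genuine gap in the step you yourself flag as the main obstacle, and the averaging remedy you propose does not close it. You establish only the \emph{weighted} bound $\langle x\rangle^{-s}\nabla\psi\in L^2$ for every $s>1/2$. From $\int_0^\infty (1+R)^{-2s}g(R)\,dR<\infty$ for $s>1/2$ one can extract a sequence $R_n$ along which $R_n^{1-2s}g(R_n)\to 0$, but since $2s-1>0$ this permits $g(R_n)$ to grow slowly; and in the averaged version, the factor $T^{2s-1}\to\infty$ is not offset by any quantified decay rate of the tail $\int_T^{2T}(1+R)^{-2s}g(R)\,dR\to 0$. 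Concretely, $g\equiv 1$ satisfies your weighted integrability hypothesis for every $s>1/2$, yet neither $g(R_n)\to0$ nor $T^{2s-1}\int_T^{2T}(1+R)^{-2s}g\,dR\to 0$ along any sequence. Because the coefficients $h'(R)$ in the dominant boundary terms tend to a \emph{positive} constant, the boundary contribution $\mathcal{B}_R$ genuinely does not go to zero on this information alone.

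The missing step in the paper is a sharper a priori bound: one can upgrade to $\nabla\psi\in L^2(\R^d)$ \emph{without weight}. Indeed $\Delta\psi = 2V\psi$ and Assumption~\ref{hyp:V} give $\langle x\rangle^{\mu-s}\Delta\psi\in L^2$ for all $s>1/2$, and then
\begin{equation*}
\int_{\R^d}|\nabla\psi|^2 = -\int_{\R^d}\psi\,\Delta\psi
= -\int_{\R^d} \bigl(\langle x\rangle^{-1}\psi\bigr)\bigl(\langle x\rangle\,\Delta\psi\bigr)
\end{equation*}
is finite (take $s=1$; here $\mu>2$ is used, giving $\mu-1>1$). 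Once $\nabla\psi\in L^2$, the function $\varphi=\psi\Delta h+2\nabla\psi\cdot\nabla h$ belongs to $H^1(\R^d)$ (using $\nabla h\in L^\infty$, $\nabla^2 h=\O(\langle x\rangle^{-1})$, $\nabla^3 h=\O(\langle x\rangle^{-2})$), so it is an admissible test function in the variational identity $\tfrac12\int\nabla\varphi\cdot\nabla\psi+\int V\varphi\psi=0$; integration by parts is then justified on all of $\R^d$ with no boundary contribution, and the virial identity gives $\int\psi^2(1+|x|)^{-\mu-1}\le 0$ directly. In short: rather than fighting the boundary terms by averaging, first promote the weighted gradient bound to an unweighted one using the decay of $V$; this is exactly what makes the rest of your outline go through.
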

\begin{proof}
  Suppose that zero is a resonance of $H$. Then by definition, we
  obtain a stationary  distributional solution of \eqref{eq:psi-gen} (case
  $\l=0$), $\psi= \psi(x)$, and we may assume that it is
  real-valued. Since $\Delta \psi = 
  2V\psi$, Assumption~\ref{hyp:V} implies
  \begin{equation*}
    \<x\>^{\mu-s}\Delta \psi\in L^2(\R^d),\quad \forall s>\frac{1}{2}. 
  \end{equation*}
This implies that $\nabla \psi\in L^2$, by taking for instance $s=1$ in
\begin{equation*}
  \int|\nabla \psi|^2 = -\int \<x\>^{-s}\psi \<x\>^s\Delta \psi.
\end{equation*}
By definition, for all test function $\varphi$,
\begin{equation}\label{eq:variat}
  \frac{1}{2}\int_{\R^d}\nabla \varphi(x) \cdot \nabla \psi(x)dx
  +\int_{\R^d}V(x)\varphi(x)\psi(x)dx =0.
\end{equation}
Let $h$ be the weight constructed in the proof of
Proposition~\ref{prop:Morawetz}, and consider
\begin{equation*}
  \varphi = \psi\Delta h +2\nabla \psi\cdot \nabla h. 
\end{equation*}
Since $\nabla h\in L^\infty$, $\nabla^2 h(x)=\O(\<x\>^{-1})$, and
$\nabla^3 h(x)= \O(\<x\>^{-2})$, we see
that $\varphi \in H^1$, and that this choice is allowed in
\eqref{eq:variat}. Integration by parts then yields \eqref{eq:viriel}
(where the left hand side is now zero):
\begin{equation*}
  0=\int \nabla \psi\cdot \nabla^2h \nabla \psi -\frac{1}{4}\int
  \psi^2\Delta^2 h -\int \psi^2 \nabla V\cdot \nabla h.
\end{equation*}
By construction of $h$, this implies
\begin{equation*}
  \int_{\R^d}\frac{\psi(x)^2}{(1+|x|)^{\mu+1}}dx\le 0,
\end{equation*}
hence $\psi\equiv 0$. 
\end{proof}
Therefore,
\cite[Theorem~1.4]{RodnianskiSchlag} implies non-endpoint global in
time Strichartz estimates.  In the case $d=3$, we know from
\cite{Go06} that (in view of the above spectral properties)
\begin{equation*}
  \|e^{-itH}\|_{L^1\to L^\infty}\le C |t|^{-d/2},\quad \forall t\not
  =0,
\end{equation*}
a property which is stronger than Strichartz estimates, and yields the
endpoint Strichartz estimate missing in \cite{RodnianskiSchlag}, from
\cite{KT}. On the other 
hand, this dispersive estimate does not seem to be known under
Assumption~\ref{hyp:V} with $\mu>2$ when $d\ge 4$: stronger assumptions
are always present so far (see e.g. \cite{CaCuVo09,ErGr10}). However,
endpoint Strichartz estimates for $d\ge 4$ are a consequence of
\cite[Theorem~1.1]{AFVV10}, under the assumptions of
Proposition~\ref{prop:Morawetz}. 

\begin{proposition}\label{prop:StrichartzRS}
Let $d\ge 3$. Under the assumptions of
Proposition~\ref{prop:Morawetz}, for all $(q,r)$ such that 
  \begin{equation}\label{eq:adm}
    \frac{2}{q}=d\(\frac{1}{2}-\frac{1}{r}\),\quad 2<q\le \infty,
  \end{equation}
there exists $C=C(q,d)$ such that 
\begin{equation*}
  \|e^{-itH} f\|_{L^q(\R;L^r(\R^d))}\le C \|f\|_{L^2(\R^d)},\quad
  \forall f\in L^2(\R^d). 
\end{equation*}
\end{proposition}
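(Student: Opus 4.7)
The plan is to assemble the proposition from three external inputs, using the spectral information established in the previous subsections as the verification of their hypotheses. I would organize it as follows.

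First, I would collect the spectral facts needed to apply the cited black-box theorems. From the discussion following Proposition~\ref{prop:Morawetz}, $H$ has no singular continuous spectrum (Mourre theory), no eigenvalues (the Morawetz weighted integrability would contradict $\psi(x)e^{-iEt}$ being a global $H^1$ solution), and by the preceding corollary, zero is not a resonance of $H$ in the sense of \cite{RodnianskiSchlag}. Hence the spectrum of $H$ is purely absolutely continuous and the low-energy assumption of Rodnianski--Schlag is met.

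Next, for the non-endpoint range $2<q\le\infty$ with $(q,r)$ admissible in the sense of \eqref{eq:adm}, I would invoke \cite[Theorem~1.4]{RodnianskiSchlag} directly: under Assumption~\ref{hyp:V} with $\mu>2$ (which in particular ensures sufficient polynomial decay of $V$ to fit into their framework) together with the absence of eigenvalues and resonance at zero, one obtains
\begin{equation*}
  \|e^{-itH}f\|_{L^q(\R;L^r(\R^d))}\le C\|f\|_{L^2(\R^d)}.
\end{equation*}
This covers every admissible pair except the endpoint $q=2$, $r=2d/(d-2)$.

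For the endpoint, I would split on the dimension. When $d=3$, the stronger pointwise dispersive bound $\|e^{-itH}\|_{L^1\to L^\infty}\lesssim |t|^{-3/2}$ of Goldberg \cite{Go06} applies once one knows the spectrum is purely absolutely continuous and zero is regular; combining this with the $L^2$ conservation of $e^{-itH}$ and the abstract Keel--Tao machinery \cite{KT} produces the endpoint Strichartz estimate. When $d\ge 4$, Goldberg's estimate is not known under Assumption~\ref{hyp:V} alone, so instead I would appeal to \cite[Theorem~1.1]{AFVV10}, whose hypotheses are precisely of the type imposed in Proposition~\ref{prop:Morawetz} (smallness of the attractive part of $V$ together with the decay of $V$), and which provides the endpoint Strichartz estimate directly.

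The only real obstacle is of a bookkeeping nature: checking that the spectral and decay hypotheses used in \cite{RodnianskiSchlag}, \cite{Go06}, and \cite{AFVV10} are indeed covered by Assumption~\ref{hyp:V} with $\mu>2$ together with the smallness of $(\partial_r V)_+$. Since the absence of eigenvalues and the non-resonance at zero have been established in this very section, and the short-range decay is built into Assumption~\ref{hyp:V}, no further analytic work is required beyond assembling these inputs.
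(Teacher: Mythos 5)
Your proposal matches the paper's argument exactly: the paper also derives the non-endpoint estimates from Rodnianski--Schlag (after establishing absence of eigenvalues, purely absolutely continuous spectrum, and zero not being a resonance), obtains the endpoint for $d=3$ from Goldberg's dispersive estimate combined with Keel--Tao, and cites AFVV for the endpoint in $d\ge 4$. Nothing to add.
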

It is classical that this homogeneous Strichartz estimate, a duality
argument and Christ-Kiselev's Theorem imply the inhomogeneous
counterpart. For two admissible pairs $(q_1,r_1)$ and $(q_2,r_2)$
(that is, satisfying \eqref{eq:adm}), there exists $C_{q_1,q_2}$
independent of the time interval $I$ such
that if we denote by
\begin{equation*}
  R(F)(t,x) = \int_{I\cap \{s\le t\}} e^{-i(t-s)H}F(s,x)ds,
\end{equation*}
we have
\begin{equation*}
  \|R(F)\|_{L^{q_1}(I;L^{r_1}(\R^d))}\le
  C_{q_1,q_2}\|F\|_{L^{q_2'}(I;L^{r_2'}(\R^d))},\quad \forall F\in
  L^{q_2'}(I;L^{r_2'}(\R^d)). 
\end{equation*}
\smallbreak

Note that the assumption $\mu>2$ seems essentially sharp in order to have
global in time Strichartz estimates. The result remains true for $\mu
=2$ (\cite{BPST03,BPST04}), but in \cite{GoVeVi06}, the authors
prove that for repulsive potentials which are homogeneous of degree
smaller than $2$, global Strichartz estimates fail to exist.

\section{Quantum scattering}
\label{sec:quant}

In this section, we prove Theorem~\ref{theo:scatt-quant}. Since the
dependence upon $\eps$ is not measured in
Theorem~\ref{theo:scatt-quant}, we shall 
consider the case $\eps=1$, corresponding to 
\begin{equation}
  \label{eq:psi}
  i\d_t \psi +\frac{1}{2}\Delta \psi = V\psi + |\psi|^{2\si}\psi.
\end{equation}
We split the proof of Theorem~\ref{theo:scatt-quant} into two
steps. First, we solve the Cauchy problem with data prescribed at
$t=-\infty$, that is, we show the existence of wave operators. Then,
given an initial datum at $t=0$, we show that the (global) solution to
\eqref{eq:psi} behaves asymptotically like a free solution, which
corresponds to asymptotic completeness. 
\smallbreak

For each of these two steps, we first show that the nonlinearity is
negligible for large time, and then recall that the potential is
negligible for large time (linear scattering). This means that for any $\tilde \psi_-\in
H^1(\R^d)$, there exists  a unique $\psi\in 
  C(\R;H^1(\R^d))$ solution to \eqref{eq:psi}  such that
  \begin{equation*}
    \|\psi(t)-e^{-itH}\tilde \psi_-\|_{H^1(\R^d)}\Tend t {-
      \infty} 0,
  \end{equation*}
and for any $\varphi\in H^1(\R^d)$, there exist a unique  $\psi\in
  C(\R;H^1(\R^d))$ solution to \eqref{eq:psi} and a unique $\tilde\psi_+\in
  H^1(\R^d)$ such that
\begin{equation*}
    \|\psi(t)-e^{-itH}\tilde \psi_+\|_{H^1(\R^d)}\Tend t {+
      \infty} 0.
  \end{equation*}
Then, we recall that the potential $V$ is negligible for large
time. We will adopt the following notations for the propagators,
\begin{equation*}
  U(t)=e^{i\frac{t}{2}\Delta},\quad U_V(t)= e^{-itH}. 
\end{equation*}

In order to construct wave operators which show that the nonlinearity
can be neglected for large time, we shall work with an $H^1$
regularity, on the Duhamel's formula associated to \eqref{eq:psi} in
terms of $U_V$, with a prescribed asymptotic behavior as $t\to
-\infty$:
\begin{equation}
  \label{eq:duhamel-}
  \psi(t) = U_V(t)\tilde \psi_- -i\int_{-\infty}^t
  U_V(t-s)\(|\psi|^{2\si}\psi(s)\)ds. 
\end{equation}
Applying the gradient to this formulation brings up the problem of
non-commutativity with $U_V$. The worst term is actually the linear
one, $U_V(t)\tilde \psi_-$, since
\begin{equation*}
  \nabla \(U_V(t)\tilde \psi_-\) = U_V(t)\nabla \tilde \psi_-
  -i\int_0^t U_V(t-s)\((U_V(s)\tilde \psi_-)\nabla V\)ds.
\end{equation*}
Since the construction of wave operators relies on the use of
Strichartz estimates, it would be necessary to have an estimate of
\begin{equation*}
 \left\|\nabla \(U_V(t)\tilde \psi_-\)\right\|_{L^qL^r}
\end{equation*}
in terms of $\psi_-$, for admissible pairs
$(q,r)$. Proposition~\ref{prop:StrichartzRS} yields
\begin{equation*}
  \left\|\nabla \(U_V(t)\tilde \psi_-\)\right\|_{L^qL^r} \lesssim \|\nabla \tilde
  \psi_-\|_{L^2} + \|(U_V(t)\tilde \psi_-)\nabla V\|_{L^{\tilde
      q'}L^{\tilde r'}},
\end{equation*}
for any admissible pair $(\tilde q,\tilde r)$. In the last factor,
time is present only in the term $U_V(t)\tilde \psi_-$, so to be able
to use Strichartz estimates again, we need to consider $\tilde
q=2$, in which case $\tilde r=2^*:=\frac{2d}{d-2}$:
\begin{equation*}
  \|(U_V(t)\tilde \psi_-)\nabla V\|_{L^2L^{{2^*}'}}\le \|U_V(t)\tilde
  \psi_-\|_{L^2L^{2^*}}\|\nabla V\|_{L^{d/2}},
\end{equation*}
where Assumption~\ref{hyp:V} implies $\nabla V\in L^{d/2}(\R^d)$ as
soon as $\mu>1$. Using the endpoint Strichartz estimate from
Proposition~\ref{prop:StrichartzRS}, we have
\begin{equation*}
  \|U_V(t)\tilde
  \psi_-\|_{L^2L^{2^*}} \lesssim \|\tilde \psi_-\|_{L^2}, 
\end{equation*}
and we have:
\begin{lemma}\label{lem:stri2}
  Let $d\ge 3$. Under the assumptions of
  Proposition~\ref{prop:Morawetz}, for all admissible pair $(q,r)$, 
  \begin{equation*}
    \|e^{-itH}f\|_{L^q(\R;W^{1,r}(\R^d))}\lesssim \|f\|_{H^1(\R^d)}. 
  \end{equation*}
\end{lemma}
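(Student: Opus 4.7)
The plan is to realise $\nabla U_V(t) f$ as the solution of an inhomogeneous Schr\"odinger equation driven by $\nabla V$, and then to close the estimate by combining two applications of the Strichartz bounds of Proposition~\ref{prop:StrichartzRS}. Since $[\nabla,H]=\nabla V$, the function $v=\nabla U_V(t)f$ satisfies $i\d_t v - H v = \nabla V \cdot U_V(t) f$ with initial value $\nabla f$, hence by Duhamel
\[
\nabla U_V(t) f = U_V(t)\nabla f - i\int_0^t U_V(t-s)\bigl(\nabla V \cdot U_V(s) f\bigr) ds.
\]
The unweighted part $\|U_V(t)f\|_{L^q(\R;L^r)}\lesssim\|f\|_{L^2}$ is already in Proposition~\ref{prop:StrichartzRS}, so it suffices to bound the two terms above in $L^q(\R;L^r)$ by $\|f\|_{H^1}$.

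For the first term, I would apply Proposition~\ref{prop:StrichartzRS} directly to the initial datum $\nabla f\in L^2$. For the Duhamel term, I would invoke the inhomogeneous Strichartz estimate recalled right after Proposition~\ref{prop:StrichartzRS}, using the endpoint dual admissible pair $(\tilde q,\tilde r)=(2,2^*)$ with $2^*=\tfrac{2d}{d-2}$:
\[
\Bigl\|\int_0^t U_V(t-s)F(s)\,ds\Bigr\|_{L^q(\R;L^r)}\lesssim \|F\|_{L^2(\R;L^{(2^*)'})}.
\]
Applied to $F(s)=\nabla V\cdot U_V(s)f$, H\"older's inequality in space (with $\tfrac{1}{(2^*)'}=\tfrac{2}{d}+\tfrac{1}{2^*}$) and a second use of the endpoint homogeneous estimate give
\[
\|F\|_{L^2 L^{(2^*)'}}\le \|\nabla V\|_{L^{d/2}}\,\|U_V(\cdot)f\|_{L^2 L^{2^*}}\lesssim \|\nabla V\|_{L^{d/2}}\,\|f\|_{L^2}.
\]
Assumption~\ref{hyp:V} with $\mu>1$ guarantees $|\nabla V(x)|\lesssim (1+|x|)^{-\mu-1}$, which lies in $L^{d/2}(\R^d)$ precisely because $(\mu+1)\tfrac{d}{2}>d$ holds exactly when $\mu>1$.

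The only delicate point is that both uses of Strichartz are at the endpoint $(q,r)=(2,2^*)$, which is why Section~\ref{sec:strichartz} was arranged to provide that endpoint (via the dispersive bound of \cite{Go06} and \cite{KT} when $d=3$, and via \cite{AFVV10} when $d\ge 4$); without it, no admissible pair allows one to absorb $\nabla V$ through a single H\"older in space while keeping the remaining time--space norm on $U_V(s)f$ Strichartz--admissible. Once the endpoint is in hand the argument closes with no smallness hypothesis on $f$, and summing the bound on $\|\nabla U_V f\|_{L^q L^r}$ just obtained with $\|U_V f\|_{L^q L^r}\lesssim\|f\|_{L^2}$ from Proposition~\ref{prop:StrichartzRS} yields the full $L^q(\R;W^{1,r})$ estimate.
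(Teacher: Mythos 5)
Your argument coincides with the paper's: the paper derives Lemma~\ref{lem:stri2} in exactly this way, writing $\nabla U_V(t)f$ via Duhamel with the commutator source $(\nabla V)\,U_V(s)f$, applying the inhomogeneous Strichartz bound dualized against the endpoint pair $(2,2^*)$, using H\"older with $\nabla V\in L^{d/2}$ (from $\mu>1$), and closing with the endpoint homogeneous estimate of Proposition~\ref{prop:StrichartzRS}. Your remark about why the endpoint is unavoidable likewise matches the paper's discussion.
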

We shall rather use a vector-field, for we believe this approach may be
interesting in other contexts.

\subsection{Vector-field}
\label{sec:vector-field}

 We
introduce a vector-field which naturally commutes with $U_V$, and
is comparable with the gradient. 
\smallbreak

From Assumption~\ref{hyp:V}, $V$ is bounded, so there exists $c_0\ge
0$ such that $V+c_0\ge 0$. We shall consider the operator
\begin{equation*}
  A = \sqrt{H+c_0}=\sqrt{-\frac{1}{2}\Delta +V+c_0}.
\end{equation*}
\begin{lemma}\label{lem:A}
 Let $d\ge 3$, and $V$ satisfying Assumption~\ref{hyp:V} with
 $V+c_0\ge 0$. For every $1<r<\infty$, there exists $C_r,K_r$  such
 that for all $f\in    W^{1,r}(\R^d)$,
  \begin{equation}
    \label{eq:A}
    \|A f\|_{L^r}\le C_r
    \(\|f\|_{L^r}+\|\nabla f\|_{L^r}\)\le K_r \(\|f\|_{L^r}+\|A f\|_{L^r}\) .
  \end{equation}
\end{lemma}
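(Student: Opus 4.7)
The strategy is to compare $A$ with the constant-coefficient operator $B:=\sqrt{-\tfrac12\Delta+c_0}$. Since the ratio $(\tfrac12|\xi|^2+c_0)^{1/2}/(1+|\xi|^2)^{1/2}$ and its reciprocal are smooth bounded functions on $\R^d$ satisfying Mikhlin derivative bounds, the Mikhlin--H\"ormander theorem yields the classical equivalence
\begin{equation*}
  \|Bf\|_{L^r}\approx \|(1-\Delta)^{1/2}f\|_{L^r}\approx \|f\|_{L^r}+\|\nabla f\|_{L^r}\qquad (1<r<\infty).
\end{equation*}
Consequently, \eqref{eq:A} follows once $A-B$ is shown to extend to a bounded operator on $L^r(\R^d)$: one then writes $A=B+(A-B)$ for the first inequality and $B=A-(A-B)$ for the second.

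To control $A-B$, I would invoke the classical integral representation of the square root of a nonnegative self-adjoint operator,
\begin{equation*}
  \sqrt X=\frac{1}{\pi}\int_0^\infty \lambda^{-1/2}\,\frac{X}{\lambda+X}\,d\lambda,
\end{equation*}
applied to $A^2=H+c_0$ and $B^2=-\tfrac12\Delta+c_0$. Since $A^2-B^2=V$ is just multiplication by a bounded function, the second resolvent identity
\begin{equation*}
  (\lambda+B^2)^{-1}-(\lambda+A^2)^{-1}=(\lambda+A^2)^{-1}\,V\,(\lambda+B^2)^{-1}
\end{equation*}
transforms the subtraction of the two square-root formulas into
\begin{equation*}
  A-B=\frac{1}{\pi}\int_0^\infty \sqrt\lambda\;(\lambda+A^2)^{-1}\,V\,(\lambda+B^2)^{-1}\,d\lambda.
\end{equation*}
Three $L^r$-bounds control the integrand: multiplication by $V$ has operator norm at most $\|V\|_{L^\infty}$ (finite by Assumption~\ref{hyp:V}); the identity $e^{-tB^2}=e^{-tc_0}e^{t\Delta/2}$ together with the fact that the free heat semigroup is an $L^r$-contraction gives $\|(\lambda+B^2)^{-1}\|_{L^r\to L^r}\le(\lambda+c_0)^{-1}$; and finally, the Feynman--Kac representation combined with the pointwise inequality $V+c_0\ge 0$ yields the domination $|e^{-tA^2}f(x)|\le (e^{t\Delta/2}|f|)(x)$, hence $\|e^{-tA^2}\|_{L^r\to L^r}\le 1$ and, by Laplace transform, $\|(\lambda+A^2)^{-1}\|_{L^r\to L^r}\le 1/\lambda$. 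Assembling the three estimates,
\begin{equation*}
  \|(A-B)f\|_{L^r}\le\frac{\|V\|_{L^\infty}}{\pi}\|f\|_{L^r}\int_0^\infty\frac{d\lambda}{\sqrt\lambda\,(\lambda+c_0)}=\frac{\|V\|_{L^\infty}}{\sqrt{c_0}}\|f\|_{L^r},
\end{equation*}
which is the required bound. Note that the integral converges at both endpoints: like $\lambda^{-1/2}$ near zero (where the $B^2$-resolvent is tame because $c_0>0$) and like $\lambda^{-3/2}$ at infinity.

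The main technical point is the $L^r$-bound on $(\lambda+A^2)^{-1}$. On $L^2$ it is a triviality of the spectral theorem, but for $r\neq 2$ the argument relies crucially on the \emph{pointwise} sign condition $V+c_0\ge 0$, which is exactly what allows the Feynman--Kac formula to dominate $e^{-tA^2}$ by the free heat semigroup. Were the sign condition only available in the quadratic-form sense, one would instead need a genuine spectral multiplier theorem for $H$ of Mikhlin type (in the spirit of Hebisch or Duong--Ouhabaz--Sikora) based on Gaussian upper bounds for its heat kernel, which is considerably more delicate. By contrast, the decay rate $\mu$ of $V$ plays no role in this lemma: only the boundedness of $V$ is used.
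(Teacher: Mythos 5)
Your proof is correct, but it follows a genuinely different route from the paper's. The paper works on the symbolic level: it observes that
\begin{equation*}
  a(x,\xi)=\sqrt{\frac{\tfrac12|\xi|^2+V(x)+c_0}{1+|\xi|^2}},\qquad
  b(x,\xi)=\sqrt{\frac{|\xi|^2}{\tfrac12|\xi|^2+V(x)+c_0+1}}
\end{equation*}
are symbols of order zero (using that \emph{all} derivatives of $V$ are bounded, from Assumption~\ref{hyp:V}), and then invokes Calder\'on--Zygmund $L^r$-boundedness of order-zero pseudo-differential operators. Your argument instead works at the operator level: compare $A$ with the constant-coefficient operator $B=\sqrt{-\tfrac12\Delta+c_0}$, reduce to showing $A-B$ is bounded on $L^r$ via the Balakrishnan-type integral representation of the square root and the second resolvent identity, and control the $L^r$-norm of $(\lambda+A^2)^{-1}$ by Feynman--Kac domination of $e^{-tA^2}$ by the free heat semigroup. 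The trade-off is instructive: your approach uses only $V\in L^\infty$ together with the pointwise sign condition $V+c_0\ge 0$ (no smoothness of $V$ at all), while the paper's approach exploits the $C^\infty$ structure of $V$ but does not require the sign hypothesis at the symbol level. Moreover, your single comparison estimate yields both inequalities in \eqref{eq:A} at once, whereas the paper notes that its first approach (based on Stein interpolation, \`a la \cite{AFVV10}) would only give the second inequality for $4/3<r<4$, and switches to the $\Psi$DO calculus precisely to remove that restriction.

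One small point to tighten: your argument requires $c_0>0$ strictly, since both the Mikhlin equivalence $\|Bf\|_{L^r}\approx\|f\|_{L^r}+\|\nabla f\|_{L^r}$ and the convergence at $\lambda\to 0$ of $\int_0^\infty \lambda^{-1/2}(\lambda+c_0)^{-1}\,d\lambda$ fail when $c_0=0$. The paper only asserts ``there exists $c_0\ge 0$ with $V+c_0\ge 0$,'' but one may of course always enlarge $c_0$ to a strictly positive constant; it is worth saying so explicitly, since the lemma is stated for the fixed operator $A=\sqrt{H+c_0}$.
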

\begin{proof}
  The first inequality is very close to \cite[Theorem~1.2]{AFVV10}, and the
  proof can readily be adapted. On the other hand, the second
  inequality would require the restriction $4/3<r<4$ if we followed 
  the same approach, based on  Stein's interpolation theorem (a
  similar approach for followed in e.g. \cite{KiViZh09}). We
  actually take  advantage of the smoothness of the potential $V$ to
  rather apply Calder\'on--Zygmund result on the action of
  pseudo-differential operators. 
\smallbreak

We readily check that the two functions
\begin{equation*}
  a(x,\xi)=
  \sqrt{\frac{\frac{|\xi|^2}{2}+V(x)+c_0}{1+|\xi|^2}},\quad
  b(x,\xi) = \sqrt{\frac{|\xi|^2}{\frac{|\xi|^2}{2}+V(x)+c_0+1}} ,
\end{equation*}
are symbols of order zero, in the sense
that they satisfy
\begin{equation*}
  |\d_x^\alpha\d_\xi^\beta a(x,\xi)| +  |\d_x^\alpha\d_\xi^\beta
  b(x,\xi)| \le C_{\alpha,\beta}\<\xi\>^{-|\beta|},
\end{equation*}
for all $\alpha,\beta\in \N^d$. This implies that the
pseudo-differential operators of symbol 
$a$ and $b$, respectively, are bounded on $L^r(\R^d)$, for
all $1<r<\infty$; see e.g. \cite[Theorem~5.2]{Taylor3}. In the case of
$a$, this yields the first inequality in \eqref{eq:A}, and in the case of $b$, this
yields the second inequality. 
\end{proof}

\subsection{Wave operators}
\label{sec:wave-op}

With the tools presented in the previous section, we can prove the
following result by adapting the standard proof of the case $V=0$, as
established in \cite{GV85}.
\begin{proposition}\label{prop:waveop-quant}
  Let $d\ge 3$, $\frac{2}{d}\le \si<\frac{2}{d-2}$, and $V$ satisfying
  Assumption~\ref{hyp:V} for some $\mu>2$. For
  all $\tilde \psi_-\in H^1(\R^d)$, there exists a unique 
$$\psi\in
  C((-\infty,0];H^1(\R^d))\cap
  L^{\frac{4\si+4}{d\si}}((-\infty,0);L^{2\si+2}(\R^d))$$
 solution to    \eqref{eq:psi}  such that 
  \begin{equation*} 
    \|\psi(t)-e^{-it H}\tilde\psi_-\|_{H^1(\R^d)}\Tend t {-
      \infty} 0.
  \end{equation*} 
\end{proposition}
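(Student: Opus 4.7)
The plan is to adapt the classical Ginibre--Velo fixed-point construction of wave operators to the potential setting, using Proposition~\ref{prop:StrichartzRS}, Lemma~\ref{lem:stri2}, and the vector field $A=\sqrt{H+c_0}$ from Lemma~\ref{lem:A}. Concretely, I would solve the Duhamel equation
\begin{equation*}
\psi(t)=U_V(t)\tilde\psi_- -i\int_{-\infty}^t U_V(t-s)\bigl(|\psi|^{2\si}\psi(s)\bigr)ds
\end{equation*}
by a contraction on an interval $(-\infty,-T]$ with $T$ large. The natural Strichartz pair is $(\gamma,\rho)=\bigl(\tfrac{4\si+4}{d\si},2\si+2\bigr)$, which is admissible since $\si<2/(d-2)$ and satisfies $H^1\hookrightarrow L^\rho$. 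The working space would be
\begin{equation*}
X_T=\bigl\{\psi\in L^\infty((-\infty,-T];H^1)\cap L^\gamma((-\infty,-T];W^{1,\rho}):\|\psi\|_{X_T}\le R\bigr\},
\end{equation*}
equipped with the weaker distance $\|\psi-\tilde\psi\|_{L^\gamma L^\rho}+\|\psi-\tilde\psi\|_{L^\infty L^2}$ (with respect to which it is complete). The smallness needed to close the contraction comes from the fact that, by global Strichartz (Lemma~\ref{lem:stri2}) and dominated convergence, $\|U_V(\cdot)\tilde\psi_-\|_{L^\gamma((-\infty,-T];W^{1,\rho})}\to 0$ as $T\to\infty$.

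\textbf{Handling the $H^1$ regularity with the potential.} The main obstacle is that $\nabla$ does not commute with $U_V$, so Strichartz estimates do not directly control $\nabla\psi$. This is precisely what motivates the introduction of $A$ in Section~\ref{sec:vector-field}: $A$ commutes with $U_V$ (since it is a function of $H$), and Lemma~\ref{lem:A} gives $\|Af\|_{L^r}\simeq \|f\|_{L^r}+\|\nabla f\|_{L^r}$ for all $1<r<\infty$. I would apply $A$ to Duhamel, use Strichartz on $U_V$ to bound $\|A\psi\|$ in the relevant norms, and then translate back to $\|\nabla\psi\|$ via Lemma~\ref{lem:A}. At the level of the nonlinearity, this requires a chain-rule-type bound of the shape
\begin{equation*}
\bigl\||\psi|^{2\si}\psi\bigr\|_{L^{\gamma'}W^{1,\rho'}}\lesssim \|\psi\|_{L^{(2\si+1)\gamma'}L^\rho}^{2\si}\,\|\psi\|_{L^\gamma W^{1,\rho}},
\end{equation*}
obtained by H\"older's inequality in space (with $\rho=2\si+2$) and the standard chain rule for $\nabla$, then re-expressed through $A$ using Lemma~\ref{lem:A} at the cost of a harmless zeroth-order correction. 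The first factor is controlled by interpolating between $L^\infty L^\rho$ (bounded by $\|\psi\|_{L^\infty H^1}$ through Sobolev embedding) and $L^\gamma L^\rho$; the condition $\si\ge 2/d$ is exactly what ensures that the interpolation exponent $\theta=\gamma/((2\si+1)\gamma')$ lies in $(0,1]$, so that the resulting bound carries a positive power of $\|\psi\|_{L^\gamma L^\rho}$, which is small on $(-\infty,-T]$.

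\textbf{Contraction, extension, and convergence.} Combining the above estimates with Strichartz gives $\|\Phi(\psi)\|_{X_T}\le C\|\tilde\psi_-\|_{H^1}+C R^{2\si+1}\eta(T)$ where $\eta(T)\to 0$ as $T\to\infty$, and similarly for the Lipschitz estimate on $\Phi(\psi)-\Phi(\tilde\psi)$ in the weak distance. Choosing $R=2C\|\tilde\psi_-\|_{H^1}$ and then $T$ large enough yields a unique fixed point $\psi\in X_T$. This solution is then extended to $(-\infty,0]$ by the standard global $H^1$-well-posedness for the defocusing $H^1$-subcritical NLS with smooth bounded potential (via mass and energy conservation); uniqueness in $C((-\infty,0];H^1)\cap L^\gamma_{\rm loc}L^\rho$ follows from the same nonlinear estimates applied on bounded intervals together with a continuity argument. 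Finally, the convergence $\|\psi(t)-U_V(t)\tilde\psi_-\|_{H^1}\Tend t {-\infty}0$ is immediate from Duhamel: the remainder $-i\int_{-\infty}^t U_V(t-s)(|\psi|^{2\si}\psi(s))ds$ is bounded in $H^1$ by a dual Strichartz norm of the nonlinearity restricted to $(-\infty,t]$, which vanishes as $t\to-\infty$ because $\psi\in L^\gamma((-\infty,-T];W^{1,\rho})$.
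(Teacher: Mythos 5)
Your strategy and most of the ingredients are exactly those of the paper: Duhamel's formula on a tail interval $(-\infty,-T]$ with the $L^2$-admissible pair $(\gamma,\rho)=\bigl(\frac{4\si+4}{d\si},2\si+2\bigr)$; the operator $A=\sqrt{H+c_0}$ to circumvent the non-commutation of $\nabla$ with $U_V$, with Lemma~\ref{lem:A} to pass back and forth between $A$ and $\nabla$; H\"older in space plus a time interpolation between $L^\gamma L^\rho$ and $L^\infty L^\rho$, with $\si\ge 2/d$ guaranteeing an interpolation exponent $\theta\in(0,1]$; the weak metric $\|\cdot\|_{L^\gamma L^\rho}+\|\cdot\|_{L^\infty L^2}$ for completeness; and global $H^1$ well-posedness to extend from $(-\infty,-T]$ to $(-\infty,0]$. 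This is the paper's proof in outline.

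There is, however, a genuine gap in the way you set up the fixed point. You take $X_T$ to be the ball of fixed radius $R=2C\|\tilde\psi_-\|_{H^1}$ in $L^\infty H^1\cap L^\gamma W^{1,\rho}$ and then assert $\|\Phi(\psi)\|_{X_T}\le C\|\tilde\psi_-\|_{H^1}+CR^{2\si+1}\eta(T)$ with $\eta(T)\to 0$. But for an arbitrary $\psi$ in that ball the nonlinear contribution is bounded by $\|\psi\|_{L^\gamma L^\rho}^{2\si\theta}\|\psi\|_{L^\infty H^1}^{2\si(1-\theta)}\|\psi\|_{L^\gamma W^{1,\rho}}\lesssim R^{2\si+1}$, with no factor of $\eta(T)$: the ball bound $\|\psi\|_{L^\gamma L^\rho}\le R$ gives $R^{2\si\theta}$, not $\eta(T)^{2\si\theta}$. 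The resulting self-map condition would force $CR^{2\si}<1$, i.e.\ small data, which is precisely what you cannot assume. The remedy, and what the paper actually does, is to encode the smallness in $X_T$ itself: alongside the bounds of size $K\|\tilde\psi_-\|_{H^1}$, require $\|\psi\|_{L^\gamma((-\infty,-T];L^\rho)}\le 2\|U_V(\cdot)\tilde\psi_-\|_{L^\gamma((-\infty,-T];L^\rho)}$. Then $\|\psi\|_{L^\gamma L^\rho}^{2\si\theta}$ is uniformly small on $X_T$ as $T\to\infty$ (since $U_V(\cdot)\tilde\psi_-\in L^\gamma(\R;L^\rho)$ by Strichartz), and both the self-map and the contraction estimates close for data of arbitrary $H^1$ size. (A secondary imprecision: the time exponent in your chain-rule bound, $L^{(2\si+1)\gamma'}$, does not match the H\"older identity $\frac{1}{\gamma'}=\frac{2\si}{k}+\frac{1}{\gamma}$, which gives $k=\frac{2\si\gamma}{\gamma-2}$ and $\theta=\frac{\gamma-2}{2\si}$; the threshold $\theta\le 1\Leftrightarrow\si\ge 2/d$ coincidentally comes out the same.) With the constraint added to $X_T$, your argument is essentially identical to the paper's.
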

\begin{proof}
  The main part of the proof is to prove that \eqref{eq:duhamel-} has
  a fixed point. Let
  \begin{equation*}
   q=\frac{4\si+4}{d\si}.
  \end{equation*}
The pair $(q,2\si+2)$ is admissible, in the sense that it satisfies
\eqref{eq:adm}.  
With the notation $L^\beta_TY=L^\beta(]-\infty,-T];Y)$, we
  introduce:  
  \begin{align*}
    X_T:=\Big\{ \psi\in C(]-\infty,-T];H^1)\ ;\ &\left\|
    \psi\right\|_{L^q_TL^{2\si+2}} \le  K\|\tilde
                                                  \psi_-\|_{L^2},\\
\left\|
    \nabla \psi\right\|_{L^q_TL^{,2\si+2}} \le  K\|\tilde
                                                  \psi_-\|_{H^1},\quad &
 \left\|  \psi\right\|_{L^\infty_TL^2} \le 2 \|\tilde \psi_-\|_{L^2}\,
    ,\\
 \left\| \nabla  \psi\right\|_{L^\infty_TL^2} \le K \|\tilde \psi_-\|_{H^1}
,\quad 
&\left\|  \psi\right\|_{L^q_T L^{2\si+2}} \le 2 \left\|
    U_V(\cdot)\tilde \psi_-\right\|_{L^q_T L^{2\si+2}}\Big\},
  \end{align*}
where $K$ will be chosen sufficiently large in terms of the
constants present in Strichartz estimates presented in
Proposition~\ref{prop:StrichartzRS}. Set
$r=s=2\si +2$: we have
\begin{equation*}
  \frac{1}{r'}= \frac{1}{r}+\frac{2\si}{s},\quad
\frac{1}{q'}= \frac{1}{q}+\frac{2\si}{k},
\end{equation*}
where $q\le k<\infty$ since $2/d\le
\si<2/(d-2)$. Denote by $\Phi(\psi)$ the right hand side of
\eqref{eq:duhamel-}. For $\psi\in X_T$, Strichartz estimates and H\"older
inequality yield, for all admissible pairs $(q_1,r_1)$:
\begin{align*}
  \left\| \Phi(\psi)\right\|_{L^{q_1}_T L^{r_1}} &\le C_{q_1}\|\tilde    
\psi_-\|_{L^2} +  C\left\| |\psi|^{2\si}\psi\right\|_{L^{q'}_TL^{r'}}  
  \\
&\le C_{q_1}\|\tilde \psi_-\|_{L^2}  +
C\|\psi\|_{L^k_TL^s}^{2\si}\|\psi\|_{L^q_T L^r}\\
&\le C_{q_1}\|\tilde \psi_-\|_{L^2} + C\|\psi\|_{L^q_TL^r}^{2\si\theta
  }\|\psi\|_{L^\infty_TL^r}^{2\si(1-\theta) } \|\psi\|_{L^q_T L^r} ,
\end{align*}
for some $0<\theta\le 1$, where we have used the property
$r=s=2\si+2$. Sobolev embedding and the definition of $X_T$ then imply:
\begin{align*}
 \left\| \Phi(\psi)\right\|_{L^{q_1}_T L^{r_1}} \le C_{q_1}\|\tilde
  \psi_-\|_{L^2} + C\left\|U_V(\cdot) \tilde 
  \psi_-\right\|_{L^q_TL^r}^{2\si\theta 
  }\|\psi\|_{L^\infty_TH^1}^{2\si(1-\theta) } \|\psi\|_{L^q_T L^r} .
\end{align*}
We now apply the operator $A$. Since $A$ commutes with $H$, we have
\begin{equation*}
  \left\| A\Phi(\psi)\right\|_{L^{q_1}_T L^{r_1}} \lesssim
  \|A\tilde \psi_-\|_{L^2} + \left\|
    A\(|\psi|^{2\si}\psi\)\right\|_{L^{q'}_TL^{r'}}. 
\end{equation*}
In view of Lemma~\ref{lem:A}, we have successively,
\begin{align*}
  \|A\tilde \psi_-\|_{L^2} &\lesssim \|\tilde \psi_-\|_{H^1},\\
 \left\|
    A\(|\psi|^{2\si}\psi\)\right\|_{L^{q'}_TL^{r'}}&\lesssim   \left\|
    |\psi|^{2\si}\psi\right\|_{L^{q'}_TL^{r'}} + \left\|
  \nabla   \(|\psi|^{2\si}\psi\)\right\|_{L^{q'}_TL^{r'}} \\
&\lesssim \|\psi\|_{L^k_TL^s}^{2\si}\(\|\psi\|_{L^q_T L^r} +
  \|\nabla\psi\|_{L^q_T L^r} \)\\
&\lesssim \|\psi\|_{L^k_TL^s}^{2\si}\(\|\psi\|_{L^q_T L^r} +
  \|A\psi\|_{L^q_T L^r} \).
\end{align*}
We infer along the same lines as above,
\begin{align*}
 \left\| \nabla\Phi(\psi)\right\|_{L^{q_1}_T L^{r_1}} &\lesssim
  \|\tilde \psi_-\|_{H^1} +\left\|U_V(\cdot) \tilde
  \psi_-\right\|_{L^q_TL^r}^{2\si\theta 
  }\|\psi\|_{L^\infty_TH^1}^{2\si(1-\theta) } \( 
\| \psi\|_{L^q_T L^r} + \|A\psi\|_{L^q_T L^r}\) .
 \end{align*}
We have also
\begin{align*}
\left\| \Phi(\psi)\right\|_{L^{q}_T L^{r}}& \le
  \left\|U_V(\cdot)\tilde \psi_-\right\|_{L^{q}_TL^{r}} 
+ C\left\|U_V(\cdot) \tilde
  \psi_-\right\|_{L^q_TL^r}^{2\si\theta 
  }\|\psi\|_{L^\infty_TH^1}^{2\si(1-\theta) } \|\psi\|_{L^q_T L^r}.
\end{align*}
From Strichartz estimates, $U_V(\cdot)\tilde \psi_- \in L^{q}(\R;L^{r})$, so 
\begin{equation*}
  \left\|U_V(\cdot)\tilde \psi_-\right\|_{L^q_TL^r}  \to 0\quad \text{as }T\to +\infty.
\end{equation*}
Since $\theta>0$, we infer that $\Phi$ sends $X_T$ to itself, for
$T$ sufficiently large. 
\smallbreak

We have also, for $\psi_2,\psi_1\in X_T$:
\begin{align*}
  \left\| \Phi(\psi_2)-\Phi(\psi_1)\right\|_{L^q_T L^r}&\lesssim
  \max_{j=1,2}\| \psi_j\|_{L^k_TL^s}^{2\si} \left\|
  \psi_2-\psi_1\right\|_{L^q_T L^r}\\
&\lesssim \left\|U_V(\cdot)\tilde \psi_-\right\|_{L^q_TL^r}^{2\si\theta
  }\|\tilde \psi_-\|_{H^1}^{2\si(1-\theta) }\left\|
  \psi_2-\psi_1\right\|_{L^q_T L^r}.
\end{align*}
Up to choosing $T$ larger, $\Phi$ is a contraction on $X_T$, equipped
with the distance
\begin{equation*}
  d(\psi_2,\psi_1) = \left\|
  \psi_2-\psi_1\right\|_{L^q_T L^r} + \left\|
  \psi_2-\psi_1\right\|_{L^\infty_T L^2},
\end{equation*}
which makes it a Banach space (see \cite{CazCourant}). 
Therefore, $\Phi$
has a unique fixed point in $X_T$, solution to
\eqref{eq:duhamel-}. It follows from \eqref{eq:A} that this solution
has indeed an $H^1$ regularity with 
\begin{equation*}
   \|\psi(t)-e^{-it H}\tilde\psi_-\|_{H^1(\R^d)}\Tend t {-
      \infty} 0.
\end{equation*}
 In view
of the global well-posedness results for the Cauchy problem associated
to \eqref{eq:psi} (see e.g. \cite{CazCourant}),
the proposition follows.
\end{proof}
\subsection{Asymptotic completeness}
\label{sec:AC-quant}

There are mainly three approaches to prove asymptotic completeness for
nonlinear Schr\"odinger equations (without potential). The initial
approach (\cite{GV79Scatt}) consists in working with a
$\Sigma$ regularity. This  
makes it possible to use the operator $x+it\nabla$, which enjoys
several nice properties, and to which an important evolution law (the
pseudo-conformal conservation law) is associated; see
Section~\ref{sec:class} for more details. This law provides
important a priori estimates, from which asymptotic completeness
follows very easily the the case $\si\ge 2/d$, and less easily for
some range of $\si$ below $2/d$; see e.g. \cite{CazCourant}. 
\smallbreak

The second historical approach relaxes the localization assumption,
and allows 
to work in $H^1(\R^d)$, provided that $\si>2/d$. It is based on
Morawetz inequalities: asymptotic completeness is then established in
\cite{LiSt78,GV85} for the case $d\ge 3$, and in \cite{NakanishiJFA} for the low
dimension cases $d=1,2$, by introducing more intricate Morawetz
estimates. Note that the case $d\le 2$ is already left out in our case, since we
have assumed $d\ge 3$ to prove Proposition~\ref{prop:waveop-quant}.  
\smallbreak

The most recent approach to prove asymptotic completeness in $H^1$
relies on the introduction of interaction Morawetz estimates in \cite{CKSTTCPAM},
an approach which has been revisited since, in particular in
\cite{PlVe09} and \cite{GiVe10}. See also \cite{Vi09} for a very nice
alternative approach of the use of interaction Morawetz estimates.  In
the presence of an external potential, this approach was used in
\cite{CaDa-p}, by working with Morrey-Campanato type norms. 
\smallbreak

An analogue for the pseudo-conformal evolution law is available (see
e.g. \cite {CazCourant}), but it seems that in the presence of $V$
satisfying Assumption~\ref{hyp:V}, it cannot be exploited to get
satisfactory estimates. We shall rather consider
Morawetz estimates as in \cite{GV85}, and thus give an alternative
proof of the corresponding result from \cite{CaDa-p}: note that for $\l=1$,
the first part of \eqref{eq:morawetz} provides exactly the same a
priori estimate as in \cite{GV85}. 
\begin{proposition}\label{prop:AC-quant}
  Let $d\ge 3$, $\frac{2}{d}<\si<\frac{2}{d-2}$, and $V$ satisfying
  Assumption~\ref{hyp:V} for some $\mu>2$. There exists $M=M(\mu,d)$ such
  that if the attractive part of the potential satisfies
  \begin{equation*}
    (\d_r V(x))_+\le \frac{M}{(1+|x|)^{\mu+1}},\quad \forall x\in
    \R^d,
  \end{equation*}
then  for
  all $\varphi\in H^1(\R^d)$, there exist a unique $\psi\in
  C(\R;H^1(\R^d))$ solution to \eqref{eq:psi} with $\psi_{\mid
    t=0}=\varphi$, and a unique $\tilde\psi_+\in
  H^1(\R^d)$ such that
  \begin{equation*}
    \|\psi(t)-e^{-itH}\tilde \psi_+\|_{H^1(\R^d)}\Tend t {+
      \infty} 0.
  \end{equation*}
In addition, $\psi,\nabla \psi\in L^q(\R_+,L^r(\R^d))$ for all
admissible pairs $(q,r)$. 
\end{proposition}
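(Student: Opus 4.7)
Local $H^1$ well-posedness is classical (the nonlinearity is energy-subcritical since $\si<2/(d-2)$), and the conserved energy
\begin{equation*}
E(\psi)=\tfrac12\|\nabla\psi\|_{L^2}^2+\int V|\psi|^2\,dx+\tfrac{1}{\si+1}\|\psi\|_{L^{2\si+2}}^{2\si+2}
\end{equation*}
combined with $V\ge -c_0$ and mass conservation yields $\sup_{t\in\R}\|\psi(t)\|_{H^1}\le C(\|\varphi\|_{H^1})$, hence a global $H^1$ solution. Proposition~\ref{prop:Morawetz} applied with $\l=1$ then gives the a priori bound
\begin{equation*}
\iint_{\R\times\R^d}\frac{|\psi|^{2\si+2}}{|x|}\,dx\,dt\lesssim\|\psi\|_{L^\infty_t H^1_x}^2<\infty,
\end{equation*}
so that $I(T):=\int_T^{+\infty}\!\!\int |\psi|^{2\si+2}/|x|\,dx\,dt\to 0$ as $T\to+\infty$.

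\textbf{From Morawetz to a Strichartz norm.} Set $r=2\si+2$ and $q=\frac{4\si+4}{d\si}$, so that $(q,r)$ is admissible. The main technical step, which I expect to be the principal obstacle, is to deduce from the weighted bound above that $\|\psi\|_{L^q([T,+\infty);L^r)}$ is small for $T$ large. I would follow the Ginibre--Velo scheme: on any subinterval $[T,T']$, split the spatial integral at $|x|=R$; the outer piece is controlled by $R\cdot I(T)$ via the Morawetz weight, while the inner piece is estimated by interpolating between the uniform $H^1$ bound and a fractional power of $I(T)$ through H\"older and Sobolev, crucially using the strict inequality $\si>2/d$. Optimizing in $R$ and combining with a continuity argument in $T'$ forces $\|\psi\|_{L^q([T,+\infty);L^r)}$ to be finite, and small for $T$ sufficiently large.

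\textbf{Full Strichartz bound and asymptotic completeness.} With that smallness in hand, Strichartz estimates (Proposition~\ref{prop:StrichartzRS}) applied to the Duhamel identity
\begin{equation*}
\psi(t)=U_V(t-T)\psi(T)-i\int_T^t U_V(t-s)\(|\psi|^{2\si}\psi\)(s)\,ds,
\end{equation*}
together with the H\"older splitting already used in the proof of Proposition~\ref{prop:waveop-quant}, yield $\psi\in L^{q_1}([T,+\infty);L^{r_1})$ for every admissible pair $(q_1,r_1)$. Applying the vector field $A=\sqrt{H+c_0}$, which commutes with $U_V$, and invoking Lemma~\ref{lem:A} to translate between $\|A\cdot\|_{L^{r_1}}$ and $\|\nabla\cdot\|_{L^{r_1}}$, one upgrades this to $\nabla\psi\in L^{q_1}([T,+\infty);L^{r_1})$; a finite-time Strichartz argument on $[0,T]$ then extends all estimates to $\R_+$. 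Consequently $|\psi|^{2\si}\psi$ and its gradient belong to $L^{q'}(\R_+;L^{r'})$, so that
\begin{equation*}
\tilde\psi_+:=\varphi-i\int_0^{+\infty}U_V(-s)\(|\psi|^{2\si}\psi\)(s)\,ds
\end{equation*}
is a well-defined element of $H^1$, and a final Strichartz estimate gives
\begin{equation*}
\|\psi(t)-U_V(t)\tilde\psi_+\|_{H^1}=\left\|\int_t^{+\infty}U_V(t-s)\(|\psi|^{2\si}\psi\)(s)\,ds\right\|_{H^1}\Tend t {+\infty}0.
\end{equation*}
Uniqueness of $\tilde\psi_+$ follows from the $H^1$-isometric character of $U_V(t)$.
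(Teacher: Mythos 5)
Your overall scaffolding is right (global existence via conservation laws and $V\geqslant -c_0$, Morawetz via Proposition~\ref{prop:Morawetz}, then Strichartz-on-Duhamel with the operator $A$ from Lemma~\ref{lem:A} to upgrade to $H^1$), and you are correct to flag the second paragraph as the crux. But that paragraph, as written, does not work, and it also misses the specific difficulty the paper has to deal with.

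Two concrete problems. First, the split at $|x|=R$ is backwards: on the \emph{inner} region $\{|x|<R\}$ one has $1/|x|>1/R$, so $\iint_{|x|<R}|\psi|^{2\si+2}\leqslant R\,I(T)$ -- it is the inner piece, not the outer one, that Morawetz controls. On the \emph{outer} region $\{|x|>R\}$ the weight $1/|x|$ is small, so Morawetz gives nothing there, and the uniform $L^\infty_tH^1_x$ bound by itself cannot control $\int_T^\infty\|\psi(t)\|_{L^{2\si+2}}^{2\si+2}\,dt$ over an infinite time interval. Some decay-in-time input is required, and nothing in your argument supplies it. Second, and more fundamentally, the Ginibre--Velo scheme (their Corollary~5.1; \cite[Lemme~12.6]{GinibreDEA}) deduces the pointwise decay $\|\psi(t)\|_{L^r}\to 0$ by combining Morawetz with a Duhamel formula and \emph{dispersive} $L^1\to L^\infty$ estimates for the propagator. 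Here that is exactly where the potential creates trouble: dispersive estimates for $U_V=e^{-itH}$ under Assumption~\ref{hyp:V} with $\mu>2$ are only known in dimension $d=3$ (Goldberg), so for $d\geqslant 4$ you cannot run the GV argument with $U_V$. The paper's proof sidesteps this by writing Duhamel's formula with the \emph{free} propagator $U(t)$, which introduces an extra linear term $\int_0^tU(t-s)(V\psi)(s)\,ds$; controlling that term is the actual work (it is done in two estimates $I_1$, $I_2$, the first using free dispersion and the decay of $V$ together with Sobolev embedding, the second using the Morawetz a priori bound after writing $V\psi=|x|^{\alpha}V\cdot\psi/|x|^{\alpha}$). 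Your proposal never addresses this obstruction, so the step from Morawetz to the $L^r$-decay (and hence to a small Strichartz norm) is missing. Once that step is supplied, the rest of your third paragraph is a correct rendering of the standard bootstrap.
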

\begin{proof}
  The proof follows that argument presented in \cite{GV85} (and
  resumed in \cite{GinibreDEA}), so we shall only described the main
  steps and the modifications needed in the present context.  The key
  property in the proof consists in showing that there exists
  $2<r<\frac{2d}{d-2}$ such that 
  \begin{equation}\label{eq:Lp0}
    \|\psi(t)\|_{L^r}\Tend t {+\infty} 0.
  \end{equation}
Since $\psi\in L^\infty(\R;H^1)$ (see e.g. \cite{CazCourant}), we
infer that the above property is true for \emph{all}
$2<r<\frac{2d}{d-2}$. This aspect is the only one that requires some
adaptation in our case. Indeed, once this property is at hand, the end
of the proof relies on Strichartz estimates applied to Duhamel's
formula. In our framework, since we first want to get rid of the
nonlinearity only (and not the potential $V$ yet), we consider
\begin{equation*}
  \psi(t) = U_V(t)\varphi-i\int_0^t U_V(t-s)\(|\psi|^{2\si}\psi(s)\)ds,
\end{equation*}
and thanks to Proposition~\ref{prop:StrichartzRS}, it is possible to
follow exactly the same lines as in \cite{GV85} (see also \cite{TzVi-p})
in order to infer Proposition~\ref{prop:AC-quant}. 
\smallbreak

Therefore, the only delicate point is to show that \eqref{eq:Lp0}
holds for some $2<r<\frac{2d}{d-2}$. This corresponds to Corollary~5.1
in \cite{GV85} (Lemme~12.6 in \cite{GinibreDEA}). The main technical
remark is that once Morawetz estimate is available (the one given in
Proposition~\ref{prop:Morawetz}, whose final conclusion does not depend on
$V$), one uses dispersive properties of the group $U(t)$. As mentioned
above, we do not want to use dispersive properties of $U_V(t)$, since
they are known only in the case $d=3$ (on the other hand, this means
that the result is straightforward in the case $d=3$, from \cite{GV85}
and \cite{Go06}). So instead, we consider
Duhamel's formula for \eqref{eq:psi} in terms of $U(t)$, which reads
\begin{equation}
  \label{eq:DuhamelU}
  \psi(t)= U(t)\varphi-i\int_0^t
  U(t-s)\(|\psi|^{2\si}\psi(s)\)ds-i\int_0^t
  U(t-s)\(V\psi(s)\)ds. 
\end{equation}
The new term compared to \cite{GV85} is of course the last term in
\eqref{eq:DuhamelU}, and so the nonlinearity is now
\begin{equation*}
  f(\psi) = |\psi|^{2\si}\psi +V\psi. 
\end{equation*}
Following the argument from \cite{GV85} (or \cite{GinibreDEA}), it
suffices to prove the following two properties: \\

\noindent 1. There exist
$r_1>2^*=\frac{2d}{d-2}$ and $\alpha>0$ 
such that 
\begin{equation}
  \left\|\int_{t_0}^{t-\ell}
  U(t-s)\(V\psi(s)\)ds\right\|_{L^{r_1}(\R^d)}\le C
\ell^{-\alpha}\|\psi\|_{L^\infty(\R;H^1)}, 
\end{equation}
Consider a Lebesgue index $r_1$
slightly larger than 
$2^*$, 
\begin{equation*}
  \frac{1}{r_1} = \frac{1}{2^*} -\eta,\quad 0<\eta\ll 1. 
\end{equation*}
Let $\ell>0$, and consider
\begin{equation*}
  I_1(t) = \left\|\int_{t_0}^{t-\ell}
  U(t-s)\(V\psi(s)\)ds\right\|_{L^{r_1}(\R^d)}.
\end{equation*}
Standard dispersive estimates for $U$ yield
\begin{equation*}
  I_1(t) \lesssim \int_{t_0}^{t-\ell} (t-s)^{-\delta_1} \|V\psi(s)\|_{L^{r'_1}}ds,
\end{equation*}
where $\delta_1$ is given by
\begin{equation*}
  \delta_1 = d\(\frac{1}{2}-\frac{1}{r_1}\) = 1+\eta d.
\end{equation*}
Now we apply H\"older inequality in space, in view of the identity
\begin{equation*}
  \frac{1}{r'_1} = \frac{1}{2}+\frac{1}{d}-\eta =
  \underbrace{\frac{1}{2}-\frac{1}{d} +\eta}_{1/k} +
  \underbrace{\frac{2}{d} -2\eta}_{1/q}. 
\end{equation*}
For $\eta>0$ sufficiently small, $V\in L^q(\R^d)$ since $\mu>2$, and so
\begin{equation*}
  \|V\psi(s)\|_{L^{r'_1}} \le \|V\|_{L^{q}}\|\psi(s)\|_{L^k}\lesssim \|\psi\|_{L^\infty(\R;H^1)},
\end{equation*}
where we have used Sobolev embedding, since $2<k<2^*$. We infer
\begin{align*}
  I_1(t) &\lesssim  \int_{t_0}^{t-\ell} (t-s)^{-\delta_1} ds
  \|\psi\|_{L^\infty(\R;H^1)} \lesssim  \int_{\ell}^{\infty}
  s^{-\delta_1} ds\|\psi\|_{L^\infty(\R;H^1)}\\
& \lesssim
  \ell^{1-\delta_1}\|\psi\|_{L^\infty(\R;H^1)}=\ell^{-\eta d}\|\psi\|_{L^\infty(\R;H^1)}. 
\end{align*}

\noindent 2. Now for fixed $\ell>0$, let 
\begin{equation*}
  I_2(t) = \left\|\int_{t-\ell}^t
  U(t-s)\(V\psi(s)\)ds\right\|_{L^{2\si+2}(\R^d)}.
\end{equation*}
We show that for any  $\ell>0$, $I_2(t)\to 0$ as $t\to \infty$. 
Dispersive estimates for $U(t)$ yield
\begin{equation*}
  I_2(t) \lesssim \int_{t-\ell}^t
  (t-s)^{-\delta}\|V\psi(s)\|_{L^{\frac{2\si+2}{2\si+1}}}ds,\quad
  \delta = d\(\frac{1}{2}-\frac{1}{2\si+2}\) = \frac{d\si}{2\si+2}<1. 
\end{equation*}
For (a small) $\alpha$ to be fixed later, H\"older inequality yields
\begin{equation*}
  \|V\psi(s)\|_{L^{\frac{2\si+2}{2\si+1}}} =\left\| |x|^{\alpha}V
    \frac{\psi(s)}{|x|^\alpha} \right\|_{L^{\frac{2\si+2}{2\si+1}}}
  \le \left\| |x|^{\alpha}V \right\|_{L^{\frac{\si+1}{\si}}}
\left\| 
    \frac{\psi(s)}{|x|^\alpha} \right\|_{L^{2\si+2}}.
\end{equation*}
Note that for $0<\alpha\ll 1$, $\left\| |x|^{\alpha}V
\right\|_{L^{\frac{\si+1}{\si}}}$ is finite, since
$\frac{\si+1}{\si}>\frac{d}{2}$ and $\mu>2$. For $0<\theta<1$, write 
\begin{align*}
  \left\| 
    \frac{\psi(s)}{|x|^\alpha} \right\|_{L^{2\si+2}}=  \left\| 
    \frac{|\psi(s)|^{\theta}}{|x|^\alpha}
      |\psi(s)|^{1-\theta}\right\|_{L^{2\si+2}}&\le \left\| 
    \frac{\psi(s)}{|x|^{\alpha/\theta}}\right\|_{L^{2\si+2}}^\theta
      \left\|\psi(s)\right\|_{L^{2\si+2}}^{1-\theta}\\
   &   \lesssim  \left\| 
    \frac{\psi(s)}{|x|^{\alpha/\theta}}\right\|_{L^{2\si+2}}^\theta
      \left\|\psi\right\|_{L^\infty(\R;H^1)}^{1-\theta}.
\end{align*}
To use Morawetz estimate, we impose $\alpha/\theta= 1/(2\si+2)$, so
that we have
\begin{equation*}
   \left\| 
    \frac{\psi(s)}{|x|^\alpha} \right\|_{L^{2\si+2}} \lesssim
\(  \int_{\R^d}\frac{|\psi(s,x)|^{2\si+2}}{|x|}dx\)^{\theta/(2\si+2)}
\left\|\psi\right\|_{L^\infty(\R;H^1)}^{1-\theta}. 
\end{equation*}
We conclude by applying H\"older inequality in time: since $\delta<1$,
the map $s\mapsto (t-s)^{-\delta}$ belongs to $L^q_{\rm loc}$ for $1\le
  q\le 1+\gamma$ and $\gamma>0$ sufficiently small. Let $q=1+\gamma$
  with $0<\gamma\ll 1$ so that $s\mapsto (t-s)^{-\delta}\in L^q_{\rm
    loc}$: we have $q'<\infty$, and we can choose  $0<\theta\ll 1$ (or
  equivalently $0<\eta\ll 1$) so
  that 
  \begin{equation*}
    \theta q'=2\si+2. 
  \end{equation*}
We end up with 
\begin{equation*}
 I_2(t) \lesssim \ell^\beta \(\iint_{[t-\ell,t]\times\R^d}
   \frac{|\psi(s,x)|^{2\si+2}}{|x|}dsdx\)^{1/(2\si+2)q'},
\end{equation*}
for some $\beta>0$. The last factor goes to zero as $t\to \infty$ from
Proposition~\ref{prop:Morawetz}. 
\end{proof}

\subsection{Scattering}
\label{sec:conclusion}

Under Assumption~\ref{hyp:V}, a linear scattering theory is available,
provided that $\mu>1$; see e.g. \cite[Section~4.6]{DG}. This means that
the following strong limits exist in $L^2(\R^d)$,
\begin{equation*}
  \lim_{t\to -\infty} U_V(-t)U(t),\quad\text{and}\quad  \lim_{t\to +\infty} U(-t)U_V(t),
\end{equation*}
where the second limit usually requires to project on the continuous
spectrum. Recall that this projection is the identity in our
framework. 
\begin{lemma}\label{lem:Cook-quant}
  Let $d\ge 3$, $V$ satisfying Assumption~\ref{hyp:V} with $p>1$. Then 
the strong limit
 \begin{equation*}
  \lim_{t\to -\infty} U_V(-t)U(t)
\end{equation*}
exists in $H^1(\R^d)$. 
\end{lemma}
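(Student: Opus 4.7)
The plan is to apply Cook's method on a dense subspace of $H^1(\R^d)$, and then extend the limit via a uniform $H^1$-boundedness argument. For the uniform bound, I would use that $U_V(t)$ and $U(t)$ are isometries on $L^2(\R^d)$ and that $A=\sqrt{H+c_0}$, introduced in Lemma~\ref{lem:A}, commutes with $U_V$. Combined with \eqref{eq:A}, this gives
\begin{equation*}
\|U_V(-t)U(t)\varphi\|_{H^1}\lesssim \|U(t)\varphi\|_{L^2}+\|A\,U(t)\varphi\|_{L^2}\lesssim \|\varphi\|_{H^1},\quad \forall t\in\R.
\end{equation*}
A standard density argument then reduces matters to showing the existence of the limit for $\varphi$ in a dense subspace $\mathcal D$ of $H^1(\R^d)$.

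For the Cook step, a direct computation gives, for Schwartz $\varphi$,
\begin{equation*}
\frac{d}{dt}\bigl(U_V(-t)U(t)\varphi\bigr) = iU_V(-t)\bigl(H+\tfrac{1}{2}\Delta\bigr)U(t)\varphi = iU_V(-t)\,V\,U(t)\varphi,
\end{equation*}
using that $H+\tfrac12\Delta=V$. Since $A$ commutes with $U_V$ and $U_V$ is $L^2$-unitary, the Cauchy criterion in $H^1$ as $t\to-\infty$ will follow from
\begin{equation*}
\int_{-\infty}^0 \|V\,U(t)\varphi\|_{H^1}\,dt < \infty.
\end{equation*}
Since $\nabla$ commutes with $U(t)$, $\nabla(VU(t)\varphi)=(\nabla V)U(t)\varphi+V\,U(t)\nabla\varphi$, and both $V$ and $\nabla V$ are bounded by Assumption~\ref{hyp:V}; the question is therefore to show that $U(t)\varphi$ and $U(t)\nabla\varphi$ escape the decay region of $V$ sufficiently fast.

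The main technical step, and the only point in my view requiring real care, is to exploit free dispersion on a well-chosen dense class. I would take $\mathcal D\subset\Sch(\R^d)$ to consist of Schwartz functions whose Fourier transform is supported in an annulus $\{\xi : \rho_1\le|\xi|\le\rho_2\}$ with $0<\rho_1<\rho_2$; this set is dense in $H^1(\R^d)$, and stable under $\nabla$. For such $\varphi$, a standard non-stationary phase analysis of
\begin{equation*}
(U(t)\varphi)(x)=(2\pi)^{-d/2}\int_{\R^d} e^{i(x\cdot\xi - t|\xi|^2/2)}\widehat\varphi(\xi)\,d\xi
\end{equation*}
shows that, outside the cone $\Omega_t=\{x:\rho_1|t|/2\le|x|\le 2\rho_2|t|\}$, both $U(t)\varphi$ and $U(t)\nabla\varphi$ are rapidly decreasing in $(t,x)$. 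Inside $\Omega_t$ one has $|x|\gtrsim |t|$, so Assumption~\ref{hyp:V} yields $|V(x)|+|\nabla V(x)|\lesssim (1+|t|)^{-\mu}$. Combining both regimes with the $L^2$-conservation of $U(t)\varphi$ and $U(t)\nabla\varphi$ leads to
\begin{equation*}
\|V\,U(t)\varphi\|_{H^1}\le C_\varphi\, (1+|t|)^{-\mu},
\end{equation*}
which is integrable on $(-\infty,0]$ exactly when $\mu>1$. The subtle point is the requirement that the frequency support of $\varphi$ be bounded away from zero: only then does stationary phase localize the mass of $U(t)\varphi$ in the region $|x|\sim |t|$, so that the spatial short-range decay of $V$ is converted into an integrable time decay.
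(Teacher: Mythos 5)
Your proof is correct, but it follows a genuinely different route from the paper. You implement Cook's method by restricting to a dense class of Schwartz functions whose Fourier transform is supported in a fixed annulus away from the origin, and you use (non-)stationary phase to localize the free evolution in the cone $|x|\sim |t|$; the short-range decay \eqref{eq:short} is then converted into time decay $\lesssim(1+|t|)^{-\mu}$. This is the classical ``propagation estimate'' approach to linear scattering, in the spirit of \cite{ReedSimon3,DG}. The paper's proof instead works directly on all of $\Sch(\R^d)$: it uses the $L^{r'}\to L^r$ dispersive estimate for $U(t)$ (giving $|t|^{-d(1/2-1/r)}$) and H\"older's inequality against $V\in L^q(\R^d)$ for $q$ slightly below $d$, exploiting the \emph{integrability} of $V$ (which follows from $\mu>1$) rather than its pointwise decay along the classical trajectory. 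Both yield an integrand of order $|t|^{-1-\delta}$ with $\delta>0$. What your version buys is a more geometric picture, closer to the references on linear quantum scattering, and it isolates exactly why frequencies near zero must be excluded from the dense class (they do not escape the potential at a linear rate). What the paper's version buys is brevity: no annular restriction, no stationary phase, only the standard dispersive estimate for the free group combined with the Lebesgue integrability of $V$. The treatment of the $H^1$ part via the operator $A=\sqrt{H+c_0}$ of Lemma~\ref{lem:A} is the same in both, and your uniform-boundedness/density step is correct and in fact already implicit in the statement of Cook's lemma.
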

\begin{proof}
  Following Cook's method (\cite[Theorem~XI.4]{ReedSimon3}), it
  suffices to prove that for all $\varphi \in \Sch(\R^d)$,
  \begin{equation*}
    t\mapsto \left\| U_V(-t) VU(t)\varphi\right\|_{H^1}\in
    L^1((-\infty,-1]). 
  \end{equation*}
For the $L^2$ norm, we have
\begin{equation*}
  \left\| U_V(-t) VU(t)\varphi\right\|_{L^2} = \left\|
    VU(t)\varphi\right\|_{L^2} .
\end{equation*}
Assumption~\ref{hyp:V} implies that $V\in L^q(\R^d)$ for all
$q>d/\mu$. For $\mu>1$, let $q$ be given by 
\begin{equation*}
  \frac{1}{q} = \frac{1}{d}+\eta,\text{ with } \eta>0\text{ and }q>\frac{d}{\mu}. 
\end{equation*}
We apply H\"older inequality with the identity
\begin{equation*}
  \frac{1}{2} = \frac{1}{q} +\underbrace{\frac{1}{2}-\frac{1}{d}-\eta}_{1/r}.
\end{equation*}
Using dispersive estimates for $U(t)$, we have
\begin{equation*}
  \left\|
    VU(t)\varphi\right\|_{L^2} \lesssim \|U(t)\varphi\|_{L^r}\lesssim
  |t|^{-d\(\frac{1}{2}-\frac{1}{r}\)}\|\varphi\|_{L^{r'}}= |t|^{-1-d\eta}\|\varphi\|_{L^{r'}},
\end{equation*}
hence the existence of the strong limit in $L^2$. 
\smallbreak

For the $H^1$ limit, recall that from Lemma~\ref{lem:A}, 
\begin{equation*}
  \left\| \nabla U_V(-t) VU(t)\varphi\right\|_{L^2}\lesssim \left\| A
    U_V(-t) VU(t)\varphi\right\|_{L^2} 
\end{equation*}
Since $A$ commutes with $U_V$ which is unitary on $L^2$, the right
hand side is equal to 
\begin{equation*}
\left\| A
    VU(t)\varphi\right\|_{L^2}\lesssim \|VU(t)\varphi\|_{H^1},
\end{equation*}
where we have used Lemma~\ref{lem:A} again. Now
\begin{equation*}
  \|VU(t)\varphi\|_{H^1} \le \|VU(t)\varphi\|_{L^2}+ \|\nabla V\times
  U(t)\varphi\|_{L^2} + \|VU(t)\nabla \varphi\|_{L^2},
\end{equation*}
and each term is integrable, like for the $L^2$ limit, from
Assumption~\ref{hyp:V}. 
\end{proof}

  In the case $d=3$, the dispersive estimates established by Goldberg
  \cite{Go06} make it possible to prove asymptotic completeness in
  $H^1$ by Cook's method as well: for all $\varphi\in
  \Sch(\R^d)$,
  \begin{equation*}
    t\mapsto \|U(-t)VU_V(t)\varphi\|_{H^1}\in L^1(\R),
  \end{equation*}
a property which can be proven by the same computations as above, up
to changing the order of the arguments. To complete the proof of
Theorem~\ref{theo:scatt-quant}, it therefore remains to prove that for
$d\ge 4$, $\psi_+\in H^1(\R^d)$ and
  \begin{equation}\label{eq:cvH1}
    \|\psi(t)-U(t)\psi_+\|_{H^1(\R^d)}\Tend t \infty 0. 
  \end{equation}
It follows from the above results that
\begin{equation*}
  \psi(t) = U(t) \psi_+ +i\int_t^{+\infty}
  U(t-s)\(|\psi|^{2\si}\psi(s)\)ds
  +i\int_t^{+\infty}U(t-s)\(V(\psi(s)\)ds,
\end{equation*}
and that $\psi,\nabla \psi \in L^q(\R;L^r(\R^d))$ for all admissible
pairs $(q,r)$. Since we have
\begin{equation*}
  \psi_+= U(-t)\psi(t) - i\int_t^{+\infty}
  U(-s)\(|\psi|^{2\si}\psi(s)\)ds
 -i\int_t^{+\infty}U(-s)\(V(\psi(s)\)ds,
\end{equation*}
the previous estimates show that $\psi_+\in H^1(\R^d)$, along with
\eqref{eq:cvH1}.

\section{Scattering for the asymptotic envelope}
\label{sec:class}

In this section, we prove Theorem~\ref{theo:scatt-class}. The general
argument is similar to the quantum case: we first prove that the
nonlinear term can be neglected to large time, and then rely on
previous results to neglect the potential. 
Recall that in view of Assumption~\ref{hyp:V}, the time dependent
harmonic potential $\frac{1}{2}\<Q(t)y,y\>$ satisfies
 \begin{equation}\label{eq:decayQ}
   \left\|\frac{d^\alpha}{dt^\alpha}Q(t)\right\|\lesssim
   \<t\>^{-\mu-2-\alpha},\quad \alpha \in \N,
 \end{equation}
where $\|\cdot\|$ denotes any matricial norm. 
We denote by 
\begin{equation*}
  H_Q = -\frac{1}{2}\Delta + \frac{1}{2}\<Q(t)y,y\>
\end{equation*}
the time-dependent Hamiltonian present in \eqref{eq:u}. Like in the
quantum case, we show that the nonlinearity is negligible for large
time by working on Duhamel's formula associated to \eqref{eq:u} in
terms of $H_Q$. Since $H_Q$ depends on time, we recall that the
propagator $U_Q(t,s)$ is the operator which maps $u_0$ to $u_{\rm lin}(t)$,
where $u_{\rm lin}$ solves
\begin{equation*}
  i\d_t u_{\rm lin} +\frac{1}{2}\Delta u_{\rm lin} =
  \frac{1}{2}\<Q(t)y,y\>u_{\rm lin};\quad u_{{\rm lin}}(s,y)=u_0(y). 
\end{equation*}
It is a unitary dynamics, in the sense that $U_Q(s,s)=1$, and
$U_Q(t,\tau)U_Q(\tau,s)=U_Q(t,s)$;
see e.g. \cite{DG}. Then to prove the existence of wave operators, we consider the
integral formulation
\begin{equation}
  \label{eq:duhamel-wave-class}
  u(t) = U_Q(t,0)\tilde u_--i\int_{-\infty}^t U_Q(t,s)\(|u|^{2\si}u(s)\)ds.
\end{equation}
A convenient tool is given by Strichartz estimates associated to
$U_Q$. Local in time Strichartz estimates follow from general results
given in \cite{Fujiwara}, where local dispersive estimates are
proven for more general potential. To address large time,  we take
advantage of the fact that the 
potential is exactly quadratic with respect to the space variable, so
an explicit formula is available for $U_Q$, entering the general
family of Mehler's formulas (see e.g. \cite{Feyn,HormanderQuad}). 
\subsection{Mehler's formula}
\label{sec:mehler}

Consider, for $t_0\ll -1$,
\begin{equation*}
i\d_tu+\frac{1}{2}\Delta u=\frac{1}{2}\< Q(t)y,y\> u\quad
;\quad u(t_0,y)=u_0(y).
\end{equation*}
We seek a solution of the form
\begin{equation}
  \label{eq:mehler}
  u(t,y) = \frac{1}{h(t)}\int_{\R^d}
  e^{\frac{i}{2}\(\<M_1(t)y,y\>+\<M_2(t)z,z\>+2\<P(t)y,z\>\)}u_0(z)dz, 
\end{equation}
with symmetric matrices $M_1, M_2,P\in \mathcal S_d(\R)$. 
Experience shows that no linear term is needed in this formula, since
the potential is exactly quadratic (see
e.g. \cite{CLSS08}). 
\smallbreak

We compute:
\begin{align*}
  i\d_t u & = -i\frac{\dot h}{h}u -\frac{1}{2}\<\dot M_1(t)y,y\>u\\
&\quad 
  +\frac{1}{h}\int e^{\frac{i}{2}\(\dots\)} \(-\frac{1}{2}\<\dot
  M_2(t)z,z\>-\<\dot P(t)y,z\>\)u_0(z)dz,
\end{align*}
\begin{align*}
  \d_{j}^2 u &= \frac{1}{h}\int e^{\frac{i}{2}\(\dots\)}
  \(-\(\(M_1(t)y\)_j + \(P(t)z\)_j\)^2 -i\(M_1\)_{jj}\)u_0(z)dz,
\end{align*}
hence
\begin{align*}
 & i\d_tu+\frac{1}{2}\Delta u =  -i\frac{\dot h}{h}u
  +\frac{i}{2}\operatorname{tr} M_1 - \frac{1}{2}\<\dot M_1(t)y,y\>u\\
&+ \frac{1}{2h}\int
  e^{\frac{i}{2}\(\<M_1(t)y,y\>+\<M_2(t)z,z\>+2\<P(t)y,z\>\)}u_0(z)\times\\
&\times\( 
-\<\dot  M_2(t)z,z\>-2\<\dot
P(t)y,z\>-|M_1(t)y|^2 -|P(t)z|^2 -2
\<M_1(t)y,P(t)z\>\)dz. 
\end{align*}
Identifying the quadratic forms (recall that the matrices $M_j$ and
$P$ are symmetric), we find:
\begin{align*}
&\frac{\dot h}{h}=  \frac{1}{2}\operatorname{tr} M_1,\\
& \dot M_1+M_1^2+Q=0,\\
& \dot M_2 +P^2=0,\\
& \dot P + PM_1=0.
\end{align*}
Dispersion is given by
\begin{equation*}
  h(t)  = h(t_1)\exp\(\frac{1}{2}\int_{t_1}^t \operatorname{tr} M_1(s)ds\),
\end{equation*}
where $M_1$ solves the matrix Riccati equation
\begin{equation}
  \label{eq:riccati}
   \dot M_1 + M_1^2 + Q=0;\quad M_1(t_0)=\frac{1}{t_0}{\rm I}_d.
\end{equation}
Note that in general, solutions to Riccati equations develop
singularities in finite time. What saves the day here is that
\eqref{eq:riccati} is not translation invariant, and can be
considered, for $t\le t_0\ll -1$, 
as a perturbation of the Cauchy problem
\begin{equation*}
  \dot M + M^2 =0;\quad M(t_0)=\frac{1}{t_0}{\rm I}_d,
\end{equation*}
whose solution is given by 
\begin{equation*}
  M(t) = \frac{1}{t}{\rm I}_d. 
\end{equation*}
\begin{lemma}\label{lem:riccati}
  Let $Q$ be a symmetric matrix satisfying \eqref{eq:decayQ} for $\mu>1$. There
  exists $t_0<0$ such that \eqref{eq:riccati} has a unique solution
  $M_1\in C((-\infty,t_0];\mathcal S_d(\R))$. In addition, it
  satisfies
  \begin{equation*}
    M_1(t)= \frac{1}{t}{\rm I}_d +\O\(\frac{1}{t^2}\)\quad \text{as
    }t\to -\infty. 
  \end{equation*}
\end{lemma}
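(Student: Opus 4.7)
The plan is to perturb off the explicit solution $M_0(t)=\frac{1}{t}\mathrm{I}_d$ of the unperturbed Riccati equation $\dot M+M^2=0$, and solve for the remainder by a contraction in a weighted space on $(-\infty,t_0]$ for $|t_0|$ sufficiently large. Write $M_1(t)=\frac{1}{t}\mathrm{I}_d+N(t)$. A direct substitution in \eqref{eq:riccati} gives
\begin{equation*}
  \dot N +\frac{2}{t}N + N^2 + Q = 0,\qquad N(t_0)=0,
\end{equation*}
since $\dot M_0+M_0^2=0$ and $M_0 N+NM_0 = \tfrac{2}{t}N$. The linear part has the integrating factor $t^2$: multiplying by $t^2$ and recognizing $\frac{d}{dt}(t^2 N)=2tN+t^2\dot N$, the equation becomes $(t^2 N)'=-t^2(N^2+Q)$, which, together with $N(t_0)=0$, is equivalent to the integral formulation
\begin{equation*}
  N(t)=\frac{1}{t^2}\int_t^{t_0} s^2\bigl(N(s)^2+Q(s)\bigr)\,ds,\qquad t\le t_0<0.
\end{equation*}

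The next step is to apply Banach's fixed point theorem to the map $\Phi$ defined by the right-hand side, in the complete metric space
\begin{equation*}
  X_{t_0,R}=\bigl\{N\in C((-\infty,t_0];\mathcal S_d(\R))\ :\ \|N\|_\ast:=\sup_{t\le t_0} t^2\|N(t)\|\le R\bigr\},
\end{equation*}
equipped with the distance induced by $\|\cdot\|_\ast$. Stability follows from \eqref{eq:decayQ} with $\alpha=0$, which gives $s^2\|Q(s)\|\lesssim \langle s\rangle^{-\mu}$, integrable near $-\infty$ since $\mu>1$: for $N\in X_{t_0,R}$,
\begin{equation*}
  t^2\|\Phi(N)(t)\|\le R^2\int_t^{t_0}\frac{ds}{s^2}+C\int_{-\infty}^{t_0}\frac{ds}{\langle s\rangle^\mu}\le \frac{R^2}{|t_0|}+\frac{C'}{|t_0|^{\mu-1}},
\end{equation*}
which is $\le R$ once $|t_0|$ is large enough (in terms of $R$, and taking $R$ of order, say, $|t_0|^{-(\mu-1)}$ would even give the sharp decay of $N$ dictated by $Q$). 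The contraction estimate is analogous: for $N_1,N_2\in X_{t_0,R}$,
\begin{equation*}
  t^2\|\Phi(N_1)(t)-\Phi(N_2)(t)\|\le \int_t^{t_0}s^2\|N_1+N_2\|\,\|N_1-N_2\|\,ds\le \frac{2R}{|t_0|}\,\|N_1-N_2\|_\ast,
\end{equation*}
which is a strict contraction as soon as $|t_0|>4R$. Thus $\Phi$ admits a unique fixed point $N\in X_{t_0,R}$, satisfying $\|N(t)\|\lesssim 1/t^2$, which yields the asserted asymptotics $M_1(t)=\frac{1}{t}\mathrm{I}_d+\mathcal O(1/t^2)$.

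Two final points: first, symmetry is preserved because both $Q(s)$ and $N(s)^2$ (for symmetric $N(s)$) are symmetric, so $\Phi$ maps the symmetric subspace into itself and the fixed point is automatically in $\mathcal S_d(\R)$. Second, uniqueness in $C((-\infty,t_0];\mathcal S_d(\R))$ (without the weight constraint) follows from the local Lipschitz character of the Riccati vector field: any two $C^1$ solutions sharing the value $\frac{1}{t_0}\mathrm{I}_d$ at $t_0$ coincide on the maximal interval of existence by the Cauchy--Lipschitz theorem. I expect the main obstacle to be simply the bookkeeping that identifies the correct weighted norm; once the ansatz $M_1=\frac{1}{t}\mathrm{I}_d+N$ is made, the factor $t^2$ precisely cancels the linear term $2N/t$ and tames the potentially blowing-up Riccati nonlinearity, turning the problem into a routine contraction.
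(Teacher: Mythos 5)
Your proposal is correct and follows essentially the same route as the paper: the identical perturbative ansatz $M_1=\tfrac{1}{t}\mathrm{I}_d+N$, the same $t^2$ integrating factor (the paper's $\tilde R=t^2R$ is exactly your weighted norm $\|N\|_\ast=\sup t^2\|N\|$), and the same integral equation and estimates. The only cosmetic difference is that you close the argument by a Banach fixed point in the weighted ball, whereas the paper invokes Cauchy--Lipschitz for local existence and then a quoted bootstrap lemma to obtain the global-in-time bound on $\tilde R$; both deliver the same conclusion $N(t)=\O(1/t^2)$.
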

\begin{proof}
Seek a solution of the form $M_1(t) =   \frac{1}{t}{\rm I}_d +R(t)$,
where $R$ is s symmetric matrix solution of
\begin{equation*}
  \dot R + \frac{2}{t}R+R^2 +Q= 0;\quad R(t_0)=0. 
\end{equation*}
Equivalently, the new unknown $\tilde R = t^2 R$ must satisfy
\begin{equation}\label{eq:Rmatrix}
   \dot {\tilde R} + \frac{1}{t^2}\tilde R^2 +t^2Q= 0;\quad \tilde R(t_0)=0. 
\end{equation}
Cauchy-Lipschitz Theorem yields a local solution: we show that it is
defined on $(-\infty,t_0]$, along with the announced decay. 
Integrating between $t_0$ and $t$, we find
\begin{equation*}
  \tilde R(t) = -\int_{t_0}^t \frac{1}{s^2}\tilde R(s)^2ds -
  \int_{t_0}^ts^2 Q(s)ds.
\end{equation*}
Note that $s\mapsto s^2 Q$ is integrable as $s\to -\infty$ from \eqref{eq:decayQ}
(we assume $\mu>1$). Setting 
\begin{equation*}
  \rho(t) =\sup_{t\le s\le t_0}\|\tilde R(s)\|,
\end{equation*}
where $\|\cdot\|$ denotes any matricial norm, we have
\begin{equation*}
  \rho(t) \le \frac{C}{t_0}\rho(t)^2 + \frac{C}{t_0^{\mu-1}},
\end{equation*}
for some constant $C$. Choosing $t_0\ll -1$, global existence follows
from the following bootstrap argument (see \cite{BG3}):
Let $f=f(t)$ be a nonnegative continuous function on $[0,T]$ such
that, for every $t\in [0,T]$, 
\begin{equation*}
  f(t)\le \eps_1 + \eps_2 f(t)^\theta,
\end{equation*}
where $\eps_1,\eps_2>0$ and $\theta >1$ are constants such that
\begin{equation*}
  \eps_1 <\left(1-\frac{1}{\theta} \right)\frac{1}{(\theta \eps_2)^{1/(\theta
-1)}}\ ,\ \ \ f(0)\le  \frac{1}{(\theta \eps_2)^{1/(\theta-1)}}.
\end{equation*}
Then, for every $t\in [0,T]$, we have
\begin{equation*}
  f(t)\le \frac{\theta}{\theta -1}\ \eps_1.
\end{equation*}
This shows that for $|t_0|$ sufficiently large, the matrix $R$ (hence
$M_1$) is defined on $(-\infty,t_0]$. Moreover, since $\tilde R$ is
bounded, $R(t)=\O(t^{-2})$ as $t\to -\infty$, hence the result. 
\end{proof}
We infer
\begin{equation*}
  h(t)\Eq t {-\infty} c|t|^{d/2},
\end{equation*}
which is the same dispersion as in the case without
potential. Putting this result together with local dispersive estimates from
\cite{Fujiwara}, we have:
\begin{lemma}\label{lem:strichartz-quad}
  Let $Q$ be a symmetric matrix satisfying \eqref{eq:decayQ} for
  $\mu>1$. Then for all admissible pairs $(q,r)$, 
there exists $C=C(q,d)$ such that for all $s\in \R$,
\begin{equation*}
  \|U_Q(\cdot,s)f\|_{L^q(\R;L^r(\R^d))}\le C \|f\|_{L^2(\R^d)},\quad
  \forall f\in L^2(\R^d). 
\end{equation*}
For two admissible pairs $(q_1,r_1)$ and $(q_2,r_2)$, there exists $C_{q_1,q_2}$ such
that for all time interval $I$, if we denote by
\begin{equation*}
  R(F)(t,y) = \int_{I\cap \{s\le t\}} U_Q(t,s)F(s,y)ds,
\end{equation*}
we have
\begin{equation*}
  \|R(F)\|_{L^{q_1}(I;L^{r_1}(\R^d))}\le
  C_{q_1,q_2}\|F\|_{L^{q_2'}(I;L^{r_2'}(\R^d))},\quad \forall F\in
  L^{q_2'}(I;L^{r_2'}(\R^d)). 
\end{equation*}
\end{lemma}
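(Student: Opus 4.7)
The plan is to upgrade Fujiwara's local dispersive estimates, together with the long-time Mehler representation \eqref{eq:mehler} and the Riccati analysis of Lemma~\ref{lem:riccati}, into a global dispersive estimate
\begin{equation*}
\|U_Q(t,s) f\|_{L^\infty(\R^d)} \le \frac{C}{|t-s|^{d/2}}\|f\|_{L^1(\R^d)}, \quad t \neq s,
\end{equation*}
with constant $C$ independent of $s \in \R$. Once this is at hand, the homogeneous and inhomogeneous Strichartz bounds of the lemma follow from the abstract $TT^*$ machinery of Keel and Tao \cite{KT} combined with the $L^2$-unitarity of $U_Q(t,s)$; since the admissibility condition \eqref{eq:adm} excludes the endpoint $q=2$, Christ--Kiselev's lemma alone suffices to upgrade the homogeneous estimate to the retarded inhomogeneous one.

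For the short-time regime $|t-s|\le 1$, I would invoke Fujiwara \cite{Fujiwara} directly: his construction of the fundamental solution applies to Hamiltonians with potentials of at most quadratic growth and yields the dispersive kernel bound with constants depending only on bounds for the second derivatives of the potential, which are uniform in $t$ by \eqref{eq:decayQ}. For the long-time regime $|t-s|\ge 1$ with both $t$ and $s$ outside a fixed compact window $[-T_0,T_0]$, I would rerun the Mehler ansatz \eqref{eq:mehler} with the reference time set to an arbitrary $s$ satisfying $|s|\ge T_0$. The Riccati equation of Lemma~\ref{lem:riccati} retains the same structure, and the rapid decay \eqref{eq:decayQ} ensures that its solution exists globally past $s$ with the asymptotic $M_1(t)=(t-s)^{-1}\mathrm{I}_d+O(|t-s|^{-2})$, hence $h(t)\sim c|t-s|^{d/2}$, giving the desired $L^1\to L^\infty$ dispersion rate.

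The remaining mixed configurations---one of $t,s$ large in absolute value and the other inside $[-T_0,T_0]$---can be handled by the composition identity $U_Q(t,s)=U_Q(t,\tau)U_Q(\tau,s)$ with an intermediate time $\tau\in\{\pm T_0\}$, together with $L^2$-unitarity and a finite iteration of Fujiwara's local bound to cover the bounded portion of the trajectory; here one only needs a lower bound on $|t-s|$ to absorb constants.

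The main obstacle I anticipate is making the Mehler-based construction genuinely uniform in the reference time $s$: Lemma~\ref{lem:riccati} is stated with a specific initial condition at $t_0\ll -1$, whereas one needs the same conclusion with initial condition at an arbitrary $s$ satisfying $|s|\ge T_0$, and uniformity of all constants in $s$ is essential for Keel--Tao to apply. The mechanism that should enable this uniformity is that the perturbative bootstrap in the proof of Lemma~\ref{lem:riccati} only relies on tail integrals of the form $\int^{\pm\infty}\tau^2\|Q(\tau)\|\,d\tau$, which by \eqref{eq:decayQ} are uniformly small over all intervals with sufficiently large endpoints, so the same fixed-point argument goes through with $s$-independent constants.
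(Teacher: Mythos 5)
Your plan matches the paper's in outline: reduce global Strichartz to a uniform dispersive estimate for $U_Q(t,s)$, obtain it by combining Fujiwara's local estimate with a Mehler/Riccati analysis of the long-time kernel, and then invoke Keel--Tao and Christ--Kiselev. The paper is quite terse at this point (it states only ``Putting this result together with local dispersive estimates from \cite{Fujiwara}, we have: [Lemma]''), and you are right to flag the uniformity in $s$ as the crux of the matter.

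Two points deserve more care, however.

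First, ``rerun the Mehler ansatz with reference time $s$'' conflates two different Riccati set-ups. The propagator kernel $K(t,y;s,z)$ of $U_Q(t,s)$ is in Mehler form only if $M_1(\cdot,s)$ \emph{blows up} like $(t-s)^{-1}\mathrm{I}_d$ as $t\to s$ (otherwise the integral cannot concentrate to $\delta(y-z)$); by contrast Lemma~\ref{lem:riccati} works with the \emph{regular} data $M_1(t_0)=t_0^{-1}\mathrm{I}_d$ at a single large negative $t_0$. If you shift the regular data to $M_1(s)=s^{-1}\mathrm{I}_d$, the resulting $h$ measures dispersion relative to time $0$, not to $s$, and you will not get $h\sim c\,|t-s|^{d/2}$. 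To run the argument for $U_Q(t,s)$ one must solve the Riccati ODE with the singular initial behaviour; the associated bootstrap for $\widetilde R=(t-s)^2 R$ is more delicate than in Lemma~\ref{lem:riccati} because the kernel $(\tau-s)^{-2}\widetilde R^2$ is singular precisely at the initial time $\tau=s$, and one needs the cubic vanishing $\widetilde R(\tau)=O((\tau-s)^3)$ to close it, not merely the smallness of $\int^{\pm\infty}\tau^2\|Q(\tau)\|\,d\tau$.

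Second, the composition step for the mixed configuration does not close as stated. Operator norms from $L^1$ to $L^\infty$ do not compose: factoring $U_Q(t,s)=U_Q(t,\pm T_0)\,U_Q(\pm T_0,s)$ and multiplying an $L^1\to L^\infty$ bound on one factor with $L^2$-unitarity of the other does not yield an $L^1\to L^\infty$ bound on the product, and one cannot place the intermediate space on the interpolation line between $L^1\to L^\infty$ and $L^2\to L^2$. What actually composes cleanly here is the classical data: the Van~Vleck prefactor of the Gaussian kernel is $|\det B(t,s)|^{-1/2}$ where $B$ solves $\ddot B+Q(t)B=0$, $B(s)=0$, $\dot B(s)=\mathrm{I}_d$, and symplectic matrices compose, giving $B(t,s)=A(t,\tau)B(\tau,s)+B(t,\tau)\dot B(\tau,s)$. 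That identity, together with $B(t,\tau)\sim (t-\tau)\mathrm{I}_d$ for $\tau\ge T_0$ fixed, is the mechanism that transports the $|t-s|^{-d/2}$ rate across the compact window; the lower bound on $|\det B(t,s)|$ in the mixed regime then reduces to a non-degeneracy of $\dot B(\pm T_0,s)$, which is what one should verify rather than trying to compose $L^1\to L^\infty$ bounds.
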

\begin{remark}
  Since we have dispersive estimates, end-point Strichartz estimates
  ($q=2$ when $d\ge 3$)
  are also available from \cite{KT}. 
\end{remark}
\subsection{Wave operators}
\label{sec:wave-class}

In this section, we prove:
\begin{proposition}\label{prop:wave-class}
   Let $d\ge 1$, $\frac{2}{d}\le \si<\frac{2}{(d-2)_+}$, and $V$ satisfying
  Assumption~\ref{hyp:V} for some $\mu>1$. For
  all $\tilde u_-\in \Sigma$, there exists a unique $u\in
  C(\R;\Sigma)$ solution to \eqref{eq:u}  such that
  \begin{equation*}
    \|U_Q(0,t)u(t)-\tilde u_-\|_{\Sigma}\Tend t {-
      \infty} 0.
  \end{equation*}
\end{proposition}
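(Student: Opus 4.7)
The plan is to prove Proposition~\ref{prop:wave-class} by a fixed-point argument applied to \eqref{eq:duhamel-wave-class} on a time interval $(-\infty,-T]$ for $T$ sufficiently large, then extend the resulting solution to all of $\R$ by the global Cauchy theory in $\Sigma$ for \eqref{eq:u} (available under the standing assumptions on $\si$ and $V$, see e.g. \cite{CazCourant}). The Strichartz estimates from Lemma~\ref{lem:strichartz-quad}, combined with Mehler's formula from Section~\ref{sec:mehler}, will handle the $L^q_tL^r_x$ pieces of the norm controlling the fixed point space, exactly as in \cite{GV85}; the new difficulty, compared with the quantum scattering problem of Section~\ref{sec:quant}, is to propagate the $\Sigma$-regularity, i.e.\ to keep track of the weight $\<y\>$.

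The central idea is to introduce the \emph{Galilean-type operators} adapted to $H_Q$,
\begin{equation*}
  A(t) := U_Q(t,0)\,y\,U_Q(0,t),\qquad B(t):=-iU_Q(t,0)\,\nabla\,U_Q(0,t),
\end{equation*}
which by construction commute with $U_Q(t,s)$ in the sense that $A(t)U_Q(t,s)=U_Q(t,s)A(s)$, and similarly for $B(t)$. Since $U_Q(t,0)$ is given by Mehler's formula \eqref{eq:mehler} and since, by Lemma~\ref{lem:riccati}, $M_1(t)=\tfrac1t\mathrm I_d+\O(1/t^2)$ as $t\to-\infty$ (so that the behavior of $U_Q$ is at leading order that of the free propagator), $A(t)$ can be factorized as a modulation conjugate of a differential operator,
\begin{equation*}
  A(t) = \mathcal M(t)\,(it\nabla)\,\mathcal M(-t) + \O_{L^2\to L^2}(1),
\end{equation*}
where $\mathcal M(t)$ denotes multiplication by a quadratic phase built from $M_1(t)$. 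This is the exact analog of the classical identity $x+it\nabla = it\, e^{i|x|^2/(2t)}\nabla e^{-i|x|^2/(2t)}$ used in the free case \cite{GV85,CazCourant}, and it implies the Gagliardo--Nirenberg-type nonlinear estimate
\begin{equation*}
   \bigl\|A(t)\bigl(|u|^{2\si}u\bigr)\bigr\|_{L^{r'}}\lesssim \|u\|_{L^s}^{2\si}\|A(t)u\|_{L^r},
\end{equation*}
for the admissible exponents chosen in the proof of Proposition~\ref{prop:waveop-quant}.

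With these tools in hand, I would define, for $T\gg1$, the complete metric space $X_T$ of functions $u\in C((-\infty,-T];\Sigma)$ equipped with the norms $\|u\|_{L^q_TL^r}$, $\|u\|_{L^\infty_TL^2}$, together with the companion bounds on $\|A(\cdot)u\|_{L^q_TL^r\cap L^\infty_TL^2}$ and $\|B(\cdot)u\|_{L^q_TL^r\cap L^\infty_TL^2}$, each controlled by the corresponding norm of $\tilde u_-$; here $(q,r)=\bigl(\tfrac{4\si+4}{d\si},2\si+2\bigr)$. Commuting $A(t)$ and $B(t)$ with \eqref{eq:duhamel-wave-class}, applying Lemma~\ref{lem:strichartz-quad} and the above nonlinear estimate, the smallness of $\|U_Q(\cdot,0)\tilde u_-\|_{L^q_TL^r}$ as $T\to+\infty$ (again a consequence of Strichartz) yields both stability and contraction in $X_T$. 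The fixed point $u$ then satisfies $\|U_Q(0,t)u(t)-\tilde u_-\|_\Sigma\to0$ as $t\to-\infty$, because $U_Q(0,t)$ is an $L^2$-isometry and because the operators $A(t)$, $B(t)$ were designed to transform weights and derivatives back to $y$ and $\nabla$ when conjugated by $U_Q(0,t)$.

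The main obstacle is the control of $A(t)u$ through the nonlinearity: since $A(t)$ is not a derivation and mixes weight and derivative, the standard commutator trick used for $\nabla$ does not apply. The resolution is the factorization above, which relies critically on the Riccati analysis of Lemma~\ref{lem:riccati}, and thus on the integrability condition $\mu>1$ ensuring $t^2Q(t)\in L^1$ near $-\infty$. The endpoint $\si=2/d$ is the other delicate point: as in \cite{GV85,CazCourant}, one needs $\theta>0$ (with $\theta$ as in the proof of Proposition~\ref{prop:waveop-quant}) to absorb the nonlinear contribution by smallness of the linear Strichartz norm on $(-\infty,-T]$, which is exactly what the condition $\si\ge 2/d$ delivers.
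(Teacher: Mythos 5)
Your approach is genuinely different from the paper's, and it recovers an idea that the paper explicitly mentions but chooses not to pursue in the proof of Proposition~\ref{prop:wave-class}. The paper works with the non-commuting operator $J(t)=y+it\nabla$ and its companion $\nabla$, observes that
$[i\partial_t-H_Q,J]=itQ(t)y=itQ(t)J(t)+t^2Q(t)\nabla$ so that the commutator is $L^1$ in time because $\mu>1$, proves the homogeneous estimate \eqref{eq:Qhomo} by treating the coupled system for $(\nabla U_Q f,\ JU_Q f)$ as a perturbation via Strichartz, and then closes the fixed point on $u,\nabla u,Ju$ with these commutator corrections carried along. You instead introduce exactly commuting operators $A(t)=U_Q(t,0)\,y\,U_Q(0,t)$, $B(t)=-iU_Q(t,0)\nabla U_Q(0,t)$, which are designed so that $\|A(t)u(t)\|_{L^2}=\|y\,U_Q(0,t)u(t)\|_{L^2}$ and $\|B(t)u(t)\|_{L^2}=\|\nabla U_Q(0,t)u(t)\|_{L^2}$; this makes the conversion to the asserted convergence $\|U_Q(0,t)u(t)-\tilde u_-\|_\Sigma\to0$ transparent and removes all commutator terms from the Duhamel expansion. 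This is precisely the alternative the paper flags in Section~\ref{sec:vector-field-Q} ("It is possible to construct a vector field adapted to the presence of $Q$, even though it is not needed to prove Proposition~\ref{prop:wave-class}"), and it is a legitimate route: the paper even uses this exactly-commuting structure later, in Proposition~\ref{prop:extra-u}. What the paper's route buys is that $J$ has the three structural properties (factorization, derivation on gauge-invariant nonlinearities, weighted Gagliardo--Nirenberg) for free, while your route buys a cleaner Duhamel computation at the price of having to re-establish those properties for $A$.

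That last price is where your write-up has a genuine imprecision that needs to be fixed. The claimed factorization
$A(t)=\mathcal M(t)\,(it\nabla)\,\mathcal M(-t)+\O_{L^2\to L^2}(1)$
cannot hold as stated: $\mathcal M(t)(it\nabla)\mathcal M(-t)=it\nabla+tM_1(t)y$, and with $M_1(t)=\tfrac1t\mathrm I_d+\O(1/t^2)$ the discrepancy with $J(t)$ is of size $\O(1/t)\cdot y$, which is \emph{not} bounded on $L^2$. The correct statement, consistent with what the paper does in Section~\ref{sec:vector-field-Q}, is an \emph{exact} factorization $A(t)=iW(t)\,\mathcal M(t)\,\nabla\,\mathcal M(-t)$ with a matrix $W(t)$, not the scalar $t$, and with $\mathcal M(t)=e^{i\phi}$, $\phi=\tfrac12\langle K(t)y,y\rangle$, where $(K,W)$ solve $\dot K+K^2+Q=0$, $\dot W=WK$. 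This exact form is what gives both the derivation property (because $\mathcal M$ is a modulus-one multiplier) and the weighted Gagliardo--Nirenberg inequality (via $\|W(t)^{-1}\|\lesssim|t|^{-1}$). Two points must then be verified: (i) the Riccati variable $K=-b^{-1}a$ associated with the specific initial data $A(0)=y$ remains real symmetric, so that the phase $\phi$ is real and $\mathcal M$ is unitary; (ii) the asymptotics $\|W(t)^{-1}\|\lesssim|t|^{-1}$ as $t\to-\infty$. Both are plausible under Assumption~\ref{hyp:V} with $\mu>1$, but Lemma~\ref{lem:riccati} as stated handles a different Cauchy problem (data prescribed at $t_0\ll-1$, not at $t=0$), so you would need to either propagate from $t=0$ to $t_0$ on a compact interval and then invoke Lemma~\ref{lem:riccati}, or simply use the paper's $\mathcal A_-$ on $(-\infty,-T]$ (which has the right asymptotics by construction) in place of $A$ on that interval, and absorb the bounded discrepancy between $A$ and $\mathcal A_-$ on $[-T,0]$ separately. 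With these repairs, the rest of your argument — the choice of $(q,r)=\bigl(\tfrac{4\si+4}{d\si},2\si+2\bigr)$, the smallness of $\|U_Q(\cdot,0)\tilde u_-\|_{L^q_TL^r}$ as $T\to\infty$, the role of $\si\ge 2/d$ — mirrors the paper and is sound.
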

\begin{remark}
  The assumption $\si\ge \frac{2}{d}$ could easily be relaxed,
  following the classical argument (see e.g. \cite{CazCourant}). We do
  not present the argument, since Theorem~\ref{theo:scatt-quant} is
  proven only for $\si>\frac{2}{d}$. 
\end{remark}

\begin{proof}
  The proof follows closely the approach without potential
  ($Q=0$). From this perspective, a key tool is the vector field
  \begin{equation*}
    J(t)=y+it\nabla.
  \end{equation*}
It satisfies three important properties:
\begin{itemize}
\item It commutes with the free Schr\"odinger dynamics,
  \begin{equation*}
   \left[ i\d_t +\frac{1}{2}\Delta,J\right]=0.  
  \end{equation*}
\item It acts like a derivative on gauge invariant nonlinearities. If
  $F(z)$ is of the form $F(z)=G(|z|^2)z$, then 
  \begin{equation*}
    J(t)\(F(u)\) = \d_z F(u)J(t)u -\d_{\bar z}F(u)\overline{J(t)u}.
  \end{equation*}
\item It provides weighted Gagliardo-Nirenberg inequalities:
  \begin{align*}
    \|f\|_{L^r}\lesssim &
    \frac{1}{|t|^{\delta(r)}}\|f\|_{L^2}^{1-\delta(r)}\|J(t)f\|_{L^2}^{\delta(r)},
    \quad \delta(r)=d\(\frac{1}{2}-\frac{1}{r}\), \\
&\text{with }
\left\{
  \begin{aligned}
    2\le r\le \infty &\text{ if }d=1,\\
2\le r<\infty &\text{ if }d=2,\\
2\le r\le \frac{2d}{d-2}&\text{ if }d\ge 3. 
  \end{aligned}
\right.
  \end{align*}
\end{itemize}
The last two properties stem from the factorization $J(t)f =
it e^{i\frac{|y|^2}{2t}}\nabla \(e^{-i\frac{|y|^2}{2t}}f\)$. Note that
the commutation property does not incorporate the quadratic potential:
\begin{align*}
  \left[ i\d_t -H_Q,J\right]= itQ(t)y=itQ(t)J(t) +t^2 Q(t)\nabla. 
\end{align*}
Now the important remark is that $t\mapsto t^2Q(t)$ is integrable,
from \eqref{eq:decayQ} since $\mu>1$.  
\smallbreak

To prove Proposition~\ref{prop:wave-class}, we apply a fixed point argument 
to the Duhamel's formula \eqref{eq:duhamel-wave-class}. As in the case
of the quantum scattering operator, we have to deal with the fact that
the gradient does not commute with $U_Q$, leading to the problem
described in Section~\ref{sec:vector-field}. Above, we have sketched
how to deal with the inhomogeneous term in
\eqref{eq:duhamel-wave-class}, while in
Section~\ref{sec:vector-field}, we had underscored the difficulty
related to the homogeneous term. We therefore start by showing that
for any admissible pair $(q_1,r_1)$, there exists $K_{q_1}$ such that
\begin{equation}\label{eq:Qhomo}
  \|\nabla U_Q(t,0)f\|_{L^{q_1}(\R;L^{r_1})} +
  \|J(t)U_Q(t,0)f\|_{L^{q_1}(\R;L^{r_1})} \le K_{q_1} \|f\|_{\Sigma}. 
\end{equation}
To prove this, denote 
\begin{equation*}
  v_0(t)=U_Q(t,0)f,\quad v_1(t)= \nabla U_Q(t,0)f, \quad v_2(t)=
J(t)U_Q(t,0)f.
\end{equation*}
Since $yv_0 =v_2-it v_1$, we have:
\begin{align*}
  &i\d_t v_1=H_Q v_1 +Q(t)yv_0 = Hv_1 +Q(t)v_2-it Q(t)v_1;\quad
    v_1(0,y)=\nabla f(y),\\
& i\d_t v_2 = H_Qv_2 +itQ(t)v_2+t^2Q(t)v_1;\quad v_2(0,y)=yf(y). 
\end{align*}
Lemma~\ref{lem:strichartz-quad} yields
\begin{align*}
  \|v_1\|_{L^{q_1}(\R;L^{r_1})} + \|v_2\|_{L^{q_1}(\R;L^{r_1})}
  &\lesssim \|f\|_\Sigma + \int_{-\infty}^\infty \|\<t\>Q(t)v_2(t)\|_{L^2}dt \\
&\quad +
  \int_{-\infty}^\infty \|\<t\>^2Q(t)v_1(t)\|_{L^2}dt ,
\end{align*}
where we have chosen $(q_2,r_2)=(\infty,2)$. The fact that $U_Q$ is
unitary on $L^2$ and \eqref{eq:decayQ} imply
\begin{equation*}
  \|\<t\>Q(t)v_2(t)\|_{L^2}\lesssim \<t\>^{-\mu-1}\|yf\|_{L^2},\quad
  \|\<t\>^2Q(t)v_1(t)\|_{L^2}\lesssim \<t\>^{-\mu}\|\nabla f\|_{L^2}, 
\end{equation*}
hence \eqref{eq:Qhomo}. 
We then apply a fixed point
argument in
\begin{align*}
  X(T) =&\Big\{ u\in L^\infty((-\infty,-T];H^1),  \\
& \quad\sum_{B\in \{{\rm Id}, \nabla, J\}}
\( \|B   u\|_{L^\infty((-\infty,-T];L^2)}+
\|B  u\|_{L^q((-\infty,-T];L^r)}\)\le {\mathbf K}\|\tilde u_-\|_{\Sigma}\Big\},
\end{align*}
where the admissible pair $(q,r)$ is given by
\begin{equation*}
  (q,r) = \(\frac{4\si+4}{d\si},2\si+2\),
\end{equation*}
 and the constant $\mathbf K$ is related to the constants $C_q$  from
 Strichartz inequalities 
(Lemma~\ref{lem:strichartz-quad}), and $K_q$  from
\eqref{eq:Qhomo}, whose value we do not try to optimize. The fixed
point argument is applied 
to the Duhamel's formula \eqref{eq:duhamel-wave-class}: we denote by
$\Phi(u)$ the left hand side, and let $u\in X(T)$. We have 
\begin{equation*}
  \|\Phi(u)\|_{L^\infty((-\infty,-T];L^2)}\le \|\tilde u_-\|_{L^2} + C
    \left\| |u|^{2\si}u\right\|_{L^{q'}_TL^{r'}},
\end{equation*}
where $L^a_T$ stands for $L^a((-\infty,-T])$. H\"older inequality
yields
\begin{equation*}
  \left\| |u|^{2\si}u\right\|_{L^{q'}_TL^{r'}} \le
  \|u\|_{L^k_TL^r}^{2\si} \|u\|_{L^q_TL^r},
\end{equation*}
where $k$ is given by
\begin{equation*}
  \frac{1}{q'}=\frac{1}{q}+\frac{2\si}{k},\text{ that is }k =
  \frac{4\si(\si+1)}{2-(d-2)\si}. 
\end{equation*}
Weighted Gagliardo-Nirenberg inequality and the definition of $X(T)$ yield
\begin{equation*}
  \|u(t)\|_{L^r}\lesssim \frac{1}{|t|^{\frac{d\si}{2\si+2}}}\|u_-\|_{\Sigma}.
\end{equation*}
We check that for $\si\ge \frac{2}{d}$, 
\begin{equation*}
  k\times \frac{d\si}{2\si+2} = \frac{2d\si^2}{2-(d-2)\si}\ge 2,
\end{equation*}
and so
\begin{equation*}
  \|u\|_{L^k_TL^r}^k =\O\(\frac{1}{T}\) \text{ as }T\to \infty. 
\end{equation*}
By using Strichartz estimates again,
\begin{equation*}
  \|\Phi(u)\|_{L^q_TL^r}\le C_q\|\tilde u_-\|_{L^2} + C
    \left\| |u|^{2\si}u\right\|_{L^{q'}_TL^{r'}},
\end{equation*}
which shows, like above, that if $T$ is sufficiently large,
$\|\Phi(u)\|_{L^q_TL^r}\le 2C_q\|\tilde u_-\|_{L^2}$. 
\smallbreak

We now apply $\nabla$ and $J(t)$ to $\Phi$, and get a closed system of
estimates:
\begin{align*}
  \nabla \Phi(u) &= \nabla U_Q(t,0)\tilde u_- - i \int_{-\infty}^t
  U_Q(t,s)\nabla \(|u|^{2\si}u(s)\)ds \\
& -i\int_{-\infty}^t U_Q(t,s)\(Q(s)J(s)\Phi(u)\)ds - \int_{-\infty}^t
  U_Q(t,s)\(sQ(s)\nabla\Phi(u)\)ds, \\
 J(t) \Phi(u) &= J(t) U_Q(t,0)\tilde u_- - i \int_{-\infty}^t
  U_Q(t,s)J(s) \(|u|^{2\si}u(s)\)ds \\
& +\int_{-\infty}^t U_Q(t,s)\(sQ(s)J(s)\Phi(u)\)ds - i\int_{-\infty}^t
  U_Q(t,s)\(s^2Q(s)\nabla\Phi(u)\)ds,
\end{align*}
where we have used the same algebraic properties as in the proof of
\eqref{eq:Qhomo}. Set 
\begin{equation*}
  M(T) = \sum_{B\in \{\nabla, J\}} \( \|B(t)\Phi(u)\|_{L^\infty_TL^2}
  + \|B(t)\Phi(u)\|_{L^q_TL^r}\).
\end{equation*}
Lemma~\ref{lem:strichartz-quad} and
\eqref{eq:Qhomo} yield
\begin{align*}
  M(T)&\lesssim \|\tilde u_-\|_\Sigma + \sum_{B\in \{\nabla, J\}}\left\||u|^{2\si}B
        u\right\|_{L^{q'}_TL^{r'}} \\
&\quad+\|\<t\>Q(t)J(t)\Phi(u)\|_{L^1_TL^2} +
        \|\<t\>^2Q(t)\nabla\Phi(u)\|_{L^1_TL^2}, 
\end{align*}
where we have also used the fact that $J(t)$ acts like a derivative on
gauge invariant nonlinearities. The same H\"older inequalities as
above yield
\begin{equation*}
  \left\||u|^{2\si}B
        u\right\|_{L^{q'}_TL^{r'}} \le
      \|u\|_{L^k_TL^r}^{2\si}\|Bu\|_{L^q_TL^r}\lesssim \frac{1}{T^{2\si/k}}\|Bu\|_{L^q_TL^r}.
\end{equation*}
On the other hand, from \eqref{eq:decayQ},
\begin{equation*}
  \|\<t\>Q(t)J(t)\Phi(u)\|_{L^1_TL^2} +
        \|\<t\>^2Q(t)\nabla\Phi(u)\|_{L^1_TL^2}\lesssim \frac{1}{T^{\mu-1}}M(T),
\end{equation*}
and so
\begin{equation*}
  M(T) \lesssim \|\tilde u_-\|_\Sigma  +
  \frac{1}{T^{2\si/k}}\sum_{B\in \{\nabla, J\}} \|Bu\|_{L^q_TL^r} + \frac{1}{T^{\mu-1}}M(T).
\end{equation*}
By choosing $T$ sufficiently large, we infer
\begin{equation*}
  M(T) \lesssim \|\tilde u_-\|_\Sigma  +
  \frac{1}{T^{2\si/k}}\sum_{B\in \{\nabla, J\}} \|Bu\|_{L^q_TL^r},
\end{equation*}
and we conclude that $\Phi$ maps $X(T)$ to $X(T)$ for $T$
sufficiently large. Up to choosing $T$ even larger, $\Phi$ is a
contraction on $X(T)$ with respect to the weaker norm $L^q_TL^r$,
since for $u,v\in X(T)$, we have 
\begin{align*}
  \|\Phi(u)-\Phi(v)\|_{L^q_TL^r}&\lesssim \left\| |u|^{2\si}u
    -|v|^{2\si}v\right\|_{L^{q'}_TL^{r'}}\lesssim \(
  \|u\|_{L^k_TL^r}^{2\si}+\|v\|_{L^k_TL^r}^{2\si}\)\|u-v\|_{L^q_TL^r}\\
&\lesssim \frac{1}{T^{2\si/k}}\|u-v\|_{L^q_TL^r},
\end{align*}
where we have used the previous estimate. Therefore, there exists
$T>0$ such that $\Phi$ has a unique fixed point in $X(T)$. This
solution actually belongs to $C(\R;\Sigma)$ from \cite{CaSi15}.  
Unconditional uniqueness (in $\Sigma$, without referring to mixed
space-time norms) stems from the approach in \cite{TzVi-p}. 
\end{proof}

\subsection{Vector field}
\label{sec:vector-field-Q}

  It is possible to construct a vector field adapted to the presence
  of $Q$, even though it is not needed to prove
  Proposition~\ref{prop:wave-class}. Such a vector field will be useful
  in Section~\ref{sec:cv}, and since its construction is very much in
  the continuity of Section~\ref{sec:mehler}, we present it now. Set,
  for a scalar function $f$, 
\begin{equation*}
  {\mathcal A} f= i W(t) e^{i\phi(t,y)}\nabla \(
    e^{-i\phi(t,y)}f\)= W(t) \(f\nabla \phi +i\nabla f\),
\end{equation*}
where $W$ is a matrix and the phase $\phi$ solves the eikonal equation
\begin{equation*}
 \d_t \phi +\frac{1}{2}|\nabla \phi|^2 + \frac{1}{2}\< Q(t)y,y\>=0. 
\end{equation*}
Since the underlying Hamiltonian is quadratic, $\phi$ has the form
\begin{equation*}
  \phi(t,y) = \frac{1}{2}\<K(t)y,y\>,
\end{equation*}
where $K(t)$ is a symmetric matrix. For $ {\mathcal A}$ to commute
with $i\d_t -H_Q$, we come up with the conditions
\begin{equation*}
   \dot K + K^2 + Q=0,\quad  \dot W = W \nabla^2\phi= WK .
\end{equation*}
We see that we can take $K=M_1$ as in the proof of
Lemma~\ref{lem:riccati}, and $ {\mathcal A}$ will then satisfy the
same three properties as $J$, up to the fact that the commutation
property now includes the quadratic potential. 
\smallbreak

Since the construction of this vector field
boils down to solving a matricial Riccati equation with initial data
prescribed at large time (see \eqref{eq:riccati}), we naturally
construct two vector fields $\mathcal A_\pm$, associated to $t\to \pm
\infty$. In view of Lemma~\ref{lem:riccati}, $\mathcal A_-$ is defined
on $(-\infty,-T]$, while $\mathcal A_+$ is defined
on $[T,\infty)$, for a common $T\gg 1$, with
\begin{equation*}
  \mathcal A_\pm  = W_\pm(t)\(\nabla \phi_\pm + i\nabla\), \quad
  \phi_\pm (t,y) = \frac{1}{2}\<K_\pm (t)y,y\>,
\end{equation*}
where $K_\pm$ and $W_\pm$ satisfy
\begin{equation*}
  \dot K_\pm +K_\pm^2+Q=0,\quad \dot W_\pm =W_\pm K_\pm,
\end{equation*}
so that Lemma~\ref{lem:riccati} also yields
\begin{equation}\label{eq:vector-asym}
  K_\pm(t)\sim  \frac{1}{t}{\rm I}_d,\quad  W_\pm (t)\sim t {\rm
    I}_d\quad \text{as }t\to \pm \infty.  
\end{equation} 
We construct commuting vector fields for large time only, essentially
because on finite time intervals, the absence of commutation is not a
problem, so we can use $\nabla$, $y$ or $J$.

\subsection{Asymptotic completeness}
\label{sec:ac-class}

In this section we prove:
\begin{proposition}\label{prop:AC-class}
  Let $d\ge 1$, $\frac{2}{d}\le \si<\frac{2}{(d-2)_+}$, and $V$
  satisfying Assumption~\ref{hyp:V} for some $\mu>1$. For all $u_0\in
  \Sigma$, there exists a unique $\tilde u_+\in \Sigma$ such that the
  solution $u\in C(\R;\Sigma)$ to \eqref{eq:u} with $u_{\mid t=0}=u_0$
  satisfies
  \begin{equation*}
   \sum_{\Gamma\in \{{\rm Id},\nabla, J\}} \|\Gamma(t) u(t)-
   \Gamma(t)U_Q(t,0)\tilde u_+\|_{L^2} \Tend t {+\infty} 0. 
  \end{equation*}
\end{proposition}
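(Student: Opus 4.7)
The plan is to carry over the classical Ginibre--Velo proof of asymptotic completeness in $\Sigma$ for the defocusing NLS (see \cite{GV79Scatt,CazCourant}) to the present equation \eqref{eq:u}, exploiting that the time-dependent quadratic potential has rapidly decaying coefficient: $\|Q(t)\|\lesssim\langle t\rangle^{-\mu-2}$ with $\mu>1$. Strichartz estimates for $U_Q$ are already at our disposal via Lemma~\ref{lem:strichartz-quad}, and the appropriate vector fields ($J(t)=y+it\nabla$ for finite times, and the adapted $\mathcal A_\pm$ for large times) have been introduced in Sections~\ref{sec:wave-class} and \ref{sec:vector-field-Q}. Global existence of $u\in C(\R;\Sigma)$ is provided by \cite{CaSi15}. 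Conservation of mass together with the modified energy
\begin{equation*}
E(t)=\frac12\|\nabla u\|_{L^2}^2+\frac1{\sigma+1}\|u\|_{L^{2\sigma+2}}^{2\sigma+2}+\frac12\langle Q(t)y u,yu\rangle,
\end{equation*}
whose time derivative equals $\frac12\langle\dot Q(t)yu,yu\rangle$, and Gronwall's lemma (using that $\|\dot Q(t)\|$ is integrable for $\mu>1$) yield $\sup_{t\in\R}\bigl(\|u(t)\|_{H^1}+\|yu(t)\|_{L^2}\bigr)<\infty$.

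The crucial step is an a priori bound on $\|J(t)u(t)\|_{L^2}$, obtained from the pseudo-conformal type quantity
\begin{equation*}
\mathcal F(t)=\frac12\|J(t)u(t)\|_{L^2}^2+\frac{t^2}{\sigma+1}\|u(t)\|_{L^{2\sigma+2}}^{2\sigma+2}.
\end{equation*}
Using the algebraic properties of $J(t)$ recalled in Section~\ref{sec:wave-class}, one computes
\begin{equation*}
\frac{d}{dt}\mathcal F(t)=\frac{2-d\sigma}{\sigma+1}\,t\,\|u(t)\|_{L^{2\sigma+2}}^{2\sigma+2}+\mathrm{Err}(t),
\end{equation*}
where the error term, arising from $[i\d_t-H_Q,J(t)]=itQ(t)J(t)+t^2Q(t)\nabla$, is controlled by $C\langle t\rangle^{-\mu}\bigl(\|J(t)u\|_{L^2}^2+\|\nabla u\|_{L^2}^2\bigr)$. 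Since $\sigma\ge2/d$ makes the first term nonpositive for $t\ge0$, and $\langle t\rangle^{-\mu}$ is integrable, Gronwall's inequality yields $\sup_{t\ge0}\|J(t)u(t)\|_{L^2}<\infty$ and $\|u(t)\|_{L^{2\sigma+2}}^{2\sigma+2}\lesssim\langle t\rangle^{-2}$. Weighted Gagliardo--Nirenberg then gives $\|u(t)\|_{L^r}\lesssim\langle t\rangle^{-\delta(r)}$ for every admissible $r$.

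Writing the would-be limit as
\begin{equation*}
\tilde u_+:=U_Q(0,t)u(t)+i\int_0^tU_Q(0,s)\bigl(|u|^{2\sigma}u\bigr)(s)\,ds,
\end{equation*}
Strichartz estimates from Lemma~\ref{lem:strichartz-quad}, combined with the above decay, show that $\tilde u_+$ is Cauchy in $L^2$ as $t\to+\infty$, and that the residual integral $\int_t^{+\infty}U_Q(t,s)(|u|^{2\sigma}u)(s)ds$ tends to $0$ in $L^2$. To upgrade the convergence to $\Sigma$, the $\nabla$ and $J(t)$ components are propagated through Duhamel by applying the same estimates after commuting $\nabla$ and $J(t)$ with $i\d_t-H_Q$; the commutators generate only summable corrections (via \eqref{eq:decayQ} and the uniform bounds just obtained). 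For large times it is more convenient to use the commuting vector field $\mathcal A_+$ of Section~\ref{sec:vector-field-Q}, whose symbol is asymptotically $J(t)$ by \eqref{eq:vector-asym}. Uniqueness of $\tilde u_+$ is immediate since $U_Q(0,t)$ is an $L^2$-isometry.

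The main obstacle is the a priori control of $\|J(t)u(t)\|_{L^2}$ in the $L^2$-critical or near-critical regime: the pseudo-conformal identity has the correct sign only thanks to $\sigma\ge2/d$, and the perturbative error $\mathrm{Err}(t)$ must be tamed by a Gronwall argument, which succeeds precisely because $\langle t\rangle^2 Q(t)$ is integrable, i.e.\ $\mu>1$. A weaker rate of decay on $V$ would break this step, which explains why the same assumption $\mu>1$ appears throughout Section~\ref{sec:class}.
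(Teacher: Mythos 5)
Your argument follows essentially the same route as the paper's proof: establish the a priori $H^1$-bound, derive the pseudo-conformal identity for $J(t)=y+it\nabla$ along \eqref{eq:u}, absorb the commutator errors $itQ(t)J+t^2Q(t)\nabla$ via Gronwall (this is where $\mu>1$ enters), conclude $Ju\in L^\infty(\R_+;L^2)$, then use Strichartz for $U_Q$ and a Cauchy criterion to produce $\tilde u_+$ and to propagate $\nabla$ and $J$. The error functional, the sign condition $\sigma\ge 2/d$, and the role of \eqref{eq:decayQ} all match. The one point the paper handles differently is the $H^1$-bound: it cites \cite{Ha13} (noting the argument there is an energy estimate of the type you write out), whereas you perform the modified-energy computation explicitly.

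There is, however, one incorrect claim that should be repaired. You assert that the energy estimate plus Gronwall yield
\begin{equation*}
\sup_{t\in\R}\bigl(\|u(t)\|_{H^1}+\|yu(t)\|_{L^2}\bigr)<\infty.
\end{equation*}
The bound on $\|yu(t)\|_{L^2}$ cannot be uniform in time: already for the free equation one has $\|yu(t)\|_{L^2}\sim |t|$, and the paper's Proposition~\ref{prop:extra-u} confirms the growth rate $\||y|^\ell u(t)\|_{L^2}\lesssim\<t\>^\ell$. What the energy estimate gives, combined with $\tfrac{d}{dt}\|yu\|_{L^2}\lesssim\|\nabla u\|_{L^2}$, is only $\|yu(t)\|_{L^2}\lesssim\<t\>$. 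That linear growth is in fact exactly what makes the error in $\dot E(t)=\tfrac12\int\<\dot Q(t)y,y\>|u|^2$ integrable, since $\<t\>^{-\mu-3}\<t\>^2=\<t\>^{-\mu-1}$ with $\mu>1$; so the energy remains bounded, but one should run the estimate as a coupled bootstrap on $(\|\nabla u\|_{L^2},\|yu\|_{L^2})$ rather than a direct Gronwall on $E$ alone. None of this affects the rest of your argument, because the quantities actually fed into the pseudo-conformal step and the final Strichartz estimates are $\|\nabla u\|_{L^2}$ and $\|J(t)u\|_{L^2}$, not $\|yu\|_{L^2}$.

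One small stylistic remark: you suggest switching to $\mathcal A_+$ at large times. This is not needed here (and the paper does not do it for this proposition): the statement is phrased precisely in terms of $\Gamma(t)\bigl(u(t)-U_Q(t,0)\tilde u_+\bigr)$ with $\Gamma\in\{\mathrm{Id},\nabla,J\}$, and the commutator corrections of $J$ and $\nabla$ with $i\d_t-H_Q$ already produce the integrable weights $\<t\>^{-\mu-1}$ and $\<t\>^{-\mu}$ used in the Strichartz estimates from the proof of Proposition~\ref{prop:wave-class}. The adapted vector fields $\mathcal A_\pm$ are deployed by the paper in Section~\ref{sec:extra-u}, where higher powers $J^k$ would produce non-integrable commutator terms; for the present proposition $J$ alone suffices. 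Indeed, as the paper's Remark after the proposition notes, $J$ is preferred here precisely because it admits an exploitable pseudo-conformal identity, which $\mathcal A_\pm$ apparently do not.
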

\begin{proof}
  In the case $Q=0$, such a result is a rather direct consequence of
  the \emph{pseudo-conformal conservation law}, established in
  \cite{GV79Scatt}. Recalling that $J(t)=y+it\nabla$, this law reads
\begin{equation*}
  \frac{d}{dt}\(\frac{1}{2}\|J(t)u\|_{L^2}^2
  +\frac{t^2}{\si+1}\|u(t)\|_{L^{2\si+2}}^{2\si+2}\)
  =\frac{t}{\si+1}(2-d\si)\|u(t)\|_{L^{2\si+2}}^{2\si+2}. 
\end{equation*}
A way to derive this relation is to apply $J$ to \eqref{eq:u}. The operator $J$
commutes with the linear part ($Q=0$), and the standard $L^2$
estimate, which consists in multiplying the outcome by
$\overline{Ju}$, integrating in space, and taking the imaginary part, yields:
\begin{equation*}
\frac{1}{2} \frac{d}{dt}\|J(t)u\|_{L^2}^2  = \IM \int \overline {Ju}J\(|u|^{2\si}u\).
\end{equation*}
Since we have $J= i t  e^{i\frac{|y|^2}{2t}} \nabla\( \cdot
e^{-i\frac{|y|^2}{2t}}\)$, 
\begin{equation*}
  J\(|u|^{2\si}u\) = (\si+1)|u|^{2\si}Ju + \si u^{\si+1}\bar
  u^{\si-1}\overline{Ju}.
\end{equation*}
The first term is real, and the rest of the computation consists in
expanding the remaining term. 
\smallbreak

In the case where $Q\not =0$, we resume the above approach: the new
contribution is due to the fact that $J$ does not commute with the
external potential, so we find:
\begin{align*}
  \frac{1}{2} \frac{d}{dt}\|J(t)u\|_{L^2}^2 & =\text{like before} + \RE \int
  t Q(t)xu\cdot \overline {Ju}\\
&=\text{like before} + 
  t\RE\int_{\R^d} \<Q(t) J(t)u,J(t)u\> +t^2 \IM \int_{\R^d} \<Q(t)\nabla u,Ju\>.
\end{align*}
On the other hand, we still have
\begin{align*}
  \frac{d}{dt}\|u(t)\|_{L^{2\si+2}}^{2\si+2}& =2 (\si+1)\int
  |u|^{2\si}\RE \(\bar u\d_tu\) = 2 (\si+1)\int
  |u|^{2\si}\RE \(\bar u \times\frac{i}{2}\Delta u\) ,
\end{align*}
and so,
\begin{align*}
  \frac{d}{dt}\(\frac{1}{2}\|J(t)u\|_{L^2}^2
  +\frac{t^2}{\si+1}\|u(t)\|_{L^{2\si+2}}^{2\si+2}\)
  &=\frac{t}{\si+1}(2-d\si)\|u(t)\|_{L^{2\si+2}}^{2\si+2}\\
+ 
 t\RE\int_{\R^d} \<Q(t) J(t)u,J(t)u\>& +t^2 \IM \int_{\R^d} \<Q(t)\nabla u,Ju\> . 
\end{align*}
Thus for $t\ge 0$ and $\si\ge\frac{2}{d}$, \eqref{eq:decayQ} implies
\begin{equation*}
  \frac{d}{dt}\(\frac{1}{2}\|J(t)u\|_{L^2}^2
  +\frac{t^2}{\si+1}\|u(t)\|_{L^{2\si+2}}^{2\si+2}\)
  \lesssim  
  \<t\>^{-\mu-1}\|J(t)u\|_{L^2}^2 +\<t\>^{-\mu}\| \nabla u\|_{L^2}  \|Ju\|_{L^2}. 
\end{equation*}
Even though there is no conservation of the energy for \eqref{eq:u}
since the potential depends on time, we know from \cite{Ha13} that $u\in
L^\infty(\R;H^1(\R^d))$. As a matter of fact, the proof given in
\cite[Section~4]{Ha13} concerns the case $\si=1$ in $d=2$ or $3$, but
the argument, based on energy estimates,  remains valid for $d\ge 1$,
$\si<\frac{2}{(d-2)_+}$, since we then know that $u\in
C(\R;\Sigma)$. Since $\mu>1$, we infer
\begin{equation}\label{eq:Jborne}
  Ju\in L^\infty(\R_+;L^2). 
\end{equation}
Writing Duhamel's formula for \eqref{eq:u} with initial datum $u_0$,
in terms of $U_Q$, we have
\begin{equation*}
  u(t) = U_Q(t,0)u_0-i\int_0^t U_Q(t,s)\(|u|^{2\si}u(s)\)ds.
\end{equation*}
Resuming the computations presented in the proof of
Proposition~\ref{prop:wave-class}, \eqref{eq:Jborne} and (weighted)
Gagliardo-Nirenberg inequalities make it possible to prove that
\begin{equation*}
  Bu \in L^{q_1}(\R_+;L^{r_1}),\ \forall (q_1,r_1)\text{ admissible},
  \ \forall B\in \{{\rm Id},\nabla, J\}. 
\end{equation*}
Duhamel's formula then yields, for $0<t_1<t_2$,
\begin{equation*}
  U_Q(0,t_2)u(t_2)-U_Q(0,t_1)u(t_1) =
  -i\int_{t_1}^{t_2}U_Q(0,s)\(|u|^{2\si}u(s)\)ds. 
\end{equation*}
From Strichartz estimates,
\begin{equation*}
  \| U_Q(0,t_2)u(t_2)-U_Q(0,t_1)u(t_1) \|_{L^2}\lesssim \left\|
    |u|^{2\si}u\right\|_{L^{q'}([t_1,t_2]:L^{r'})}, 
\end{equation*}
and the right hand side goes to zero as $t_1,t_2\to
+\infty$. Therefore, there exists (a unique) $\tilde u_+\in L^2$ such that
\begin{equation*}
   \| U_Q(0,t)u(t)-\tilde u_+ \|_{L^2}\Tend t {+\infty} 0 ,
\end{equation*}
and we have
\begin{equation*}
  u(t) = U_Q(t,0)\tilde u_+ +i\int_t^\infty
  U_Q(t,s)\(|u|^{2\si}u(s)\)ds. 
\end{equation*}
Using the same estimates as in the proof of
Proposition~\ref{prop:wave-class}, we infer
\begin{align*}
  \|\nabla u(t) - \nabla  U_Q(t,0)\tilde u_+\|_{L^2} &+  \|J(t) u(t) -
  J(t)  U_Q(t,0)\tilde u_+\|_{L^2} \\
& \lesssim \left\| |u|^{2\si}\nabla
  u\right\|_{L^{q'}(t,\infty;L^{r'})} +  \left\| |u|^{2\si} J
  u\right\|_{L^{q'}(t,\infty;L^{r'})}  \\
&\quad+
  \|\<s\>^{-\mu-1}J(s)u\|_{L^1(t,\infty;L^2)} +  \|\<s\>^{-\mu}\nabla u\|_{L^1(t,\infty;L^2)}. 
\end{align*}
The right hand side goes to zero as $t\to \infty$, hence the
proposition. 
\end{proof}

\begin{remark}
  As pointed out in the previous section, it would be possible to
  prove the existence of wave operators by using an adapted vector
  field $\mathcal A$. On the other hand, if $Q(t)$ is not proportional
  to the identity matrix, it seems that no (exploitable) analogue of
  the pseudo-conformal conservation law is available in terms of
  $\mathcal A$ rather than in terms of $J$. 
\end{remark}
\subsection{Conclusion}
\label{sec:concl-class}

Like in the case of quantum scattering, we use a stronger version of
the linear scattering theory:
\begin{proposition}\label{prop:Cook-class}
  Let $d\ge 1$, $V$ satisfying Assumption~\ref{hyp:V} with $\mu>1$. Then 
the strong limits
 \begin{equation*}
  \lim_{t\to \pm \infty} U_Q(0,t)U(t) \quad \text{and}\quad  \lim_{t\to
    \pm\infty} U(-t) U_Q(t,0) \quad \text{and}\quad  
\end{equation*}
exist in $\Sigma$. 
\end{proposition}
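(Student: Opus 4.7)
The plan is to apply Cook's method on the dense subspace $\Sch(\R^d)\subset \Sigma$, and extend to $\Sigma$ by density. Consider first $\Omega(t) := U_Q(0,t)U(t)$. For $f\in\Sch(\R^d)$, a direct calculation gives
\begin{equation*}
\frac{d}{dt}\Omega(t)f = \frac{i}{2}\,U_Q(0,t)\<Q(t)y,y\> U(t)f,
\end{equation*}
using $\d_t U(t) = (i/2)\Delta U(t)$ and $\d_t U_Q(0,t)= iU_Q(0,t)H_Q(t)$. Similarly,
\begin{equation*}
\frac{d}{dt}\bigl[U(-t)U_Q(t,0)f\bigr] = -\frac{i}{2}\,U(-t)\<Q(t)y,y\> U_Q(t,0)f.
\end{equation*}
It suffices to show each right-hand side is integrable in $\Sigma$-norm over $\{|t|>T\}$ for $T$ large; the Cauchy criterion then delivers convergence in $\Sigma$ on $\Sch(\R^d)$.

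The $L^2$-component is the cleanest. Using $L^2$-unitarity of $U_Q(0,t)$ and $U(-t)$, it reduces to bounding $\|\<Q(t)y,y\> U(t)f\|_{L^2}$ and $\|\<Q(t)y,y\> U_Q(t,0)f\|_{L^2}$. For the former, iterating $yU(t)=U(t)(y-it\nabla)$ yields $\||y|^2 U(t)f\|_{L^2}\lesssim \<t\>^2\|f\|_{\Sigma^2}$; combined with $\|Q(t)\|\lesssim\<t\>^{-\mu-2}$ from \eqref{eq:decayQ}, this makes the integrand of order $\<t\>^{-\mu}$, integrable for $\mu>1$. For the latter, an analogous growth bound holds for $U_Q(t,0)f$: by the commutator computations underlying \eqref{eq:Qhomo} in the proof of Proposition~\ref{prop:wave-class}, the vector fields $\nabla$ and $J(t) = y+it\nabla$ can be propagated through $U_Q$ via Gronwall, using that $\<t\>^k Q(t)\in L^1$ for $k\le 2$ when $\mu>1$. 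The $\nabla$- and $y$-weighted components of the $\Sigma$-norm of the integrand are treated by the same commutator strategy: applying $\nabla$ or $y$ produces source terms of the form $Q(t)y\cdot(\cdot)$ or $\<Q(t)y,y\>\nabla(\cdot)$, all of which are integrable in $t$ thanks to the same decay of $Q$.

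The main obstacle is the extension from $\Sch(\R^d)$ to $\Sigma$: the Cook estimates involve $\Sigma^2$-type norms of $f$, and the operator $U(t)$ is \emph{not} uniformly bounded on $\Sigma$ (weighted norms grow linearly in $t$). One resolves this by establishing uniform-in-$t$ bounds on the compositions $U_Q(0,t)U(t)$ and $U(-t)U_Q(t,0)$ as maps $\Sigma\to\Sigma$: the linear-in-$t$ growth of weighted norms under $U(t)$, captured by $yU(t)f = U(t)(y-it\nabla)f$, reorganizes after composition with $U_Q(0,t)$ into the commuting vector field $J(t)$ (or equivalently $\mathcal{A}_\pm$ from Section~\ref{sec:vector-field-Q} at large time), whose $L^2$-norm along the $U_Q$ evolution is controlled uniformly in $t$ by \eqref{eq:Qhomo}. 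A $3\varepsilon$-argument then extends strong convergence from $\Sch(\R^d)$ to all of $\Sigma$, giving the proposition.
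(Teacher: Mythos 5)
Your proposal is correct and follows essentially the same route as the paper: Cook's method on $\Sch(\R^d)$, $L^2$-unitarity of the propagators, the commutation $y_j U(t)=U(t)(y_j-it\d_j)$ to control the weighted terms, propagation of $\nabla$ and $J$ through $U_Q$ via the Gronwall/Duhamel argument behind \eqref{eq:Qhomo}, and integrability from the decay of $Q$. The only difference is that you spell out the density/uniform-boundedness step needed to pass from $\Sch(\R^d)$ to $\Sigma$, which the paper treats tersely by "invoking \eqref{eq:Qhomo}."
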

\begin{proof}
  For the first limit (existence of wave operators), again in view of
  Cook's method, we prove that for all $\varphi\in 
  \Sch(\R^d)$, 
\begin{equation*}
    t\mapsto \left\| U_Q(0,t) \<Q(t)y,y\>U(t)\varphi\right\|_{\Sigma}\in
    L^1(\R). 
  \end{equation*}
For the $L^2$ norm, we have, in view of \eqref{eq:decayQ},
\begin{equation*}
  \left\| U_Q(0,t) \<Q(t)y,y\>U(t)\varphi\right\|_{L^2} \lesssim
  \<t\>^{-\mu-2}\sum_{j=1}^d\| y_j^2 U(t)\varphi\|_{L^2}.
\end{equation*}
Write
\begin{equation*}
  y_j^2 = (y_j+it\d_j)^2 +t^2\d_j^2 -2ity_j\d_j =  (y_j+it\d_j)^2
  -t^2\d_j^2 -2it(y_j+it\d_j)\d_j,
\end{equation*}
to take advantage of the commutation
\begin{equation*}
  (y_j+it\d_j)U(t) = U(t)y_j,
\end{equation*}
and infer
\begin{equation*}
  \left\| U_Q(0,t) \<Q(t)y,y\>U(t)\varphi\right\|_{L^2} \lesssim
  \<t\>^{-\mu-2}\(\||y|^2\varphi\|_{L^2} +t^2\|\Delta \varphi\|_{L^2}
  \)\lesssim \<t\>^{-\mu}.
\end{equation*}
The right hand side is integrable since $\mu>1$, so the strong limits
\begin{equation*}
   \lim_{t\to \pm\infty} U_Q(0,t)U(t)
\end{equation*}
exist in $L^2$. 
To infer that these strong limits actually exist in $\Sigma$, we
simply invoke \eqref{eq:Qhomo} in the case $(q_1,r_1)=(\infty,2)$, so
the above computation are easily adapted. 
\smallbreak

For asymptotic completeness, we can adopt the same strategy. Indeed,
it suffices to prove that  for all $\varphi\in 
  \Sch(\R^d)$, 
\begin{equation*}
    t\mapsto \left\| U(-t) \<Q(t)y,y\>U_Q(t,0)\varphi\right\|_{\Sigma}\in
    L^1(\R). 
  \end{equation*}
For the $L^2$ norm, we have
\begin{align*}
  \left\| U(-t) \<Q(t)y,y\>U_Q(t,0)\varphi\right\|_{L^2}&= \left\|
   \<Q(t)y,y\>U_Q(t,0)\varphi\right\|_{L^2}\\
& \lesssim
 \<t\>^{-\mu-2}\sum_{j=1}^d  \left\|
   y_j^2U_Q(t,0)\varphi\right\|_{L^2}.
\end{align*}
We first proceed like above, and write
\begin{equation*}
  y_j^2 =  (y_j+it\d_j)^2
  -t^2\d_j^2 -2it(y_j+it\d_j)\d_j.
\end{equation*}
The operator $J$ does not commute with $U_Q$, but this lack of
commutation is harmless for our present goal, from
\eqref{eq:Qhomo}. By considering the system satisfied by
$$(y_j+it\d_j)^2U_Q(t,0)\varphi, \d_j^2 U_Q(t,0)\varphi,
\d_j(y_j+it\d_j)U_Q(t,0)\varphi,$$ 
we obtain 
\begin{align*}
 \sum_{j=1}^d&\( \| (y_j+it\d_j)^2U_Q(t,0)\varphi\|_{L^2} + \|\d_j^2
 U_Q(t,0)\varphi\|_{L^2} +
 \|\d_j(y_j+it\d_j)U_Q(t,0)\varphi\|_{L^2}\)\\
&\le C \|\varphi\|_{\Sigma^2},
\end{align*}
where $\Sigma^k$ is the space of $H^k$ functions with $k$ momenta in
$L^2$, and $C$ does not depend on time. Finally, we also have a
similar estimate by considering one more derivative or momentum. The
key remark in the computation is that the external
potential $\<Q(t)y,y\>$ is exactly quadratic in space, and so
differentiating it three times with any space variables yields zero. 
\end{proof}

\section{Proof of Theorem~\ref{theo:cv}}
\label{sec:cv}

The main result of this section is:
\begin{theorem}\label{theo:cv-unif}
  Let $d=3$, $\si=1$, $V$ as in Theorem~\ref{theo:scatt-quant}, and
  $u_-\in \Sigma^7$. Suppose that Assumption~\ref{hyp:flot} 
  is satisfied. Let $\psi^\eps$ be given by
  Theorem~\ref{theo:scatt-quant}, $u$ be given by
  Theorem~\ref{theo:scatt-class}, $\varphi^\eps$ defined by
  \eqref{eq:phi}.  We have
  the uniform error 
  estimate:
  \begin{equation*}
   \sup_{t\in \R}\|\psi^\eps(t)-\varphi^\eps(t)\|_{L^2(\R^3)}  =
    \O\(\sqrt\eps\). 
  \end{equation*}
\end{theorem}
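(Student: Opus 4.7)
The plan is to work with the rescaled error. Via the change of variable \eqref{eq:chginc}, the desired bound $\sup_t\|\psi^\eps(t)-\varphi^\eps(t)\|_{L^2}=O(\sqrt\eps)$ is equivalent to $\sup_t\|u^\eps(t)-u(t)\|_{L^2}=O(\sqrt\eps)$, where $u^\eps$ solves
\begin{equation*}
i\d_t u^\eps+\tfrac12\Delta u^\eps=V^\eps u^\eps+|u^\eps|^2u^\eps
\end{equation*}
(the coupling $\eps^{5/2}$ is tuned so that the cubic factor is clean in $d=3$, $\si=1$), and $u$ solves \eqref{eq:u}. The error $r^\eps:=u^\eps-u$ then satisfies
\begin{equation*}
i\d_t r^\eps+\tfrac12\Delta r^\eps=V^\eps r^\eps+\mathcal R^\eps+\mathcal N^\eps,
\end{equation*}
with source $\mathcal R^\eps:=\bigl(V^\eps-\tfrac12\<Q(t)y,y\>\bigr)u$ (the cubic Taylor remainder of $V$ about $q(t)$) and nonlinear difference $\mathcal N^\eps:=|u^\eps|^2u^\eps-|u|^2u$. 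Matching at $t=-\infty$ follows by combining Theorems~\ref{theo:scatt-quant} and \ref{theo:scatt-class} with the linear scattering link $\lim_{t\to-\infty}U(-t)U_Q(t,0)$: both envelopes reduce to the same leading-order image of $u_-$, so $\|r^\eps(t)\|_{L^2}\to 0$.

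Next I would bound $\mathcal R^\eps$ uniformly in time. Taylor expansion gives
\begin{equation*}
V^\eps(t,y)-\tfrac12\<Q(t)y,y\>=\tfrac{\sqrt\eps}{6}\<\nabla^3V\bigl(q(t)+\theta\sqrt\eps\,y\bigr),y^{\otimes 3}\>
\end{equation*}
for some $\theta\in[0,1]$. Assumption~\ref{hyp:V} with $\mu>2$ gives $|\nabla^3V(\xi)|\lesssim\<\xi\>^{-\mu-3}$, while Assumption~\ref{hyp:flot} forces $|q(t)|\gtrsim|t|$ as $|t|\to\infty$. Splitting $y$ into $|y|\le|q(t)|/(2\sqrt\eps)$ (where $|q(t)+\sqrt\eps y|\ge|q(t)|/2$) and its complement (absorbed by spatial decay of $u$, since $u\in L^\infty_t\Sigma^k$ propagates from $u_-\in\Sigma^7$ through \eqref{eq:u}), I expect
\begin{equation*}
\|\mathcal R^\eps(t)\|_{L^2}\lesssim\sqrt\eps\,\<t\>^{-\mu-3}\|u\|_{L^\infty_t\Sigma^3}+\sqrt\eps\,\<t\>^{-N}\|u\|_{L^\infty_t\Sigma^k}
\end{equation*}
for any prescribed $N$, by taking $k$ large. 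In particular $\mathcal R^\eps\in L^1_tL^2_x$ with norm $O(\sqrt\eps)$, since $\mu+3>1$.

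The final step is an $L^2$ energy identity. Since $V^\eps$ is real, its self-adjoint part drops out and
\begin{equation*}
\tfrac{d}{dt}\|r^\eps\|_{L^2}^2\le C\bigl(\|u\|_{L^\infty}^2+\|r^\eps\|_{L^\infty}^2\bigr)\|r^\eps\|_{L^2}^2+2\|\mathcal R^\eps\|_{L^2}\|r^\eps\|_{L^2}.
\end{equation*}
By the dispersive estimate for $U_Q$ from Mehler's formula (Section~\ref{sec:mehler}) applied to the scattering approximation $u(t)\approx U_Q(t,0)\tilde u_+$, one has $\|u(t)\|_{L^\infty}\lesssim\<t\>^{-3/2}$, so $\|u\|_{L^\infty}^2\in L^1(\R)$. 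Gronwall together with the source control then yields the desired uniform $O(\sqrt\eps)$ bound, provided $\|r^\eps\|_{L^\infty}^2$ is small. This last control is the main obstacle: one must bootstrap $\|r^\eps\|_{H^2}\le C\sqrt\eps$ uniformly in $t\in\R$, which requires combining the Strichartz estimates of Lemma~\ref{lem:strichartz-quad} for $U_Q$, those of Proposition~\ref{prop:StrichartzRS} for $H^\eps$, and the adapted vector fields $\mathcal A_\pm$ from Section~\ref{sec:vector-field-Q} to handle the non-commutation of $\nabla$ with $V^\eps$. Precisely this bootstrap is what forces the restrictions $d=3$ (for Goldberg's dispersive estimate underlying Proposition~\ref{prop:StrichartzRS}), $\mu>2$ (both for Strichartz and $L^1_t$-integrability of $\mathcal R^\eps$), and $u_-\in\Sigma^7$ (Sobolev headroom to absorb the losses when propagating regularity through the $\dot H^{1/2}$-critical cubic nonlinearity).
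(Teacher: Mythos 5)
Your proposal diverges from the paper's proof at the decisive step and, as written, has a genuine gap. The rescaling to $r^\eps=u^\eps-u$ and the identification of the source term $\mathcal R^\eps=\mathcal S^\eps$ are fine (and equivalent, after the change of variable \eqref{eq:chginc}, to the paper's $w^\eps$ and $\frac1\eps\mathcal L^\eps$). But the $L^2$ energy/Gronwall argument cannot be closed as you set it up: to absorb the quadratic-in-$r^\eps$ terms (the cubic one $|r^\eps|^4$ has vanishing imaginary part, but $u|r^\eps|^2\bar r^\eps$ does not), Gronwall forces you to know $\|u\|_{L^\infty}\|r^\eps\|_{L^\infty}\in L^1_t$, i.e.\ a uniform-in-$t$, uniform-in-$\eps$ control of $\|r^\eps\|_{L^\infty}$. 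You acknowledge this and propose a bootstrap for $\|r^\eps\|_{H^2}\le C\sqrt\eps$, but this bootstrap is not carried out, and it is in fact the entire difficulty: propagating $H^2$ bounds for $r^\eps$ requires commuting two derivatives through $V^\eps$ and through the cubic nonlinearity, uniformly in $\eps$ and on an infinite time interval, which is of the same order of hardness as the original statement. The paper circumvents this altogether by working directly on $w^\eps=\psi^\eps-\varphi^\eps$ with the \emph{scaled} Strichartz estimates of Lemmas~\ref{lem:stri-eps}--\ref{lem:stri-inhom-eps} (built on Goldberg's $d=3$ dispersive estimate), so that only $L^2$-level and mixed $L^q_tL^r_x$ information on $w^\eps$ is needed. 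The crucial intermediate step is the a~priori $\dot H^{1/2}$-critical estimate (Proposition~\ref{prop:w-crit}), $\eps^{1/q}\|w^\eps\|_{L^qL^r}\lesssim\eps^{1/4}$, obtained by a bilinear splitting and a bootstrap over finitely many time intervals; plugging this back into Strichartz for the $L^2$-admissible pair closes the argument without ever invoking $L^\infty_x$ or $H^2$ control of the error.

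A secondary issue: your source estimate hand-waves the outer region $|y|\gtrsim|q(t)|/\sqrt\eps$. You claim arbitrary polynomial decay $\sqrt\eps\<t\>^{-N}$ there from ``spatial decay of $u$''; but Proposition~\ref{prop:extra-u} gives $\||y|^\ell u(t)\|_{L^2}\lesssim\<t\>^\ell$, so the weighted norms \emph{grow} in $t$, and the decay you can extract is limited. The paper's Proposition~\ref{prop:est-source} must go back to the un-Taylored $\mathcal L^\eps$ on that region and use $\|u(t)\|_{L^\infty(\Omega)}$ with a cutoff and weighted Gagliardo--Nirenberg to achieve $\sqrt\eps\<t\>^{-3/2}$ in $L^2$ \emph{and} the companion $L^{3/2}$ bound (the latter is essential for the inhomogeneous non-admissible Strichartz estimate with a $\dot H^{-1/2}$-admissible pair, so it cannot be dropped). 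So while both approaches rest on the same source term, the actual proof of its decay and the functional framework exploiting it are different, and the energy-method route you outline does not yield the stated uniform estimate without supplying the missing $L^\infty$/$H^2$ propagation.
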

Theorem~\ref{theo:cv} is a direct consequence of the above
result, whose proof is the core of
Section~\ref{sec:cv}. From now on, we assume $d=3$ and $\si=1$. 
\subsection{Extra properties for the approximate solution}
\label{sec:extra-u}

Further regularity and localization properties on $u$ will be
needed. 
\begin{proposition}\label{prop:extra-u}
  Let $\si=1$, $1\le d\le 3$, $k\ge 2$ and $V$ satisfying
  Assumption~\ref{hyp:V} for some $\mu>1$. If $u_-\in \Sigma^k$, then
  the solution $u\in C(\R;\Sigma)$ provided by Theorem~\ref{theo:scatt-class}
  satisfies $u\in C(\R;\Sigma^k)$.  The momenta
  of $u$ satisfy
  \begin{equation*}
    \lVert \lvert y\rvert^\ell u(t,y)\|_{L^2(\R^d)}\le C_\ell
    \<t\>^\ell,\quad 0\le \ell\le k,
  \end{equation*}
where $C_\ell$ is independent of $t\in \R$.
 \end{proposition}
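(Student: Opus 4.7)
The plan is to argue by induction on $k\ge 2$, with base case $k=1$ supplied by Theorem~\ref{theo:scatt-class} together with \eqref{eq:Jborne}. Supposing the conclusion at level $k-1$, I would propagate $\Sigma^k$ regularity by performing joint energy estimates on all operators $\nabla^\alpha J(t)^\ell u$ with $|\alpha|+\ell\le k$. The $\Sigma^k$ bound, combined with the identity $y=J(t)-it\nabla$, will then yield the moment estimate: expanding $y^\ell u$ as a polynomial of degree $\ell$ in $t$ whose coefficients are sums of $\nabla^j J(t)^{\ell-j}u$ (modulo commutators $[y,it\nabla]=-it\,\mathrm{Id}$, which only generate lower-order corrections) produces the bound $\||y|^\ell u(t)\|_{L^2}\le C_\ell\<t\>^\ell$.

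Concretely, applying $\nabla^\alpha J(t)^\ell$ to \eqref{eq:u}, and using that $\nabla$ and $J(t)$ commute with the free Schr\"odinger operator and that $J$ acts as a derivation on gauge invariant nonlinearities, one obtains an equation of the form
\begin{equation*}
\(i\d_t+\tfrac{1}{2}\Delta\)\(\nabla^\alpha J^\ell u\) = \tfrac{1}{2}\<Q(t)y,y\>\nabla^\alpha J^\ell u + \mathcal{N}_{\alpha,\ell}(u) + \mathcal{R}_{\alpha,\ell}(t),
\end{equation*}
where $\mathcal{N}_{\alpha,\ell}$ is the nonlinear contribution and $\mathcal{R}_{\alpha,\ell}$ collects the commutator terms coming from $[\nabla^\alpha J^\ell,\tfrac{1}{2}\<Q(t)y,y\>]$. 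Since $\<Qy,y\>$ is quadratic in $y$, successive commutators with $J$ or $\nabla$ vanish after three steps, so the worst source term is of order $\<t\>^2\|Q(t)\|$ times operators of total order strictly less than $|\alpha|+\ell$. The induction hypothesis then yields
\begin{equation*}
\|\mathcal{R}_{\alpha,\ell}(t)\|_{L^2}\lesssim \<t\>^2\|Q(t)\|\sum_{j+m<|\alpha|+\ell}\|\nabla^j J(t)^m u(t)\|_{L^2},
\end{equation*}
which is integrable in time thanks to \eqref{eq:decayQ} and $\mu>1$. The nonlinear term $\mathcal{N}_{\alpha,\ell}$ is handled in Strichartz norms using the algebra property of $H^k(\R^d)$ for $k\ge 2$ and $d\le 3$, together with the weighted Gagliardo--Nirenberg inequalities attached to $J$, following exactly the scheme of Proposition~\ref{prop:wave-class}. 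A Strichartz/Gronwall bootstrap then closes the estimate.

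Initialization of the bootstrap at $t=-\infty$ is obtained by strengthening Proposition~\ref{prop:Cook-class} to $\Sigma^k$: iterating the Cook-method argument and transferring powers of $y$ through $U(t)$ via the commutation $(y+it\nabla)U(t)=U(t)y$ shows that $\|u(t)-U(t)u_-\|_{\Sigma^k}\to 0$ as $t\to-\infty$. Combined with the uniform $\Sigma^k$ bound obtained above, this yields $u\in L^\infty(\R;\Sigma^k)$; continuity in $\Sigma^k$ then follows from the equation.

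The main obstacle is the bookkeeping of the commutator source terms generated by the non-commutation of $J(t)$ and $\nabla$ with the quadratic potential $\<Q(t)y,y\>$. Each application of $J$ to this multiplication operator introduces a factor of $t$, but because $\<Qy,y\>$ has polynomial degree two in $y$, only the first two commutators contribute, giving a worst-case growth $\<t\>^2\|Q(t)\|$. The decay rate $\|Q(t)\|=\O(\<t\>^{-\mu-2})$ furnished by \eqref{eq:decayQ}, together with $\mu>1$, is precisely what guarantees time-integrability of $\mathcal{R}_{\alpha,\ell}$, closing the bootstrap uniformly in $t$.
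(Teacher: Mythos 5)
Your proposal follows the route the paper explicitly considers and then \emph{rejects}. After writing the equations for $\nabla^2 u$ and $J^2 u$, the author notes that the commutator source contains a factor $t^3 Q(t)$ ``which need not be integrable (unless we make stronger and stronger assumptions of $\mu$, as $k$ increases), so the commutator seems to be fatal to this approach.'' The proof then abandons $J(t)$ for large time and instead builds a modified vector field $\mathcal A_\pm = W_\pm(t)\(\nabla\phi_\pm + i\nabla\)$ from the Riccati equation of Section~\ref{sec:vector-field-Q}, defined on $\{|t|\ge T\}$, which commutes \emph{exactly} with $i\d_t - H_Q$. This is the key device: because there is no commutator at all, there is no growth to absorb. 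The operator $B(t)$ is then defined piecewise ($\mathcal A_-$, $J$, $\mathcal A_+$), the $L^\infty L^2$ bounds for $\nabla^k u$ and $B^k u$ are propagated, and only at the very end is the moment estimate $\||y|^\ell u\|\lesssim \<t\>^\ell$ extracted by writing $y$ in terms of $B$ and $\nabla$ via the asymptotics \eqref{eq:vector-asym} of $K_\pm, W_\pm$.

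Your argument as written has a concrete error and a serious gap. The error: the commutator $[\nabla^\alpha J^\ell, \tfrac12\<Q(t)y,y\>]$ does \emph{not} produce only operators of order strictly less than $|\alpha|+\ell$. After converting the surviving factors of $y$ back to $J - it\nabla$, one obtains terms of the \emph{same} order $|\alpha|+\ell$ with time-dependent coefficients (for $J^2$, terms such as $Q(t)J\nabla u$). So the source is not directly controlled by the induction hypothesis; one needs a genuine Gronwall argument, and for that the coefficient must be integrable in time. The gap: your assertion that the worst coefficient is $\<t\>^2\|Q(t)\|$ is not justified by the ``commutators of a quadratic vanish after three steps'' heuristic alone. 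Naively expanding $y = J - it\nabla$ produces coefficients as large as $t^3 Q(t)$; whether these actually cancel requires a careful bookkeeping that your proof does not carry out, and the paper's own displayed computation for $J^2 u$ exhibits precisely such an uncancelled $t^3 Q$ term. Since $t^3 Q(t)\sim \<t\>^{1-\mu}$ is integrable only for $\mu>2$, whereas the proposition is stated under $\mu>1$, this is not a cosmetic issue: it is exactly why the paper introduces $\mathcal A_\pm$. If you want to pursue the $J$-based approach, you would need to prove rigorously that the $t^3$ (and higher) contributions cancel for arbitrary $\nabla^\alpha J^\ell$, which is a nontrivial claim the present proof does not contain and which contradicts the paper's stated reasoning.
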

\begin{proof}
  We know from the proof of Theorem~\ref{theo:scatt-class} that since
  $u_-\in \Sigma$,
\begin{equation*}
  u,\nabla u, Ju \in L^\infty(\R;L^2(\R^d)).
  \end{equation*}
The natural approach is  then to proceed by induction on $k$,
to prove that 
\begin{align*}
  \nabla^k u,J^k u\in L^\infty(\R;L^2(\R^d)). 
\end{align*}
 We have, as we have seen in the proof of
Proposition~\ref{prop:wave-class},
\begin{align*}
   i\d_t \nabla u &= H_Q \nabla u + Q(t)y u +\nabla
                    \(|u|^2 u\)\\
&+ H_Q \nabla u + Q(t)J(t)u -it Q(t)\nabla u +\nabla
                    \(|u|^2 u\),\\
i\d_t Ju  & = H_Q Ju   +it Q(t)y u +J
                    \(|u|^2 u\)\\
& = H_Q J u + itQ(t)J(t)u +t^2Q(t)\nabla u +J
                    \(|u|^2 u\).
\end{align*}
Applying the operators $\nabla$ and $J$ again, we find
\begin{align*}
   i\d_t \nabla^2 u &= H_Q \nabla^2 u + 2Q(t)y \nabla u +Q(t) u +\nabla^2
                    \(|u|^2 u\)\\
&+ H_Q \nabla u + 2Q(t)J(t)\nabla u -2it Q(t)\nabla^2 u+Q(t)u +\nabla^2
                    \(|u|^2 u\),\\
i\d_t J^2u  & = H_Q J^2u   -2t^2 Q(t)y \nabla u -t^2Q(t)u+J^2
                    \(|u|^2 u\)\\
& = H_Q J^2 u - 2t^2Q(t)J\nabla u +2it^3Q(t) J^2 u +itQ(t)u+J^2
                    \(|u|^2 u\).
\end{align*}
In view of \eqref{eq:decayQ}, we see that $t\mapsto t^3 Q(t)$ need not be
integrable (unless we make stronger and stronger assumptions of $\mu$,
as $k$ increases), so the commutator seems to be fatal to this approach. To
overcome this issue, we use the vector field mentioned in
Section~\ref{sec:vector-field-Q}. 
For bounded time $t\in
[-T,T]$, the above mentioned lack of commutation is not a problem, and
we can use the operator $J$, which is defined for all time. 
We note that either of the operators $\mathcal A_\pm$ or 
$J$ satisfies more generally the pointwise identity
\begin{equation*}
  B\(u_1\overline u_2 u_3\) =\( B u_1\) \overline u_2 u_3 +
  u_1\(\overline{B u_2}\) u_3 + u_1\overline u_2\( Bu_3\),
\end{equation*}
for all differentiable functions $u_1,u_2,u_3$. 

Now we have all the tools to proceed by induction, and mimic the
proof from \cite[Appendix]{Ca11}. The main idea is that the proof is
similar to the propagation of higher regularity for energy-subcritical
problems, with the difference that large time is handled thanks to
vector fields. We leave out the details, which are not difficult but
rather cumbersome: considering
\begin{equation*}
  B(t) =
\left\{
  \begin{aligned}
    \mathcal A_-(t)&\text{ for }t\le -T,\\
J(t)&\text{ for }t\in [-T,T],\\
 \mathcal A_+(t)&\text{ for }t\ge T,
  \end{aligned}
\right.
\end{equation*}
 we can then prove that 
 \begin{equation*}
   \nabla^k u,B^k u\in L^\infty(\R;L^2(\R^d)).  
 \end{equation*}
Back to the definition of $\mathcal A_\pm$, 
\begin{equation*}
  \mathcal A_\pm (t) = W_\pm (t)K_\pm (t)y +iW_\pm (t)\nabla,
\end{equation*}
\eqref{eq:vector-asym}
then yields the result. 
\end{proof}

\subsection{Strichartz estimates}
\label{sec:strichartz-raff}

Introduce the following notations, taking the dependence upon $\eps$
into account:
\begin{equation*}
  H^\eps=-\frac{\eps^2}{2}\Delta+V(x),\quad U_V^\eps(t) =
  e^{-i\frac{t}{\eps} H^\eps}. 
\end{equation*}
Since we now work only in space dimension $d=3$, we can use the result
from \cite{Go06}. Resuming the proof from \cite{Go06} (a mere scaling
argument is not sufficient), we have, along with the preliminary
analysis from Section~\ref{sec:spectral}, the global dispersive estimate
\begin{equation}
  \label{eq:disp-semi-glob}
  \|U^\eps_V(t)\|_{L^1(\R^3)\to L^\infty(\R^3)}\lesssim
  \frac{1}{(\eps|t|)^{3/2}},\quad t\not =0.
\end{equation}
For $|t|\le \delta$, $\delta>0$ independent of $\eps$, the above
relation stems initially from \cite{Fujiwara}. As a consequence, we
can measure the dependence upon $\eps$ in Strichartz estimates. We
recall the definition of admissible pairs related to Sobolev
regularity.
\begin{definition}
  Let $d=3$ and $s\in \R$. A pair $(q,r)$ is called $\dot H^s$-admissible if 
  \begin{equation*}
    \frac{2}{q}+\frac{3}{r} = \frac{3}{2}-s. 
  \end{equation*}
\end{definition}
For $t_0\in \R\cup \{-\infty\}$, we denote by
\begin{equation*}
  R^\eps_{t_0}(F)(t) = \int_{t_0}^t U_V^\eps(t-s)F(s)ds
\end{equation*}
the retarded term related to Duhamel's formula. Since the dispersive
estimate \eqref{eq:disp-semi-glob} is the same as the one for
$e^{i\eps t\Delta}$, we get the same scaled Strichartz estimates as
for this operator, which can in turn be obtained by scaling
arguments from the case $\eps=1$. 
\begin{lemma}[Scaled $L^2$-Strichartz estimates]\label{lem:stri-eps}
  Let $t_0\in \R\cup\{-\infty\}$, and let $(q_1,r_1)$ and $(q_2,r_2)$
  be $L^2$-admissible pairs, $2\le  r_j\le 6$. We have
  \begin{equation*}
   \eps^{\frac{1}{q_1}} \|U_V^\eps(\cdot) f\|_{L^{q_1}(\R;L^{r_1}(\R^3))}\lesssim
    \|f\|_{L^2(\R^3)}, 
  \end{equation*}
  \begin{equation*}
  \eps^{\frac{1}{q_1}+\frac{1}{q_2}}
  \|R^\eps_{t_0}(F)\|_{L^{q_1}(I;L^{r_1}(\R^3))}\le C_{q_1,q_2} \|F\|_{L^{q_2'}(I;L^{r_2'}(\R^3))},
  \end{equation*}
where $C_{q_1,q_2}$ is independent of $\eps$, $t_0$, and of $I$ such that
$t_0\in \bar I$. 
\end{lemma}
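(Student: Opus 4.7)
\medskip

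\noindent\textbf{Proof plan.} The two inputs available are the $L^2$-unitarity of $U_V^\eps(t)$ (since $H^\eps$ is self-adjoint on $L^2(\R^3)$) and the global dispersive estimate \eqref{eq:disp-semi-glob}, whose right-hand side $(\eps|t|)^{-3/2}$ is \emph{exactly} the $L^1\to L^\infty$ decay of the $\eps$-rescaled free group $e^{i\eps t\Delta/2}$. The plan is to observe that these are precisely the Keel--Tao hypotheses, so the abstract Strichartz machinery applies; then to read off the $\eps$-dependence by either scaling or a direct application of Hardy--Littlewood--Sobolev in time.

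\medskip

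\noindent For the homogeneous estimate, the cleanest route is a rescaling argument. For fixed $f$, the function $\tau\mapsto U_V^\eps(\tau/\eps)f$ plays the role of an $\eps=1$ evolution: it is $L^2$-unitary and satisfies the dispersive bound $\|U_V^\eps(\tau/\eps)\|_{L^1\to L^\infty}\lesssim |\tau|^{-3/2}$. Keel--Tao (including the endpoint $q=2$, $r=6$, since $d=3$) therefore gives $\|U_V^\eps((\cdot)/\eps)f\|_{L^q_\tau L^{r}}\lesssim\|f\|_{L^2}$ for every $L^2$-admissible pair with $2\le r\le 6$. The change of variable $\tau=\eps t$ costs precisely a factor $\eps^{-1/q}$ in the $L^q_t$ norm, which produces the prefactor $\eps^{1/q_1}$ in the claimed inequality. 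Alternatively, one can avoid the change of variable and interpolate \eqref{eq:disp-semi-glob} with $L^2$-unitarity to obtain $\|U_V^\eps(t)\|_{L^{r'}\to L^r}\lesssim(\eps|t|)^{-2/q}$, insert it in the $TT^*$ kernel, and close with HLS in time (whose Riesz-potential prefactor is $\eps^{-2/q}$); the endpoint is again covered by the bilinear Keel--Tao argument.

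\medskip

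\noindent For the inhomogeneous estimate, write
\begin{equation*}
 R^\eps_{t_0}(F)(t)=U_V^\eps(t)\int_{t_0\cap\{s\le t\}}U_V^\eps(-s)F(s)\,ds .
\end{equation*}
Dropping the cutoff $\{s\le t\}$ yields a bilinear expression which, by duality of the already-proven homogeneous bound applied separately in the $(q_1,r_1)$ and $(q_2,r_2)$ variables, is bounded by $\eps^{-1/q_1-1/q_2}\|F\|_{L^{q_2'}L^{r_2'}}$. Reinstating the retarded cutoff $\{s\le t\}$ is the role of the Christ--Kiselev lemma, which requires $q_1>q_2'$, i.e.\ $1/q_1+1/q_2<1$; the Christ--Kiselev constant depends only on $(q_1,q_2)$, independently of $\eps$, $t_0$ and $I$. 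The remaining double-endpoint $q_1=q_2=2$ is supplied directly by the Keel--Tao bilinear argument, once again applied to an operator satisfying identical hypotheses to those of the free case.

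\medskip

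\noindent\textbf{Expected main obstacle.} No genuinely new analytic difficulty arises: everything reduces to tracking $\eps$-powers through standard Strichartz machinery, once the global dispersive bound \eqref{eq:disp-semi-glob} is in hand (itself the delicate input, supplied by \cite{Go06} globally in time and \cite{Fujiwara} for short times, together with the spectral hypotheses --- absence of eigenvalues and zero-resonance --- established in Section~\ref{sec:spectral}). The only place demanding a little care is the uniformity of the constant $C_{q_1,q_2}$ with respect to $t_0\in\R\cup\{-\infty\}$ and to the interval $I$: this is automatic from the way Christ--Kiselev and Keel--Tao produce time-translation-invariant estimates, but should be explicitly pointed out.
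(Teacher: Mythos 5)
Your proposal is correct and follows essentially the same route as the paper, which proves the lemma by observing that \eqref{eq:disp-semi-glob} matches the dispersive decay of the rescaled free group and then invoking the standard Keel--Tao/Christ--Kiselev machinery with the $\eps$-powers read off by scaling from the case $\eps=1$. Your write-up merely makes explicit what the paper leaves implicit (in particular the purely temporal rescaling $\tau=\eps t$, which is the right substitute for the spatial scaling that $V$ forbids, and the treatment of the double endpoint), and does so correctly.
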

We will also use Strichartz estimates for non-admissible pairs, as
established in \cite{Kat94} (see also \cite{CW92,FoschiStri}).
\begin{lemma}[Scaled inhomogeneous Strichartz
  estimates]\label{lem:stri-inhom-eps} 
   Let $t_0\in \R\cup\{-\infty\}$, and let $(q_1,r_1)$ be an $\dot
   H^{1/2}$-admissible pair, and $(q_2,r_2)$
  be an $\dot H^{-1/2}$-admissible pair, with 
  \begin{equation*}
    3\le r_1,r_2<6.
  \end{equation*}
We have
\begin{equation*}
  \eps^{\frac{1}{q_1}+\frac{1}{q_2}}
  \|R^\eps_{t_0}(F)\|_{L^{q_1}(I;L^{r_1}(\R^3))}\le C_{q_1,q_2} \|F\|_{L^{q_2'}(I;L^{r_2'}(\R^3))},
  \end{equation*}
where $C_{q_1,q_2}$ is independent of $\eps$, $t_0$, and of $I$ such that
$t_0\in \bar I$. 
\end{lemma}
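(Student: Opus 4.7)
The plan is to reduce to the $\eps=1$ case by a rescaling of time, and then to invoke the abstract inhomogeneous Strichartz theorem of Kato/Cazenave--Weissler/Foschi--Vilela (the cited \cite{Kat94,CW92,FoschiStri}), whose proof uses only two ingredients: unitarity on $L^2$ and a pointwise-in-time dispersive bound $|t|^{-d/2}$ from $L^1$ to $L^\infty$. The key observation is that \eqref{eq:disp-semi-glob} is already of the scale-invariant form $(\eps|t|)^{-3/2}$, so substituting $\tau=\eps t$ turns it into the standard bound $|\tau|^{-3/2}$ on which the classical theorem is based.

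Concretely, I first combine \eqref{eq:disp-semi-glob} with the $L^2$-isometry $\|U_V^\eps(t)\|_{L^2\to L^2}=1$ (self-adjointness of $H^\eps$) by Riesz--Thorin interpolation to obtain
\begin{equation*}
  \|U_V^\eps(t)f\|_{L^p(\R^3)}\lesssim (\eps|t|)^{-3(1/2-1/p)}\|f\|_{L^{p'}(\R^3)},\quad 2\le p\le\infty,\ t\neq 0.
\end{equation*}
I then set $V^\eps(\tau):=U_V^\eps(\tau/\eps)=e^{-i(\tau/\eps^2)H^\eps}$. This is a unitary group on $L^2(\R^3)$ with $\eps$-\emph{independent} dispersive bounds $\|V^\eps(\tau)f\|_{L^p}\lesssim|\tau|^{-3(1/2-1/p)}\|f\|_{L^{p'}}$ for all $2\le p\le\infty$. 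Applied to $V^\eps$, the cited inhomogeneous theorem yields, for any $\dot H^{1/2}$-admissible pair $(q_1,r_1)$ and $\dot H^{-1/2}$-admissible pair $(q_2,r_2)$ with $3\le r_1,r_2<6$, and any interval $\tilde I$ containing $\tau_0$,
\begin{equation*}
  \Big\|\int_{\tau_0}^{\tau}V^\eps(\tau-\sigma)G(\sigma)d\sigma\Big\|_{L^{q_1}(\tilde I;L^{r_1})}\le C_{q_1,q_2}\|G\|_{L^{q_2'}(\tilde I;L^{r_2'})},
\end{equation*}
with $C_{q_1,q_2}$ depending only on $q_1,q_2$.

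To conclude, I undo the rescaling. With $\tau=\eps t$, $\sigma=\eps s$, $\tau_0=\eps t_0$, $\tilde I=\eps I$ and $\tilde F(\sigma,x):=F(\sigma/\eps,x)$, a direct change of variables gives the pointwise identity $\int_{\tau_0}^\tau V^\eps(\tau-\sigma)\tilde F(\sigma)d\sigma=\eps\, R^\eps_{t_0}(F)(\tau/\eps)$, while $dt=d\tau/\eps$ and $ds=d\sigma/\eps$ in the time-$L^{q_j}$ norms produce
\begin{equation*}
  \big\|\eps R^\eps_{t_0}(F)(\cdot/\eps)\big\|_{L^{q_1}(\tilde I;L^{r_1})}=\eps^{1+1/q_1}\|R^\eps_{t_0}(F)\|_{L^{q_1}(I;L^{r_1})},\qquad \|\tilde F\|_{L^{q_2'}(\tilde I;L^{r_2'})}=\eps^{1/q_2'}\|F\|_{L^{q_2'}(I;L^{r_2'})}.
\end{equation*}
Since $1/q_2'-1=-1/q_2$, the previous inequality rearranges to exactly $\eps^{1/q_1+1/q_2}\|R^\eps_{t_0}(F)\|_{L^{q_1}(I;L^{r_1})}\le C_{q_1,q_2}\|F\|_{L^{q_2'}(I;L^{r_2'})}$, and the uniformity in $t_0$ and $I$ follows from the corresponding uniformity in the abstract theorem. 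I do not foresee any substantive obstacle: the only point to check is that the range $3\le r_j<6$ is precisely the Foschi--Vilela window (for $d=3$ and $s_1+s_2=0$, $s_1=1/2$) in which the bilinear Hardy--Littlewood--Sobolev argument underlying the abstract estimate converges -- the upper bound $r_j<6$ excludes the Keel--Tao endpoint of $\dot H^{1/2}$-admissibility, while $r_j\ge 3$ together with the admissibility relation keeps $q_1>4$ and $q_2\in(4/3,2]$ in the HLS-subcritical regime.
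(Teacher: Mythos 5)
Your proof is correct and follows essentially the route the paper intends: the paper derives the lemma by noting that the dispersive estimate \eqref{eq:disp-semi-glob} coincides with that of $e^{i\eps t\Delta/2}$ and then invoking the non-admissible inhomogeneous estimates of \cite{Kat94,CW92,FoschiStri} via a scaling argument from the case $\eps=1$, which is exactly your rescaling $\tau=\eps t$. Your bookkeeping of the powers of $\eps$ and the identification of the exponent range ($q_1>4$, $q_2\in(4/3,2]$) with the window of the abstract theorem are accurate.
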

\subsection{Preparing the proof}
\label{sec:preparing-proof}

 Subtracting the equations satisfied by
$\psi^\eps$ and $\varphi^\eps$, respectively, we obtain as in
\cite{CaFe11}: $w^\eps=\psi^\eps-\varphi^\eps$ satisfies
\begin{equation}\label{eq:restecrit}
  i\eps\d_t w^\eps +\frac{\eps^2}{2}\Delta w^\eps =V w^\eps -\mathcal L^\eps
  + 
  \eps^{5/2}\(|\psi^\eps|^{2}\psi^\eps
  -|\varphi^\eps|^{2}\varphi^\eps\),
\end{equation}
along with the initial condition
\begin{equation*}
    e^{-i\frac{\eps
      t}{2}\Delta}w^\eps_{\mid t=-\infty}=0,  
\end{equation*}
where the source term is given by 
\begin{equation*}
  {\mathcal L}^\eps(t,x) = \(V(x) - V\(q(t)\)
  -\sqrt\eps \<\nabla V\(q(t)\),y\>
  -\frac{\eps}{2}\<Q(t)y,y\>\)\Big|_{y=\frac{x-q(t)}{\sqrt\eps}}
  \varphi^\eps(t,x). 
\end{equation*}
Duhamel's
formula for $w^\eps$ reads
\begin{align*}
  w^\eps(t) &= -i\eps^{3/2}\int_{-\infty}^t U^\eps_V(t-s)\(|\psi^\eps|^{2}\psi^\eps
  -|\varphi^\eps|^{2}\varphi^\eps\)(s)ds\\
&\quad  +i\eps^{-1}\int_{-\infty}^t U^\eps_V(t-s) \mathcal L^\eps(s)ds. 
\end{align*}
Denoting $L^a(]-\infty,t];L^b(\R^3))$ by $L^a_tL^b$, Strichartz
estimates yield, for any $L^2$-admissible pair $(q_1,r_1)$,
\begin{equation}\label{eq:stri-weps}
  \eps^{1/q_1}\|w^\eps\|_{L^{q_1}_t L^{r_1}} \lesssim
  \eps^{3/2-1/q}\left\||\psi^\eps|^{2}\psi^\eps 
  -|\varphi^\eps|^{2}\varphi^\eps\right\|_{L^{q'}_tL^{r'}} +
  \frac{1}{\eps}\|\mathcal L^\eps\|_{L^1_tL^2},
\end{equation}
where $(q,r)$ is the admissible pair chosen in the proof of
Proposition~\ref{prop:waveop-quant}, that is $r=2\si+2$. Since we now
have  $d=3$ and $\si=1$, this means:
\begin{equation*}
  q=\frac{8}{3},\quad k=8,
\end{equation*}
and \eqref{eq:stri-weps} yields
\begin{equation}\label{eq:w-presque}
  \eps^{1/q_1}\|w^\eps\|_{L^{q_1}_t L^{r_1}} \lesssim
  \eps^{9/8}\( \|w^\eps\|^2_{L^8_t L^4}+ \|\varphi^\eps\|^2_{L^8_t
    L^4}\)\|w^\eps\|_{L^{8/3}_tL^4}  +
  \frac{1}{\eps}\|\mathcal L^\eps\|_{L^1_tL^2}.
\end{equation}
The strategy is then to first
obtain an a priori estimate for $w^\eps$ in $L^8_tL^4$, and then to
use it in the above estimate. In order to do so, we begin by
estimating the source term $\mathcal L^\eps$, in the next subsection. 
\subsection{Estimating the source term}
\label{sec:estim-source-term}

\begin{proposition}\label{prop:est-source}
  Let $d= 3$, $\si=1$, $V$ satisfying Assumption~\ref{hyp:V}
  with $\mu>2$, and $u_-\in \Sigma^k$ for some $k\ge
  7$. Suppose that Assumption~\ref{hyp:flot} is satisfied.
Let $u\in C(\R;\Sigma^k)$ given by
  Theorem~\ref{theo:scatt-class} and 
  Proposition~\ref{prop:extra-u}. The source term $\mathcal L^\eps$ satisfies
  \begin{equation*}
   \frac{1}{\eps} \|\mathcal L^\eps(t)\|_{L^2(\R^3)}\lesssim \frac{\sqrt
     \eps}{\<t\>^{3/2} }\quad\text{and}\quad  \frac{1}{\eps}
   \|\mathcal L^\eps(t)\|_{L^{3/2}(\R^3)}\lesssim \frac{
     \eps^{3/4}}{\<t\>^{3/2} },
\quad \forall
    t\in \R.
  \end{equation*}
\end{proposition}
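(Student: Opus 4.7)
The plan is to exploit the factorization $\mathcal L^\eps(t,x)=r(t,y)\,\varphi^\eps(t,x)$ with $y=(x-q(t))/\sqrt\eps$ and
\[
r(t,y)=V(q(t)+\sqrt\eps y)-V(q(t))-\sqrt\eps\<\nabla V(q(t)),y\>-\tfrac{\eps}{2}\<Q(t)y,y\>,
\]
which is precisely the third-order Taylor remainder of $V$ at $q(t)$; its integral form reads $r(t,y)=\tfrac{\eps^{3/2}}{2}\int_0^1 (1-\theta)^2\,\nabla^3V(q(t)+\theta\sqrt\eps y)[y,y,y]\,d\theta$. Under the change of variable $x=q(t)+\sqrt\eps y$, the norm $\|\mathcal L^\eps\|_{L^2(\R^3)}$ reduces to $\|r(t,\cdot)u(t,\cdot)\|_{L^2(\R^3)}$ (with an extra factor $\eps^{1/4}$ appearing for $L^{3/2}$ through the Jacobian), so the whole task boils down to estimating $\|r\,u\|_{L^p}$ with the correct decay in $\langle t\rangle$ and gain in $\eps$.

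I would split $\R^3_y$ into the inner region $A=\{\sqrt\eps|y|\le |q(t)|/2\}$ and the outer region $B=\{\sqrt\eps|y|>|q(t)|/2\}$. On $A$, every intermediate point $q(t)+\theta\sqrt\eps y$ for $\theta\in[0,1]$ satisfies $|q+\theta\sqrt\eps y|\ge |q(t)|/2$, so Assumption~\ref{hyp:V} yields $|\nabla^3 V(q+\theta\sqrt\eps y)|\lesssim (1+|q(t)|)^{-\mu-3}$, whence $|r(t,y)|\lesssim \eps^{3/2}|y|^3(1+|q(t)|)^{-\mu-3}$. Using the moment bound $\|\,|y|^3u(t)\|_{L^2}\lesssim\langle t\rangle^3$ from Proposition~\ref{prop:extra-u}, together with the uniform comparison $\langle t\rangle\lesssim 1+|q(t)|$ coming from Assumption~\ref{hyp:flot} on classical scattering, one obtains $\|r\,u\|_{L^2(A)}\lesssim \eps^{3/2}/\langle t\rangle^{\mu}$, which is $\lesssim\eps^{3/2}/\langle t\rangle^{3/2}$ since $\mu>3/2$.

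On $B$ I would abandon Taylor and bound $|r|$ by the triangle inequality, using the pointwise decay of $V$, $\nabla V$ and $Q$ (together with \eqref{eq:decayQ}):
\[
|r(t,y)|\lesssim |V(q+\sqrt\eps y)|+\langle t\rangle^{-\mu}+\sqrt\eps|y|\langle t\rangle^{-\mu-1}+\eps|y|^2\langle t\rangle^{-\mu-2}.
\]
Since $|y|>R:=|q(t)|/(2\sqrt\eps)\gtrsim\langle t\rangle/\sqrt\eps$ on $B$, the tail estimate $\int_B|y|^{2j}|u|^2\le R^{-(2k-2j)}\|\,|y|^k u\|_{L^2}^2\lesssim \eps^{k-j}\langle t\rangle^{2j}$ with $k=7$ makes the last three terms contribute $\lesssim\eps^k\langle t\rangle^{-2\mu}$, comfortably below $\eps^3/\langle t\rangle^3$. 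The delicate term is $V(q+\sqrt\eps y)$, which has no explicit $\langle t\rangle$-decay; I would handle it by a secondary split of $B$ according to whether $|q+\sqrt\eps y|$ is above or below a fixed threshold, combining the $L^\infty$ bound on $V$ in the "bad" subset (which is located at $|y|\sim\langle t\rangle/\sqrt\eps$, deep in the tail of $u$) with the high-order moment bound granted by $u_-\in\Sigma^7$.

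The $L^{3/2}$ estimate is obtained by the same regional decomposition, with Hölder's inequality and the Sobolev embedding $H^1(\R^3)\hookrightarrow L^6(\R^3)$ used to trade integrability for derivatives; the extra $\eps^{1/4}$ gain recorded in the claim comes from the Jacobian $dx=\eps^{3/2}dy$ evaluated at the level of $L^{3/2}$. The main technical obstacle, as indicated, lies in controlling the $V(q+\sqrt\eps y)$-contribution on $B$ uniformly in $t$: this is precisely where the full strength of $u_-\in\Sigma^7$ is needed, since lower moments of $u$ alone cannot suppress the tail contribution enough to produce the required $\langle t\rangle^{-3/2}$ decay for very large $|t|$.
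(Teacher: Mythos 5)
Your overall strategy --- factor out the envelope, Taylor-expand the potential, split $\R^3_y$ into an inner region where $|q(t)+\theta\sqrt\eps y|\gtrsim |q(t)|$ and an outer tail region --- mirrors the paper's decomposition ($\Omega$ and $\Omega^c$ with $\Omega=\{|y|\ge\eta t/\sqrt\eps\}$, $\eta<|p^+|/2$). The inner-region estimate and the treatment of the ``small'' terms $V(q(t))u$, $\sqrt\eps\<\nabla V(q(t)),y\>u$, $\frac{\eps}{2}\<Q(t)y,y\>u$ on the outer region are essentially correct.

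However, there is a genuine gap precisely at the point you yourself flag as ``the main technical obstacle'': the term $V(q(t)+\sqrt\eps y)u(t,y)$ on the outer region. You propose to combine an $L^\infty$ bound on $V$ with high-order moments of $u$. This cannot produce the required $\<t\>^{-3/2}$ decay, no matter how many moments you take. Indeed, the moment bound of Proposition~\ref{prop:extra-u} reads $\||y|^\ell u(t)\|_{L^2}\lesssim\<t\>^\ell$, while the cut-off radius is $R\sim\<t\>/\sqrt\eps$, so
\begin{equation*}
 \|u(t)\|_{L^2(B)}\le R^{-k}\||y|^k u(t)\|_{L^2}\lesssim (\sqrt\eps/\<t\>)^k\<t\>^k=\eps^{k/2},
\end{equation*}
with the $\<t\>$-factors cancelling exactly, for every $k$. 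Since $V$ itself carries no $t$-decay on the bad set (its argument is bounded there), the only $t$-decay available is the \emph{dispersive} decay $\|u(t)\|_{L^\infty}\lesssim\<t\>^{-3/2}$. The paper obtains it via the weighted Gagliardo--Nirenberg inequality with the vector field $B$ (built from $J$ and the adapted operators $\mathcal A_\pm$),
\begin{equation*}
 \|f\|_{L^\infty}\lesssim\frac{1}{t^{3/2}}\|f\|_{L^2}^{1/4}\|B^2(t)f\|_{L^2}^{3/4},
\end{equation*}
applied to $f=\chi(\sqrt\eps y/t)\,u$; the cut-off and moment bounds then supply the $\eps$-gain in $\|f\|_{L^2}$ and $\|B^2 f\|_{L^2}$, and the $L^2$-norm of $V(q(t)+\sqrt\eps y)$ over $\Omega$ is controlled by $\eps^{-3/4}\|V\|_{L^2}$ after changing variables. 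In short, the $\eps$-gain comes from moments, but the $\<t\>^{-3/2}$ decay comes from the vector-field dispersive estimate --- an ingredient your proposal does not invoke, and without which the outer-region bound for the leading term is merely $\eps^{k/2-1}$ with no temporal decay, hence not integrable in time as required by the proof of Theorem~\ref{theo:cv-unif}.
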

\begin{proof}
To ease notation, we note that
\begin{equation*}
 \frac{1}{\eps} \mathcal L^\eps(t,x) = \frac{1}{\eps^{3/4}} {\mathcal
    S}^\eps(t,y)\Big|_{y=\frac{x-q(t)}{\sqrt\eps}}
  e^{i(S(t)+ip(t)\cdot (x-q(t)))/\eps}, 
\end{equation*}
where
\begin{equation*}
  {\mathcal S}^\eps(t,y) = \frac{1}{\eps}\( V\(q(t)+y\sqrt \eps\) -V\(q(t)\)
  -\sqrt\eps \<\nabla V\(q(t)\),y\> -\frac{\eps}{2}\<Q(t)y,y\>\)u(t,y).
\end{equation*}
In particular,
\begin{equation*}
  \frac{1}{\eps}\| \mathcal L^\eps(t)\|_{L^2(\R^3)} = \|{\mathcal
    S}^\eps(t)\|_{L^2(\R^3)} ,\quad  \frac{1}{\eps}\| \mathcal
  L^\eps(t)\|_{L^{3/2}(\R^3)} = \eps^{1/4}\|{\mathcal 
    S}^\eps(t)\|_{L^{3/2}(\R^3)}. 
\end{equation*}
  Taylor's formula and Assumption~\ref{hyp:V} yield the pointwise estimate
  \begin{equation*}
    | {\mathcal S}^\eps(t,y) | \lesssim \sqrt\eps |y|^3 \int_0^1
    \frac{1}{\<q(t)+\theta y\sqrt \eps\>^{\mu+3}}d\theta |u(t,y)|. 
  \end{equation*}
To simplify notations, we consider only positive times. Recall that
from Assumption~\ref{hyp:flot}, $p^+\not =0$. Introduce, for
$0<\eta< |p^+|/2$,
\begin{equation*}
  \Omega  = \left\{y\in \R^3,\quad |y|\ge \eta\frac{t}{\sqrt\eps}\right\}.
\end{equation*}
Since $q(t)\sim p^+ t$ as
$t\to \infty$, on the complement of $\Omega$, we can use the decay of $V$,
\eqref{eq:short}, to infer the pointwise estimate
\begin{equation}\label{eq:Spoint}
   | {\mathcal S}^\eps(t,y) | \lesssim \sqrt\eps |y|^3
    \frac{1}{\<t\>^{\mu+3}} |u(t,y)| \quad \text{on }\Omega^c.
\end{equation}
Taking the $L^2$-norm, we have
\begin{equation*}
  \|\mathcal S^\eps(t)\|_{L^2(\Omega^c)}\le \frac{\sqrt \eps
  }{\<t\>^{\mu+3}}\||y|^3u(t,y)\|_{L^2(\R^3)}\lesssim \frac{\sqrt \eps
  }{\<t\>^{\mu}},
\end{equation*}
where we have used Proposition~\ref{prop:extra-u}. On $\Omega$
however, the argument of the potential in Taylor's formula is not
necessarily going to infinity, so the decay of the potential is
apparently useless. Back to the definition of $\mathcal L^\eps$, that is leaving
out Taylor's formula, we see that all the terms but the first one can
be easily estimated on $\Omega$. Indeed, the definition of $\Omega$ implies
\begin{equation*}
  |V(q(t))u(t,y)| \lesssim \frac{1}{\<t\>^\mu}|u(t,y)|\lesssim
  \frac{1}{\<t\>^\mu} \left| \frac{y\sqrt \eps}{t}\right|^k |u(t,y)|,
\end{equation*}
where $k$ will be chosen shortly. Taking the $L^2$ norm, we find
\begin{equation*}
  \frac{1}{\eps}\|V(q(t))u(t)\|_{L^2(\Omega)} \lesssim
  \frac{\eps^{k/2-1}}{\<t\>^{\mu+k}}\||y|^k u(t,y)\|_{L^2(\R^3)}
  \lesssim \frac{\eps^{k/2-1}}{\<t\>^{\mu}},
\end{equation*}
where we have used Proposition~\ref{prop:extra-u} again. Choosing
$k=3$ yields the expected estimate. The last two terms in $\mathcal
L^\eps$ can be estimated accordingly. For the first term in $\mathcal
L^\eps$ however, we face the same problem as above: the argument of
$V$ has to be considered as bounded. A heuristic argument goes as
follows. In view of Theorem~\ref{theo:scatt-class},
\begin{equation*}
  u(t,y) \Eq t \infty e^{i\frac{t}{2}\Delta}u_+ \Eq t \infty
  \frac{1}{t^{3/2}}\widehat u_+\(\frac{y}{t}\)e^{i|y|^2/(2t)},
\end{equation*}
where the last behavior stems from standard analysis of the
Schr\"odinger group (see e.g. \cite{Rauch91}). In view of
the definition of $\Omega$, we have, formally for $y\in \Omega$,  
\begin{equation*}
  |u(t,y)|\lesssim
  \frac{1}{t^{3/2}}\sup_{ |z|\ge \eta}\left |\widehat
    u_+\(\frac{z}{\sqrt\eps}\)\right|. 
\end{equation*}
Then the idea is to keep the linear dispersion measured by the factor
$t^{-3/2}$ (which is integrable since $d=3$), and use decay properties
for $\widehat u_+$ to gain powers of $\eps$. To make this argument
rigorous, we keep the idea 
that $u$ must be assessed in $L^\infty$ rather than in $L^2$, and write
\begin{equation*}
  \frac{1}{\eps}\|V\(q(t)+y\sqrt \eps\)u(t,y)\|_{L^2(\Omega)} \le
  \frac{1}{\eps}\|u(t)\|_{L^\infty(\Omega)} \|V\(q(t)+y\sqrt \eps\)\|_{L^2(\Omega)} .
\end{equation*}
For the last factor, we have
\begin{equation*}
  \|V\(q(t)+y\sqrt
  \eps\)\|_{L^2(\Omega)}\le \eps^{-3/4}\|V\|_{L^2(\R^3)},
\end{equation*}
where the last norm is finite since $\mu>2$. For the $L^\infty$ norm of
$u$, we use Gagliardo-Nirenberg inequality and the previous
vector-fields. To take advantage of the localization in space,
introduce a non-negative cut-off function $\chi\in C^\infty(\R^3)$, such that:
\begin{equation*}
  \chi(z)=\left\{
    \begin{aligned}
      1& \text{ if }|z|\ge \eta,\\
0 & \text{ if }|z|\le\frac{\eta}{2}.
    \end{aligned}
\right.
\end{equation*}
In view of the definition of $\Omega$, 
\begin{equation*}
   \|u(t)\|_{L^\infty(\Omega)}  \le \left\|
     \chi\(\frac{y\sqrt\eps}{t}\)u(t,y)\right\|_{L^\infty(\R^3)}. 
\end{equation*}
Now with $B$ as defined in the proof of
Proposition~\ref{prop:extra-u}, Gagliardo-Nirenberg inequality yields,
for any smooth function $f$
 (recall that $y\in \R^3$),
 \begin{equation*}
   \|f\|_{L^\infty(\R^3)}\lesssim
   \frac{1}{t^{3/2}}\|f\|_{L^2(\R^3)}^{1/4}\|B^2(t)f\|_{L^2(\R^3)}^{3/4}.
 \end{equation*}
We use this inequality with
\begin{equation*}
  f(t,y) = \chi\(\frac{y\sqrt\eps}{t}\)u(t,y),
\end{equation*}
and note that
\begin{equation*}
  B(t)f (t,y)= \chi\(\frac{y\sqrt\eps}{t}\)B(t)u(t,y) + i\frac{\sqrt
    \eps}{t}W(t) \nabla \chi \(\frac{y\sqrt\eps}{t}\) \times u(t,y),
\end{equation*}
where $W(t)$ stands for $W_\pm$ or $t$. Recall that $t\mapsto W(t)/t$
is bounded, so the last term is actually ``nice''. 
Proceeding in the same way as above, we obtain
\begin{equation*}
  \|u(t)\|_{L^2(\Omega)}\lesssim
  \left\| \left| \frac{y\sqrt \eps}{t}\right|^k
    u(t,y)\right\|_{L^2(\Omega)}\lesssim \eps^{k/2},
\end{equation*}
provided that $u_-\in \Sigma^k$. Similarly,
\begin{equation*}
  \|B^2(t)u\|_{L^2(\Omega)} \lesssim \eps^{k/2-1},
\end{equation*}
and so 
\begin{equation*}
   \frac{1}{\eps}\|V\(q(t)+y\sqrt \eps\)u(t,y)\|_{L^2(\Omega)}\lesssim 
   \frac{1}{t^{3/2}}\eps^{-7/4 +k/8+3(k/2-1)/4}= \frac{\eps^{k/2-5/2}}{t^{3/2}}.
\end{equation*}
Therefore, the $L^2$ estimate follows as soon as $k \ge 6$. For the
$L^{3/2}$-estimate, we resume the same computations, and use the extra
estimate: for all $s>1/2$, 
\begin{equation}\label{eq:localizing}
  \|f\|_{L^{3/2}(\R^3)}\lesssim \|f\|_{L^2(\R^3)}^{1-1/2s}\||x|^sf\|_{L^2(\R^3)}^{1/2s}.
\end{equation}
This estimate can easily be proven by writing
\begin{equation*}
   \|f\|_{L^{3/2}(\R^3)} \le \|f\|_{L^{3/2}(|y|<R)} + \left\|
     \frac{1}{|x|^s} |x|^s f\right\|_{L^{3/2}(|x|>R)},
\end{equation*}
so H\"older inequality yields, provided that $s>1/2$ (so that
$y\mapsto |y|^{-s}\in L^6(|y|>R)$)
\begin{equation*}
   \|f\|_{L^{3/2}(\R^3)} \le \sqrt R \|f\|_{L^2} + \frac{1}{R^{s-1/2}}\||x|^s f\|_{L^2},
\end{equation*}
and by optimizing in $R$. Now from \eqref{eq:Spoint}, we have
\begin{align*}
  \|\mathcal S^\eps(t)\|_{L^{3/2}(\Omega^c)}&\le \frac{\sqrt \eps
  }{\<t\>^{\mu+3}}\||y|^3u(t,y)\|_{L^{3/2}(\R^d)}\\
&\lesssim \frac{\sqrt \eps
  }{\<t\>^{\mu+3}}\||y|^3u(t,y)\|_{L^2(\R^d)}^{1/2}\||y|^4u(t,y)\|_{L^2(\R^d)}^{1/2}\\
&
\lesssim \frac{\sqrt \eps
  }{\<t\>^{\mu-1/2}}\lesssim \frac{\sqrt \eps
  }{\<t\>^{3/2}}
\end{align*}
where we have used \eqref{eq:localizing} with $s=1$, 
Proposition~\ref{prop:extra-u}, and the fact that $\mu>2$. 

On $\Omega$, we can repeat the computations from the $L^2$-estimate
(up to incorporating \eqref{eq:localizing}): for the last term, we
note that
\begin{equation*}
  \frac{1}{\eps}\|V\(q(t)+y\sqrt \eps\)u(t,y)\|_{L^{3/2}(\Omega)} \le
  \frac{1}{\eps}\|u(t)\|_{L^\infty(\Omega)} \|V\(q(t)+y\sqrt \eps\)\|_{L^{3/2}(\Omega)} ,
\end{equation*}
and that
\begin{equation*}
  \|V\(q(t)+y\sqrt
  \eps\)\|_{L^{3/2}(\Omega)}\le \eps^{-1}\|V\|_{L^{3/2}(\R^3)},
\end{equation*}
where the last norm is finite since $\mu>2$. Up to taking $u$ in
$\Sigma^7$, we conclude
\begin{equation*}
   \|\mathcal S^\eps(t)\|_{L^{3/2}(\R^3)}\lesssim \frac{\sqrt\eps}{\<t\>^{3/2}},
\end{equation*}
and the proposition follows. 
\end{proof}

\subsection{A priori estimate for the error in the critical norm}
\label{sec:priori-estim-error}

In this subsection, we prove:
\begin{proposition}\label{prop:w-crit}
  Under the assumptions of Theorem~\ref{theo:cv-unif}, the error
  $w^\eps=\psi^\eps-\varphi^\eps$  satisfies the a priori estimate,
  for any $\dot H^{1/2}$-admissible pair 
  $(q,r)$, 
  \begin{equation*}
    \eps^{\frac{1}{q}}\|w^\eps\|_{L^q(\R;L^r(\R^3))}\lesssim \eps^{1/4}. 
  \end{equation*}
\end{proposition}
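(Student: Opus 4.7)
\smallskip

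The starting point is Duhamel's formula for $w^\eps$ from $t=-\infty$,
\begin{equation*}
  w^\eps(t) = -i\eps^{3/2}R^\eps_{-\infty}\bigl(|\psi^\eps|^2\psi^\eps - |\varphi^\eps|^2\varphi^\eps\bigr)(t) + i\eps^{-1}R^\eps_{-\infty}\mathcal L^\eps(t),
\end{equation*}
combined with the inhomogeneous Strichartz estimate of Lemma~\ref{lem:stri-inhom-eps}. The plan is to work with a $\dot H^{1/2}$-admissible output pair $(q_1,r_1)$, e.g.\ $(q_1,r_1)=(8,4)$, whose $\dot H^{-1/2}$-admissible dual matched to the cubic via H\"older is $(q_2,r_2)=(8/5,4)$, both lying in the allowed range $3\le r<6$.

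For the source contribution I would use the input pair $(2,3)$ (which is also $\dot H^{-1/2}$-admissible). Combining the $L^{3/2}$-bound $\|\mathcal L^\eps(t)\|_{L^{3/2}}\lesssim \eps^{7/4}/\langle t\rangle^{3/2}$ of Proposition~\ref{prop:est-source} with $\langle t\rangle^{-3/2}\in L^2(\R_t)$ gives $\eps^{-1}\|\mathcal L^\eps\|_{L^2L^{3/2}}\lesssim\eps^{3/4}$, and Strichartz yields $\eps^{1/q_1}\|R^\eps_{-\infty}(\mathcal L^\eps/\eps\cdot i)\|_{L^{q_1}L^{r_1}}\lesssim \eps^{1/4}$, exactly the target rate. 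For the nonlinear term, the pointwise bound $\bigl||\psi^\eps|^2\psi^\eps-|\varphi^\eps|^2\varphi^\eps\bigr|\lesssim (|\psi^\eps|^2+|\varphi^\eps|^2)|w^\eps|$ and H\"older's inequality yield
\begin{equation*}
  \eps^{3/4}\|w^\eps\|_{L^{q_1}L^{r_1}}\lesssim \eps^{3/2}\bigl(\|\psi^\eps\|^2_{L^{q_1}L^{r_1}}+\|\varphi^\eps\|^2_{L^{q_1}L^{r_1}}\bigr)\|w^\eps\|_{L^{q_1}L^{r_1}} + O(\eps^{1/4}).
\end{equation*}
The scaling $\|\varphi^\eps(t)\|_{L^{r_1}}=\eps^{3/(2r_1)-3/4}\|u(t)\|_{L^{r_1}}$, combined with the $\dot H^{1/2}$-admissibility identity $1/q_1+3/(2r_1)=1/2$, produces $\eps^{3/2}\|\varphi^\eps\|^2_{L^{q_1}L^{r_1}}\sim \eps^{3/4}\|u\|^2_{L^{q_1}L^{r_1}}$, exactly at the scale of the left-hand side. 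Since $\|u\|_{L^{q_1}(\R;L^{r_1})}$ is $O(1)$ (by Theorem~\ref{theo:scatt-class} and a suitable interpolation from Proposition~\ref{prop:AC-class}), no contraction is possible globally on $\R$.

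The resolution is to partition $\R$ into finitely many intervals $I_1,\ldots,I_N$ on which $\|u\|_{L^{q_1}(I_j;L^{r_1})}$ is as small as we wish; the number $N$ depends on $u$ but not on $\eps$. On each $I_j$, I would split $R^\eps_{-\infty}=U^\eps_V(\cdot-t_{j-1})(\,\cdot\,)|_{t=t_{j-1}}+R^\eps_{t_{j-1}}$, apply Lemma~\ref{lem:stri-inhom-eps} to the retarded part and absorb the $\varphi^\eps$-term thanks to smallness of $\|u\|_{L^{q_1}(I_j;L^{r_1})}$, while treating the remaining $\|w^\eps\|^3_{L^{q_1}(I_j;L^{r_1})}$ contribution as negligible under the bootstrap hypothesis $\eps^{1/q_1}\|w^\eps\|_{L^{q_1}(I_j;L^{r_1})}\lesssim \eps^{1/4}$. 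This leaves a residual homogeneous term
\begin{equation*}
  \eps^{1/q_1}\bigl\|U^\eps_V(\cdot-t_{j-1})w^\eps(t_{j-1})\bigr\|_{L^{q_1}(I_j;L^{r_1})},
\end{equation*}
to be controlled, and a summation in $j$ that only costs a factor $N^{1/q_1}$ independent of $\eps$.

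The main obstacle is precisely the homogeneous boundary term: $(q_1,r_1)$ is not $L^2$-admissible, so Lemma~\ref{lem:stri-eps} does not apply directly. The plan is to convert it via Sobolev embedding ($L^4(\R^3)\hookleftarrow \dot H^{3/4}$, or equivalently $L^3\hookleftarrow \dot H^{1/2}$) combined with Lemma~\ref{lem:stri-eps} at an appropriate $L^2$-admissible pair, yielding a control of the form $\eps^{1/q_1}\|U^\eps_V(\cdot) g\|_{L^{q_1}L^{r_1}}$ by a semi-classical fractional-Sobolev norm of $g$. Running this in parallel with an inductive $L^\infty L^2$-estimate
\begin{equation*}
  \|w^\eps(t_{j-1})\|_{L^2}\lesssim \sqrt\eps,
\end{equation*}
obtained from the standard $L^2$-Strichartz estimate on $(-\infty,t_{j-1}]$ using the $L^1L^2$-bound $\eps^{-1}\|\mathcal L^\eps\|_{L^1L^2}\lesssim \sqrt\eps$ of Proposition~\ref{prop:est-source}, closes the bootstrap and preserves the $\eps^{1/4}$-rate through all $N$ intervals. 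Finally, once the estimate is established for one admissible pair, Lemma~\ref{lem:stri-inhom-eps} applied with this information on the right-hand side propagates it to every $\dot H^{1/2}$-admissible $(q,r)$.
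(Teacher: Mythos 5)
Your overall setup is right: Duhamel from $t=-\infty$, the scaled inhomogeneous Strichartz estimate of Lemma~\ref{lem:stri-inhom-eps} with the $\dot H^{-1/2}$-admissible pair $(2,3)$ for the source term (giving the $\eps^{1/4}$ target via the $L^{3/2}$-bound of Proposition~\ref{prop:est-source}), the pointwise estimate on the cubic difference, and a partition of $\R_t$ into finitely many $\eps$-independent intervals to absorb the $\varphi^\eps$-contribution. You also correctly diagnose the main obstacle: the homogeneous boundary term $U_V^\eps(\cdot-t_j)w^\eps(t_j)$ must be measured in a $\dot H^{1/2}$-admissible norm, and the $L^2$-Strichartz estimates of Lemma~\ref{lem:stri-eps} do not apply directly.

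Your proposed resolution of that obstacle, however, has a genuine gap. Running an $L^\infty L^2$ induction gives $\|w^\eps(t_j)\|_{L^2}\lesssim\sqrt\eps$, but converting to the $\dot H^{1/2}$-scale costs a derivative, and the relevant semi-classical derivative $\sqrt\eps\nabla$ applied to $w^\eps$ is only $O(1)$: by interpolation one only obtains $\|w^\eps(t_j)\|_{\dot H^{1/2}}\lesssim\eps^{1/4}\cdot\eps^{-1/4}=O(1)$, which after the homogeneous Strichartz estimate and Sobolev embedding fails to reproduce the $\eps^{1/4}$ rate. Moreover, commuting fractional derivatives with $U_V^\eps$ reintroduces exactly the non-commutation difficulty that Section~\ref{sec:vector-field} was designed to handle (and at fractional order, with no analogue of the operator $A$ available). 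The paper avoids the issue entirely: rather than estimate $U_V^\eps(\cdot-t_j)w^\eps(t_j)$ through a homogeneous estimate plus Sobolev, it writes this term, via Duhamel on $(-\infty,t_j]$, as a retarded integral $\int_{-\infty}^{t_j}U_V^\eps(t-s)(\cdots)ds$ with $t$ still free over $\R$ — to which Lemma~\ref{lem:stri-inhom-eps} applies directly, uniformly in $t_j$. The $\mathcal N^\eps(\R)$-norm of the boundary term is then controlled by the same cubic, linear-$\varphi^\eps$, and source contributions already bounded on $(-\infty,t_j]$, and the estimate $\|U_V^\eps(\cdot-t_j)w^\eps(t_j)\|_{\mathcal N^\eps(\R)}=O(\eps^{1/4})$ propagates inductively in $j$ with no fractional regularity needed at all.
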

\begin{proof}
  The reason for considering $\dot H^{1/2}$-admissible pairs is that
  the cubic three-dimensional Schr\"odinger equation is $\dot
  H^{1/2}$-critical; see e.g. \cite{CW90}. The proof of
  Proposition~\ref{prop:w-crit} is then very similar to the proof of
  \cite[Proposition~2.3]{HoRo08}. 

An important tool is the known estimate for the approximate solution
$\varphi^\eps$: we have, in view of the fact
that $u,Bu\in L^\infty L^2$,
\begin{equation}\label{eq:est-a-priori-phi}
  \|\varphi^\eps(t)\|_{L^r(\R^3)}\lesssim
  \(\frac{1}{\<t\>\sqrt\eps}\)^{3\(\frac{1}{2}-\frac{1}{r}\)},\quad
  2\le r\le 6.
\end{equation}
Note that for an $\dot H^{1/2}$ admissible pair, we infer
\begin{equation*}
  \|\varphi^\eps(t)\|_{L^q(\R;L^r(\R^3))}\lesssim
  \eps^{-\frac{3}{2}\(\frac{1}{2}-\frac{1}{r}\)} = \eps^{-\frac{1}{q}-\frac{1}{4}},
\end{equation*}
so Proposition~\ref{prop:w-crit} shows a $\sqrt\eps$ gain
for $w^\eps$ compared to $\varphi^\eps$, which is the order of
magnitude we eventually prove in $L^\infty L^2$, and stated in
Theorem~\ref{theo:cv-unif}.  
Let $0<\eta\ll 1$, and set
\begin{equation*}
  \|w^\eps\|_{\mathcal N^\eps(I)} :=\sup_{{(q,r)\ \dot
      H^{1/2}-\text{admissible}}\atop 3\le r\le
    6-\eta}\eps^{\frac{1}{q}}\|w^\eps\|_{L^q(I;L^r(\R^3)}. 
\end{equation*}
Duhamel's formula for \eqref{eq:restecrit} reads, given $w^\eps_{\mid
  t=-\infty}=0$,
\begin{equation*}
  w^\eps(t) =-i\eps^{3/2} \int_{-\infty}^t
  U_V^\eps(t-s)\(|\psi^\eps|^2\psi^2-|\varphi^\eps|^2\varphi^\eps\)(s)ds
+i\eps^{-1} \int_{-\infty}^t  U_V^\eps(t-s)\mathcal L^\eps(s)ds. 
\end{equation*}
Since we have the point-wise estimate
\begin{equation*}
  \left|
    |\psi^\eps|^2\psi^2-|\varphi^\eps|^2\varphi^\eps\right|\lesssim
  \(|w^\eps|^2+|\varphi^\eps|^2\)|w^\eps|, 
\end{equation*}
Lemma~\ref{lem:stri-inhom-eps} yields, with
$(q_2,r_2)=(\frac{10}{7},5)$ for the first term of the right hand
side, and with $(q_2,r_2)=(2,3)$ for the second term,
\begin{align*}
  \|w^\eps\|_{\mathcal N^\eps(-\infty,t)}&\lesssim \eps^{3/2-7/10}\left\|
    \(|w^\eps|^2+|\varphi^\eps|^2\)w^\eps\right\|_{L^{10/3}_tL^{5/4}}
  + \eps^{-3/2}\|\mathcal L^\eps\|_{L^2_t L^{3/2}}\\
&\lesssim \eps^{4/5}\( \|w^\eps\|_{L^{20}_tL^{10/3}}^2 +
\|\varphi^\eps\|_{L^{20}_tL^{10/3}}^2 \)
\|w^\eps\|_{L^{5}_tL^{5}}
  + \eps^{-3/2}\|\mathcal L^\eps\|_{L^2_t L^{3/2}},
\end{align*}
where we have used H\"older inequality. Note that the pairs
$(20,\frac{10}{3})$ and $(5,5)$ are $\dot H^{1/2}$-admissible. Denote
by 
\begin{equation*}
  \omega(t) =\frac{1}{\<t\>^{3/5}}. 
\end{equation*}
This function obviously belongs to $L^{20}(\R)$. 
The estimate \eqref{eq:est-a-priori-phi} and the definition of the
norm $\mathcal N^\eps$ yield
\begin{equation*}
   \|w^\eps\|_{\mathcal N^\eps(-\infty,t)}\lesssim \sqrt\eps
   \|w^\eps\|_{\mathcal N^\eps(-\infty,t)}^3 + 
\|\omega\|_{L^{20}(-\infty,t)}^2  \|w^\eps\|_{\mathcal N^\eps(-\infty,t)}
  + \eps^{-3/2}\|\mathcal L^\eps\|_{L^2_t L^{3/2}}.
\end{equation*}
Taking $t\ll -1$, we infer
\begin{equation*}
   \|w^\eps\|_{\mathcal N^\eps(-\infty,t)}\lesssim \sqrt\eps
   \|w^\eps\|_{\mathcal N^\eps(-\infty,t)}^3 
  + \eps^{-3/2}\|\mathcal L^\eps\|_{L^2_t L^{3/2}}\lesssim \sqrt\eps
   \|w^\eps\|_{\mathcal N^\eps(-\infty,t)}^3  + \eps^{1/4},
\end{equation*}
where we have use Proposition~\ref{prop:est-source}. We can now use a
standard bootstrap argument, as recalled in Section~\ref{sec:class}.
We infer that for $t_1\ll -1$,
\begin{equation*}
    \|w^\eps\|_{\mathcal N^\eps(-\infty,t_1)}\lesssim \eps^{1/4}.
\end{equation*}
Using Duhamel's formula again, we have
\begin{align*}
    U_V^\eps(t-t_1)w^\eps(t_1) &=-i\eps^{3/2} \int_{-\infty}^{t_1}
  U_V^\eps(t-s)\(|\psi^\eps|^2\psi^2-|\varphi^\eps|^2\varphi^\eps\)(s)ds\\
&+i\eps^{-1} \int_{-\infty}^{t_1}  U_V^\eps(t-s)\mathcal L^\eps(s)ds,
\end{align*}
so we infer
\begin{align*}
    \| U_V^\eps(t-t_1)w^\eps(t_1)\|_{\mathcal N^\eps(\R)}& \lesssim  \sqrt\eps
   \|w^\eps\|_{\mathcal N^\eps(-\infty,t_1)}^3 + 
\|\omega\|_{L^{20}(-\infty,t_1)}^2  \|w^\eps\|_{\mathcal
                                                           N^\eps(-\infty,t_1)}\\
& \quad  + \eps^{-3/2}\|\mathcal L^\eps\|_{L^2((-\infty,t_1]; L^{3/2})}\\
&\le
C_0\eps^{1/4}.
\end{align*}
We now rewrite Duhamel's formula with some initial time $t_j$:
\begin{align*}
  w^\eps(t) &= U_V^\eps(t-t_j)w^\eps(t_j)-i\eps^{3/2} \int_{t_j}^t
  U_V^\eps(t-s)\(|\psi^\eps|^2\psi^2-|\varphi^\eps|^2\varphi^\eps\)(s)ds\\
&\quad
+i\eps^{-1} \int_{t_j}^t  U_V^\eps(t-s)\mathcal L^\eps(s)ds. 
\end{align*}
For $t\ge t_j$ and $I=[t_j,t]$, the same estimates as above yield
\begin{align*}
   \|w^\eps\|_{\mathcal N^\eps(I)}&\le  \|U_V^\eps(\cdot-t_j)w^\eps(t_j)\|_{\mathcal N^\eps(I)}
  + C\sqrt\eps
   \|w^\eps\|_{\mathcal N^\eps(I)}^3 + 
C\|\omega\|_{L^{20}(I)}^2  \|w^\eps\|_{\mathcal N^\eps(I)}\\
&  \quad+ C\eps^{-3/2}\|\mathcal L^\eps\|_{L^2(I; L^{3/2})},
\end{align*}
where the above constant $C$ is independent of $\eps, t_j$ and $t$. We
split $\R_t$ into finitely many intervals
\begin{equation*}
  \R = (-\infty,t_1]\cup \bigcup_{j=1}^N [t_j,t_{j+1}]\cup
[t_N,\infty)=:\bigcup_{j=0}^{N+1} I_j, 
\end{equation*}
 on which 
\begin{equation*}
  C\|\omega\|_{L^{20}(I_j)}^2 \le\frac{1}{2},
\end{equation*}
so that we have
\begin{align*}
   \|w^\eps\|_{\mathcal N^\eps(I_j)}&\le   2\|U_V^\eps(\cdot-t_j)w^\eps(t_j)\|_{\mathcal N^\eps(I_j)}
  +2 C\sqrt\eps
   \|w^\eps\|_{\mathcal N^\eps(I_j)}^3  +
  2 C\eps^{-3/2}\|\mathcal L^\eps\|_{L^2(I_j; L^{3/2})}\\
&\le  2\|U_V^\eps(\cdot-t_j)w^\eps(t_j)\|_{\mathcal N^\eps(I_j)}
  +2 C\sqrt\eps
   \|w^\eps\|_{\mathcal N^\eps(I_j)}^3  + \tilde C
  \eps^{1/4}\left\|\<t\>^{-3/2}\right\|_{L^2(I_j)}, 
\end{align*}
where we have used Proposition~\ref{prop:est-source} again. Since we
have
\begin{equation*}
   \| U_V^\eps(t-t_1)w^\eps(t_1)\|_{\mathcal N^\eps(\R)}\le C_0\eps^{1/4},
\end{equation*}
the bootstrap argument shows that at least for $\eps\le \eps_1$
($\eps_1>0$),
\begin{equation*}
    \|w^\eps\|_{\mathcal N^\eps(I_1)}\le 3
    \|U_V^\eps(\cdot-t_1)w^\eps(t_1)\|_{\mathcal N^\eps(I_1)} + \frac{3}{2} \tilde C
  \eps^{1/4}\left\|\<t\>^{-3/2}\right\|_{L^2(I_1)}.
\end{equation*}
On the other hand, Duhamel's formula implies
\begin{align*}
  U_V^\eps(t-t_{j+1})w^\eps(t_{j+1}) &=
                                       U_V^\eps(t-t_j)w^\eps(t_j)
+i\eps^{-1} \int_{t_j}^{t_{j+1}}  U_V^\eps(t-s)\mathcal L^\eps(s)ds\\
&\quad -i\eps^{3/2}
                                       \int_{t_j}^{t_{j+1}} 
  U_V^\eps(t-s)\(|\psi^\eps|^2\psi^2-|\varphi^\eps|^2\varphi^\eps\)(s)ds
. 
\end{align*}
Therefore, we infer
\begin{align*}
    \| U_V^\eps(t-t_{j+1})w^\eps(t_{j+1})\|_{\mathcal N^\eps(\R)}&\le
    \|U_V^\eps(t-t_{j})w^\eps(t_{j})\|_{\mathcal N^\eps(\R)}+ 
  + C\sqrt\eps
   \|w^\eps\|_{\mathcal N^\eps(I_j)}^3 \\
&  \quad + 
C\|\omega\|_{L^{20}(I_j)}^2  \|w^\eps\|_{\mathcal N^\eps(I_j)}
+ C\eps^{-3/2}\|\mathcal L^\eps\|_{L^2(I_j; L^{3/2})}.
\end{align*}
By induction (carrying over finitely many steps), we conclude
\begin{equation*}
   \|U_V^\eps(t-t_{j})w^\eps(t_{j})\|_{\mathcal N^\eps(\R)}
   =\O\(\eps^{1/4}\),\quad 0\le j\le N+1,
\end{equation*}
and $\|w^\eps\|_{\mathcal N^\eps(\R)}=\O\(\eps^{1/4}\)$ as announced. 
\end{proof}

\subsection{End of the argument}
\label{sec:end-argument}

Resume the estimate \eqref{eq:w-presque} with the $L^2$-admissible
pair $(q_1,r_1)= (\frac{8}{3},4)$:
\begin{equation*}
   \eps^{3/8}\|w^\eps\|_{L^{8/3}_t L^{4}} \lesssim
  \eps^{3/4}\( \|w^\eps\|^2_{L^8_t L^4}+ \|\varphi^\eps\|^2_{L^8_t
    L^4}\) \eps^{3/8}\|w^\eps\|_{L^{8/3}_tL^4}  +
  \frac{1}{\eps}\|\mathcal L^\eps\|_{L^1_tL^2}.
\end{equation*}
From Proposition~\ref{prop:w-crit} (the pair $(8,4)$ is $\dot H^{1/2}$-admissible),
\begin{equation*}
  \|w^\eps\|_{L^8(\R; L^4)}\lesssim \eps^{1/8},
\end{equation*}
and we have seen in the course of the proof that
\begin{equation*}
  \|\varphi^\eps\|_{L^8(\R; L^4)}\lesssim \eps^{-3/8}.
\end{equation*}
Therefore, we can split $\R_t$ into finitely many intervals, in a way
which is independent of $\eps $, so that 
\begin{equation*}
   \eps^{3/4}\( \|w^\eps\|^2_{L^8(I; L^4)}+ \|\varphi^\eps\|^2_{L^8(I;
    L^4)}\)\le \eta 
\end{equation*}
on each of these intervals, with $\eta$ so small that we infer
\begin{equation*}
   \eps^{3/8}\|w^\eps\|_{L^{8/3}(\R; L^{4})} \lesssim
  \frac{1}{\eps}\|\mathcal L^\eps\|_{L^1(\R;L^2)}\lesssim \sqrt\eps,
\end{equation*}
where we have used Proposition~\ref{prop:est-source}. Plugging this
estimate into  \eqref{eq:w-presque} and now taking $(q_1,r_1)$,
Theorem~\ref{theo:cv-unif} follows.

\section{Superposition}
\label{sec:superp}

In this section, we sketch the proof of
Corollary~\ref{cor:decoupling}. This result heavily relies on the
(finite time) superposition 
  principle established in \cite{CaFe11}, in the case of two initial
  coherent states with different centers in phase space. We present
  the argument in the case of two initial wave packets, and explain
  why it can be generalized to any finite number of initial coherent
  states. 
\smallbreak

Following the proof of
\cite[Proposition~1.14]{CaFe11}, we introduce the approximate
evolution of each individual initial wave packet:
\begin{equation*}
   \varphi_j^\eps(t,x)=\eps^{-3/4} u_j
\left(t,\frac{x-q_j(t)}{\sqrt\eps}\right)e^{i\left(S_j(t)+p_j(t)\cdot
    (x-q_j(t))\right)/\eps},
\end{equation*}
where $u_j$ solves \eqref{eq:u} with initial datum $a_j$. In the proof
of \cite[Proposition~1.14]{CaFe11}, the main remark is that all that
is needed is the control of a new source term, corresponding to the
interactions of the approximate solutions. Set 
\begin{equation*}
  w^\eps = \psi^\eps -\varphi_1^\eps-\varphi_2^\eps.
\end{equation*}
It solves 
\begin{equation*}
   i\eps\d_t w^\eps +\frac{\eps^2}{2}\Delta w^\eps = Vw^\eps -\mathcal
  L^\eps+\mathcal N_I^\eps+ \mathcal N_s^\eps\quad ;\quad w^\eps_{\mid t=0}=0,
\end{equation*}
where the linear source term is the same as in Section~\ref{sec:cv}
(except than now we consider the sums of two such terms), $\mathcal
N_s^\eps$ is the semilinear term
\begin{equation*}
  \mathcal N_s^\eps =\eps^{5/2}\( |w^\eps+
  \varphi_1^\eps+\varphi_2^\eps|^2 (w^\eps+
  \varphi_1^\eps+\varphi_2^\eps) - |
  \varphi_1^\eps+\varphi_2^\eps|^2 (
  \varphi_1^\eps+\varphi_2^\eps)\),
\end{equation*}
and $\mathcal N_I^\eps$ is precisely the new interaction term,
\begin{equation*}
   \mathcal N_I^\eps =\eps^{5/2}\( |
  \varphi_1^\eps+\varphi_2^\eps|^2 (
  \varphi_1^\eps+\varphi_2^\eps) - |
  \varphi_1^\eps|^2 \varphi_1^\eps-|
  \varphi_2^\eps|^2  \varphi_2^\eps\).
\end{equation*}
In \cite{CaFe11}, it is proven that if $(q_{01},p_{01})\not
=(q_{02},p_{02})$, then the possible interactions between
$\varphi_1^\eps$ and $\varphi_2^\eps$ are negligible on every finite
time interval, in the sense that
\begin{equation*}
  \frac{1}{\eps} \| \mathcal N_I^\eps\|_{L^1(0,T;L^2)}\le C(T,\gamma) \eps^\gamma,
\end{equation*}
for every $\gamma<1/2$. We infer that
$\|w^\eps\|_{L^\infty(0,T;L^2)}=\O(\eps^\gamma)$ for every $T>0$. For
$t\ge T$, we have
\begin{align*}
   \frac{1}{\eps} \| \mathcal N_I^\eps(t)\|_{L^2}& \lesssim
  \sum_{\ell_1,\ell_2\ge 1,\ \ell_1+\ell_2 =3}\left\|
  u_1^{\ell_1}\(t,y-\frac{q_1(t)-q_2(t)}{\sqrt\eps } \)
  u_2^{\ell_2}(t,y)\right\|_{L^2}\\
&\lesssim
  \sum_{\ell_1,\ell_2\ge 1,\ \ell_1+\ell_2 =3}\|
  u_1(t)\|_{L^\infty}^{\ell_1} 
\| u_2(t)\|_{L^\infty}^{\ell_2-1} 
\|  u_2(t)\|_{L^2}\lesssim \frac{1}{t^3}.
\end{align*}
Similarly, resuming the same estimates as in the proof of
Proposition~\ref{prop:est-source}, 
\begin{equation*}
   \frac{1}{\eps} \| \mathcal N_I^\eps(t)\|_{L^{3/2}}\lesssim
   \frac{\eps^{1/4}}{t^{5/2}}. 
\end{equation*}
By resuming the proof of Theorem~\ref{theo:cv-unif} on the time
interval $[T,\infty)$, we infer
\begin{equation*}
  \|w^\eps\|_{L^\infty(0,\infty;L^2)}\le C(T,\gamma)\eps^\gamma + \frac{C}{T^2}.
\end{equation*}
Therefore, 
\begin{equation*}
  \limsup_{\eps\to 0}  \|w^\eps\|_{L^\infty(0,\infty;L^2)}\lesssim \frac{1}{T^2},
\end{equation*}
for all $T>0$, hence the result by letting $T\to \infty$. 
\smallbreak

In the case of more than two initial coherent states, the idea is that
the nonlinear interaction term, $\mathcal N_I^\eps$, always contains
the product of two approximate solutions corresponding to different
trajectories in phase space. This is enough for the proof of
\cite[Proposition~1.14]{CaFe11} to go through: we always have
\begin{align*}
   \frac{1}{\eps} \| \mathcal N_I^\eps(t)\|_{L^2}& \\
\lesssim
  \sum_{{j\not =k, \ \ell_j,\ell_k\ge 1}\atop {\ell_j+\ell_k+\ell_m =3}}&\left\|
  u_j^{\ell_j}\(t,y-\frac{q_j(t)-q_k(t)}{\sqrt\eps } \)
  u_k^{\ell_k}(t,y)u_m^{\ell_m}\(t,y-\frac{q_m(t)-q_k(t)}{\sqrt\eps } \)\right\|_{L^2}\\
\lesssim
  \sum_{{j\not =k,\  \ell_j,\ell_k\ge 1}\atop {\ell_j+\ell_k+\ell_m
  =3}}&\|u_m(t)\|_{L^\infty}^{\ell_m}\left\| 
  u_j^{\ell_j}\(t,y-\frac{q_j(t)-q_k(t)}{\sqrt\eps } \)
  u_k^{\ell_k}(t,y)\right\|_{L^2},
\end{align*}
so the last factor is exactly the one considered in \cite{CaFe11} and
  above. 
\subsection*{Acknowledgements}
The author is grateful to Jean-Fran\c cois Bony, Clotilde Fermanian,
Isabelle Gallagher and Fabricio Maci\`a for fruitful discussions about
this work. 

\bibliographystyle{siam}

\bibliography{scatt}

\begin{thebibliography}{10}

\bibitem{BG3}
{\sc H.~Bahouri and P.~G{\'e}rard}, {\em High frequency approximation of
  solutions to critical nonlinear wave equations}, Amer. J. Math., 121 (1999),
  pp.~131--175.

\bibitem{BGP99}
{\sc D.~Bambusi, S.~Graffi, and T.~Paul}, {\em Long time semiclassical
  approximation of quantum flows: a proof of the {E}hrenfest time}, Asymptot.
  Anal., 21 (1999), pp.~149--160.

\bibitem{BaRuVe06}
{\sc J.~A. Barcel{\'o}, A.~Ruiz, and L.~Vega}, {\em Some dispersive estimates
  for {S}chr{\"o}dinger equations with repulsive potentials}, J. Funct. Anal.,
  236 (2006), pp.~1--24.

\bibitem{BR02}
{\sc A.~Bouzouina and D.~Robert}, {\em Uniform semiclassical estimates for the
  propagation of quantum observables}, Duke Math. J., 111 (2002), pp.~223--252.

\bibitem{BPST03}
{\sc N.~Burq, F.~Planchon, J.~G. Stalker, and A.~S. Tahvildar-Zadeh}, {\em
  Strichartz estimates for the wave and {S}chr\"odinger equations with the
  inverse-square potential}, J. Funct. Anal., 203 (2003), pp.~519--549.

\bibitem{BPST04}
\leavevmode\vrule height 2pt depth -1.6pt width 23pt, {\em Strichartz estimates
  for the wave and {S}chr\"odinger equations with potentials of critical
  decay}, Indiana Univ. Math. J., 53 (2004), pp.~1665--1680.

\bibitem{CaCuVo09}
{\sc F.~Cardoso, C.~Cuevas, and G.~Vodev}, {\em Dispersive estimates for the
  {S}chr\"odinger equation in dimensions four and five}, Asymptot. Anal., 62
  (2009), pp.~125--145.

\bibitem{CaDCDS}
{\sc R.~Carles}, {\em Global existence results for nonlinear {S}chr\"odinger
  equations with quadratic potentials}, Discrete Contin. Dyn. Syst., 13 (2005),
  pp.~385--398.

\bibitem{Ca11}
\leavevmode\vrule height 2pt depth -1.6pt width 23pt, {\em Nonlinear
  {S}chr{\"o}dinger equation with time dependent potential}, Commun. Math.
  Sci., 9 (2011), pp.~937--964.

\bibitem{CaSi15}
{\sc R.~Carles and J.~Drumond~Silva}, {\em Large time behavior in nonlinear
  {S}chr{\"o}dinger equation with time dependent potential}, Commun. Math.
  Sci., 13 (2015), pp.~443--460.

\bibitem{CaFe11}
{\sc R.~Carles and C.~Fermanian~Kammerer}, {\em Nonlinear coherent states and
  {E}hrenfest time for {S}chr\"odinger equations}, Commun. Math. Phys., 301
  (2011), pp.~443--472.

\bibitem{CaDa-p}
{\sc B.~Cassano and P.~D'Ancona}, {\em Scattering in the energy space for the
  {NLS} with variable coefficients}.
\newblock preprint. Archived at \url{http://arxiv.org/abs/1502.00937}, 2015.

\bibitem{CazCourant}
{\sc T.~Cazenave}, {\em Semilinear {S}chr\"odinger equations}, vol.~10 of
  Courant Lecture Notes in Mathematics, New York University Courant Institute
  of Mathematical Sciences, New York, 2003.

\bibitem{CW90}
{\sc T.~Cazenave and F.~Weissler}, {\em The {C}auchy problem for the critical
  nonlinear {S}chr\"odinger equation in ${H}^s$}, Nonlinear Anal. TMA, 14
  (1990), pp.~807--836.

\bibitem{CW92}
\leavevmode\vrule height 2pt depth -1.6pt width 23pt, {\em Rapidly decaying
  solutions of the nonlinear {S}chr\"odinger equation}, Comm. Math. Phys., 147
  (1992), pp.~75--100.

\bibitem{CKSTTCPAM}
{\sc J.~Colliander, M.~Keel, G.~Staffilani, H.~Takaoka, and T.~Tao}, {\em
  Global existence and scattering for rough solutions of a nonlinear
  {S}chr\"odinger equation on {$\mathbb R\sp 3$}}, Comm. Pure Appl. Math., 57
  (2004), pp.~987--1014.

\bibitem{CoRoBook}
{\sc M.~Combescure and D.~Robert}, {\em Coherent states and applications in
  mathematical physics}, Theoretical and Mathematical Physics, Springer,
  Dordrecht, 2012.

\bibitem{CLSS08}
{\sc R.~Cordero-Soto, R.~M. Lopez, E.~Suazo, and S.~K. Suslov}, {\em Propagator
  of a charged particle with a spin in uniform magnetic and perpendicular
  electric fields}, Lett. Math. Phys., 84 (2008), pp.~159--178.

\bibitem{AFVV10}
{\sc P.~D'Ancona, L.~Fanelli, L.~Vega, and N.~Visciglia}, {\em Endpoint
  {S}trichartz estimates for the magnetic {S}chr\"odinger equation}, J. Funct.
  Anal., 258 (2010), pp.~3227--3240.

\bibitem{DG}
{\sc J.~Derezi\'nski and C.~G\'erard}, {\em Scattering theory of quantum and
  classical {N}-particle systems}, Texts and Monographs in Physics, Springer
  Verlag, Berlin Heidelberg, 1997.

\bibitem{DuHoRo08}
{\sc T.~Duyckaerts, J.~Holmer, and S.~Roudenko}, {\em Scattering for the
  non-radial 3{D} cubic nonlinear {S}chr\"odinger equation}, Math. Res. Lett.,
  15 (2008), pp.~1233--1250.

\bibitem{ErGr10}
{\sc M.~B. Erdo{\u{g}}an and W.~R. Green}, {\em Dispersive estimates for the
  {S}chr\"odinger equation for {$C^{\frac{n-3}{2}}$} potentials in odd
  dimensions}, Int. Math. Res. Not. IMRN,  (2010), pp.~2532--2565.

\bibitem{Feyn}
{\sc R.~P. Feynman and A.~R. Hibbs}, {\em {Quantum mechanics and path integrals
  (International Series in Pure and Applied Physics)}}, {Maidenhead, Berksh.:
  McGraw-Hill Publishing Company, Ltd., 365 p.}, 1965.

\bibitem{FoschiStri}
{\sc D.~Foschi}, {\em Inhomogeneous {S}trichartz estimates}, J. Hyperbolic
  Differ. Equ., 2 (2005), pp.~1--24.

\bibitem{Fujiwara}
{\sc D.~Fujiwara}, {\em Remarks on the convergence of the {F}eynman path
  integrals}, Duke Math. J., 47 (1980), pp.~559--600.

\bibitem{GinibreDEA}
{\sc J.~Ginibre}, {\em Introduction aux \'equations de {S}chr\"odinger non
  lin\'eaires}.
\newblock In French. Cours de DEA, 1995.
\newblock {P}aris {O}nze {\'E}dition. Archived at
  \url{http://sites.mathdoc.fr/PMO/PDF/G_GINIBRE-48.pdf}.

\bibitem{GV79mean}
{\sc J.~Ginibre and G.~Velo}, {\em The classical field limit of scattering
  theory for nonrelativistic many-boson systems. {I}}, Comm. Math. Phys., 66
  (1979), pp.~37--76.

\bibitem{GV79Scatt}
\leavevmode\vrule height 2pt depth -1.6pt width 23pt, {\em On a class of
  nonlinear {S}chr\"odinger equations. {II} {S}cattering theory, general case},
  J. Funct. Anal., 32 (1979), pp.~33--71.

\bibitem{GV85}
\leavevmode\vrule height 2pt depth -1.6pt width 23pt, {\em Scattering theory in
  the energy space for a class of nonlinear {S}chr\"odinger equations}, J.
  Math. Pures Appl. (9), 64 (1985), pp.~363--401.

\bibitem{GiVe10}
\leavevmode\vrule height 2pt depth -1.6pt width 23pt, {\em Quadratic {M}orawetz
  inequalities and asymptotic completeness in the energy space for nonlinear
  {S}chr\"odinger and {H}artree equations}, Quart. Appl. Math., 68 (2010),
  pp.~113--134.

\bibitem{Go06}
{\sc M.~Goldberg}, {\em Dispersive bounds for the three-dimensional
  {S}chr\"odinger equation with almost critical potentials}, Geom. Funct.
  Anal., 16 (2006), pp.~517--536.

\bibitem{GoVeVi06}
{\sc M.~Goldberg, L.~Vega, and N.~Visciglia}, {\em Counterexamples of
  {S}trichartz inequalities for {S}chr\"odinger equations with repulsive
  potentials}, Int. Math. Res. Not.,  (2006), pp.~Art. ID 13927, 16.

\bibitem{Hag80}
{\sc G.~A. Hagedorn}, {\em Semiclassical quantum mechanics. {I}. {T}he {$\hbar
  \rightarrow 0$} limit for coherent states}, Comm. Math. Phys., 71 (1980),
  pp.~77--93.

\bibitem{Hag81}
\leavevmode\vrule height 2pt depth -1.6pt width 23pt, {\em Semiclassical
  quantum mechanics. {III}. {T}he large order asymptotics and more general
  states}, Ann. Physics, 135 (1981), pp.~58--70.

\bibitem{HaJo00}
{\sc G.~A. Hagedorn and A.~Joye}, {\em Exponentially accurate semiclassical
  dynamics: propagation, localization, {E}hrenfest times, scattering, and more
  general states}, Ann. Henri Poincar\'e, 1 (2000), pp.~837--883.

\bibitem{HaJo01}
\leavevmode\vrule height 2pt depth -1.6pt width 23pt, {\em A time-dependent
  {B}orn-{O}ppenheimer approximation with exponentially small error estimates},
  Comm. Math. Phys., 223 (2001), pp.~583--626.

\bibitem{Ha13}
{\sc L.~Hari}, {\em Coherent states for systems of {$L^2$}-supercritical
  nonlinear {S}chr\"odinger equations}, Comm. Partial Differential Equations,
  38 (2013), pp.~529--573.

\bibitem{HoRo08}
{\sc J.~Holmer and S.~Roudenko}, {\em A sharp condition for scattering of the
  radial 3{D} cubic nonlinear {S}chr\"odinger equation}, Comm. Math. Phys., 282
  (2008), pp.~435--467.

\bibitem{HormanderQuad}
{\sc L.~H{\"o}rmander}, {\em Symplectic classification of quadratic forms, and
  general {M}ehler formulas}, Math. Z., 219 (1995), pp.~413--449.

\bibitem{Kat94}
{\sc T.~Kato}, {\em An {$L^{q,r}$}-theory for nonlinear {S}chr\"odinger
  equations}, in Spectral and scattering theory and applications, vol.~23 of
  Adv. Stud. Pure Math., Math. Soc. Japan, Tokyo, 1994, pp.~223--238.

\bibitem{KT}
{\sc M.~Keel and T.~Tao}, {\em Endpoint {S}trichartz estimates}, Amer. J.
  Math., 120 (1998), pp.~955--980.

\bibitem{Keraani01}
{\sc S.~Keraani}, {\em On the defect of compactness for the {S}trichartz
  estimates of the {S}chr\"odinger equations}, J. Diff. Eq., 175 (2001),
  pp.~353--392.

\bibitem{KiViZh09}
{\sc R.~Killip, M.~Visan, and X.~Zhang}, {\em Energy-critical {NLS} with
  quadratic potentials}, Comm. Partial Differential Equations, 34 (2009),
  pp.~1531--1565.

\bibitem{LiSt78}
{\sc J.~E. Lin and W.~A. Strauss}, {\em Decay and scattering of solutions of a
  nonlinear {S}chr\"odinger equation}, J. Funct. Anal., 30 (1978),
  pp.~245--263.

\bibitem{MerleVega98}
{\sc F.~Merle and L.~Vega}, {\em Compactness at blow-up time for ${L}\sp 2$
  solutions of the critical nonlinear {S}chr\"odinger equation in 2{D}},
  Internat. Math. Res. Notices,  (1998), pp.~399--425.

\bibitem{Mourre}
{\sc E.~Mourre}, {\em Absence of singular continuous spectrum for certain
  selfadjoint operators}, Comm. Math. Phys., 78 (1981), pp.~391--408.

\bibitem{NakanishiJFA}
{\sc K.~Nakanishi}, {\em Energy scattering for nonlinear {K}lein-{G}ordon and
  {S}chr\"odinger equations in spatial dimensions $1$ and $2$}, J. Funct.
  Anal., 169 (1999), pp.~201--225.

\bibitem{NierENS}
{\sc F.~Nier}, {\em A semi-classical picture of quantum scattering}, Ann. Sci.
  \'Ecole Norm. Sup. (4), 29 (1996), pp.~149--183.

\bibitem{PlVe09}
{\sc F.~Planchon and L.~Vega}, {\em Bilinear virial identities and
  applications}, Ann. Sci. \'Ec. Norm. Sup\'er. (4), 42 (2009), pp.~261--290.

\bibitem{Rauch91}
{\sc J.~Rauch}, {\em Partial Differential Equations}, vol.~128 of Graduate
  Texts in Math., Springer-Verlag, New York, 1991.

\bibitem{ReedSimon3}
{\sc M.~Reed and B.~Simon}, {\em Methods of modern mathematical physics.
  {III}}, Academic Press [Harcourt Brace Jovanovich, Publishers], New
  York-London, 1979.
\newblock Scattering theory.

\bibitem{RodnianskiSchlag}
{\sc I.~Rodnianski and W.~Schlag}, {\em Time decay for solutions of
  {S}chr\"odinger equations with rough and time-dependent potentials}, Invent.
  Math., 155 (2004), pp.~451--513.

\bibitem{Taylor3}
{\sc M.~Taylor}, {\em Partial differential equations. {III}}, vol.~117 of
  Applied Mathematical Sciences, Springer-Verlag, New York, 1997.
\newblock Nonlinear equations.

\bibitem{TzVi-p}
{\sc N.~Tzvetkov and N.~Visciglia}, {\em Well-posedness and scattering for
  {NLS} on {$\mathbb R^d\times \mathbb T$} in the energy space}.
\newblock preprint. Archived at \url{http://arxiv.org/abs/1409.3938}, 2014.

\bibitem{Vi09}
{\sc N.~Visciglia}, {\em On the decay of solutions to a class of defocusing
  {NLS}}, Math. Res. Lett., 16 (2009), pp.~919--926.

\bibitem{Yajima79}
{\sc K.~Yajima}, {\em The quasiclassical limit of quantum scattering theory},
  Comm. Math. Phys., 69 (1979), pp.~101--129.

\bibitem{Yaj81}
\leavevmode\vrule height 2pt depth -1.6pt width 23pt, {\em The quasiclassical
  limit of quantum scattering theory. {II}. {L}ong-range scattering}, Duke
  Math. J., 48 (1981), pp.~1--22.

\bibitem{ZhZh14}
{\sc J.~Zhang and J.~Zheng}, {\em Scattering theory for nonlinear
  {S}chr\"odinger equations with inverse-square potential}, J. Funct. Anal.,
  267 (2014), pp.~2907--2932.

\end{thebibliography}

\end{document}